\spnewtheorem{algorithm}{Algorithm}{\bf}{\rm}
 \journalname{Journal of Global Optimization}
\begin{document}

\title{Extended reverse-convex programming: an approximate enumeration approach to global optimization
}

\titlerunning{Extended reverse-convex programming}        

\author{Gene A. Bunin 
}


\institute{
              \email{gene.a.bunin@ccapprox.info}   \\
     \emph{Current affiliation: Xinjiang Arts Institute, 734 Tuanjie Road, Urumqi, Xinjiang Uyghur Autonomous Region, People's Republic of China, 830000}  
}

\date{Received: date / Accepted: date}

\maketitle

\begin{abstract}
A new approach to solving a large class of factorable nonlinear programming (NLP) problems to global optimality is presented in this paper. Unlike the traditional strategy of partitioning the decision-variable space employed in many branch-and-bound methods, the proposed approach approximates the NLP problem by a reverse-convex programming (RCP) problem to a controlled precision, with the latter then solved by an enumerative search. To establish the theoretical guarantees of the method, the notion of ``RCP regularity'' is introduced and it is proven that enumeration is guaranteed to yield a global optimum when the RCP problem is regular. An extended RCP algorithmic framework is then presented and its performance is examined for a small set of test problems. 
\keywords{Reverse-convex programming \and Concave programming \and Piecewise-concave approximation \and Factorable programming \and Implicit enumeration methods}
\end{abstract}

\section{Introduction}
\label{intro}

Consider the following nonlinear programming (NLP) problem:

\vspace{-2mm}
\begin{equation}\label{eq:mainprob}
\begin{array}{rl}
\mathop {\rm minimize}\limits_{y} & f_{NL} (y) \\
{\rm{subject}}\;{\rm{to}} & g_{NL,i}(y) \leq 0,\hspace{3mm}i = 1,...,n_{g_{NL}} \\
& h_{NL,i}(y) = 0,\hspace{3mm}i = 1,...,n_{h_{NL}},
\end{array}
\end{equation}

\noindent with $y \in \mathbb{R}^{n_y}$ denoting the decision variables and $f_{NL}, g_{NL,i}, h_{NL,i} : \mathbb{R}^{n_y} \rightarrow \mathbb{R}$ denoting the cost, inequality constraint, and equality constraint functions, respectively. In the present work, we will restrict our attention to \emph{factorable NLP problems} -- i.e., to problems where the functions $f_{NL}$, $g_{NL,i}$, and $h_{NL,i}$ are factorable.

\begin{definition}[Factorable functions]\label{def:factor} Following the definition of Sherali and Wang \cite{Sherali:01}, the function $f : \mathbb{R}^{n_y} \rightarrow \mathbb{R}$ is called \emph{factorable} if it can be written as a sum of $m$ products of univariate functions $\phi_{ij}$:

\vspace{-2mm}
$$
f(y) = \sum_{i=1}^{m} \prod_{j=1}^{n_y} \phi_{ij}(y_j).
$$

\end{definition}

Branch-and-bound methods that intelligently partition the decision-variable space have, over the past half-century, emerged as the dominant technique for solving the general factorable NLP problem (\ref{eq:mainprob}) \cite{Neumaier:04,Neumaier:05}. This is, in part, due to the natural applicability of the approach, as convex relaxations are easily obtained for the univariate components $\phi$ \cite{Falk:69,McCormick:76}, with strong relaxations for certain multivariate functions also available \cite{Zamora:99,Floudas:05}. Additionally, a number of algorithmic advances -- notably, the domain reduction techniques used by solvers such as BARON \cite{Ryoo:96,Tawarmalani:04,Caprara:10} -- and general improvements in computational power have made the resulting solvers viable for an increasingly greater number of practical problems. However, even with these successes such methods can fall to the \emph{curse of dimensionality} due to their need to continuously partition the decision-variable space, which becomes difficult to do efficiently with an increasing number of variables.

While there is no apparent way to ``break'' the curse, it is important to be aware that the fundamental nature of the curse may differ depending on the nature of the algorithm and the optimization problem. Notable examples include mixed integer \cite{Nowak:05} and concave minimization \cite{McKeown:76,Pardalos:86,Horst:95} problems, for which the potential solutions may be enumerated and then compared to obtain a global optimum. Although obtaining a single solution candidate is often computationally cheap, the difficulty in such methods is generally due to the number of candidates becoming unacceptably large. A common motif in such problems is that an optimum must lie at the intersection of $n_y$ constraints, which may be integral constraints or linear inequalities. Consequently, the number of candidates to check reaches -- if one lets $n_c$ denote the total number of constraints and supposes the worst case -- the binomial coefficient $\binom{n_c}{n_y}$. Clearly, the curse of dimensionality in this case is due not only to $n_y$ but rather to both $n_y$ and $n_c$. Of interest is the observation that for problems where the gap between $n_y$ and $n_c$ is innately bounded, or can be made bounded, the worst-case computational effort is bounded as $O(n_y^{n_c-n_y})$: e.g., $\binom{n_c}{n_y} = n_y+1$ when $n_c = n_y+1$ and $\binom{n_c}{n_y} = 0.5(n_y+2)(n_y+1)$ when $n_c = n_y+2$. The natural conclusion is that enumeration techniques may scale and perform better for certain problems than methods relying on the partitioning of the decision-variable space.

The main contribution of this work consists in proposing a framework where the factorable NLP problem is approximated by a problem for which enumeration may be applied. Namely, one approximates (\ref{eq:mainprob}) by the reverse-convex programming (RCP) problem

\vspace{-2mm}
\begin{equation}\label{eq:RCPprob}
\begin{array}{rl}
\mathop {\rm minimize}\limits_{x} & c^T x \\
{\rm{subject}}\;{\rm{to}} & g_i(x) \leq 0,\hspace{3mm}i = 1,...,n_g \\
 & Cx = d,
\end{array}
\end{equation} 

\noindent where $x \in \mathbb{R}^{n}$ denotes the vector of variables, $c \in \mathbb{R}^{n}$ the cost vector, $g_i : \mathbb{R}^{n} \rightarrow \mathbb{R}$ a set of $n_g$ \emph{concave} (``reverse-convex'') constraint functions, and $C \in \mathbb{R}^{n_C \times n}$, $d \in \mathbb{R}^{n_C}$ the matrix and vector defining the linear equality constraints. The formulation in (\ref{eq:RCPprob}) will be referred to as the ``standard'' RCP form. The choice to use a linear cost function is merely a preference that allows one to lump all of the nonlinearity into the inequality constraints, equivalent formulations with a general concave cost also being possible \cite{Ueing:72,Hillestad:80}.

The key message that this paper aims to convey is thus the following:

\begin{quote}
\emph{By solving the RCP approximation (\ref{eq:RCPprob}), one may solve the general factorable NLP problem (\ref{eq:mainprob}) by an enumerative method for which the curse of dimensionality is \emph{different} than for schemes that branch directly on the decision-variable space.}
\end{quote}

\noindent So as to avoid misleading the reader, it must be noted that ``different'' does not imply ``better''. However, the proposed approach may be seen as a potential alternative for those problems where branching on the decision-variable space proves inefficient. Because the RCP formulation is used as a tool for solving problems for which it was not originally intended, the term \emph{extended reverse-convex programming} is used.

In presenting the extended RCP framework, the quality of the approximation

\vspace{-2mm}
$$
\begin{array}{rl}
\mathop {\rm minimize}\limits_{y} & f_{NL} (y) \\
{\rm{subject}}\;{\rm{to}} & g_{NL,i}(y) \leq 0,\hspace{3mm}i = 1,...,n_{g_{NL}} \\
& h_{NL,i}(y) = 0,\hspace{3mm}i = 1,...,n_{h_{NL}}
\end{array} \;\;\; \approx \;\;\;
\begin{array}{rl}
\mathop {\rm minimize}\limits_{x} & c^T x \\
{\rm{subject}}\;{\rm{to}} & g_i(x) \leq 0,\hspace{3mm}i = 1,...,n_g \\
 & Cx = d
\end{array}
$$

\noindent is addressed first in Section 2, and it is proven that this approximation may be arbitrarily good provided that:

\begin{enumerate}
{\setlength\itemindent{.65cm} \item[A1.] The functions $\phi$ obtained in the decomposition of $f_{NL}$, $g_{NL,i}$, and $h_{NL,i}$ are Lipschitz-continuous over any finite interval.}
{\setlength\itemindent{.65cm} \item[A2.] The feasible domain of (\ref{eq:mainprob}), 

\vspace{-2mm}
$$\mathcal{Y} = \{ y : g_{NL,i} (y) \leq 0, \; i = 1,...,n_{g_{NL}}; \; h_{NL,i} (y) = 0, \; i = 1,...,n_{h_{NL}} \},$$ 

\noindent is bounded.}
\end{enumerate}

\noindent An algorithm to obtain an arbitrarily good approximation is also provided. 

The properties of global solutions of (\ref{eq:RCPprob}) are then discussed in Section 3. Letting $\mathcal{X} = \{ x : g_i (x) \leq 0, \; i = 1,...,n_g; \; Cx = d \}$ denote the feasible domain of (\ref{eq:RCPprob}), the following assumptions are imposed for the case when $\mathcal{X} \neq \varnothing$:

\begin{enumerate}
{\setlength\itemindent{.65cm} \item[B1.] Each $g_i$ is concave and continuously differentiable over an open set containing $\mathcal{X}$.}
{\setlength\itemindent{.65cm} \item[B2.] The rank of $C$ is $n_C$, with $n_C < n$.}
{\setlength\itemindent{.65cm} \item[B3.] $c^T x$ attains its global minimum at $x^* \in \mathcal{X}$.}
{\setlength\itemindent{.65cm} \item[B4.] The linear independence constraint qualification (LICQ) holds at $x^*$.}
\end{enumerate}

\noindent Generalizing previously reported results \cite{Hillestad:80}, the notion of ``RCP regularity'' is introduced so that an optimal solution $x^*$ to any regular RCP problem can be obtained by solving the convex problem

\vspace{-2mm}
\begin{equation}\label{eq:RCPprobrev}
\begin{array}{rl}
x^* = {\rm arg} \; \mathop {\rm maximize}\limits_{x} & c^T x \\
{\rm{subject}}\;{\rm{to}} & g_i(x) \geq 0,\hspace{3mm}\forall i \in i_{A^*} \\
 & Cx = d,
\end{array}
\end{equation} 

\noindent where $i_{A^*}$ is the index set of $n-n_C$ constraints that are active at $x^*$. From this crucial property follows the familiar combinatorial motif, as one may find $x^*$ by enumerating all of the $\binom{n_g}{n-n_C}$ possible active sets and solving the problem 

\begin{equation}\label{eq:RCPprobrevgen}
\begin{array}{rl}
\mathop {\rm maximize}\limits_{x} & c^T x \\
{\rm{subject}}\;{\rm{to}} & g_i(x) \geq 0,\hspace{3mm}\forall i \in i_{A} \\
 & Cx = d
\end{array}
\end{equation} 

\noindent for each candidate $i_A$.

Finally, Section 4 proposes an extended RCP algorithm that performs better than the brute enumeration of the $\binom{n_g}{n-n_C}$ possibilities. At its core, this algorithm is similar to that proposed by Ueing \cite{Ueing:72} in that it builds up to the set of $n-n_C$ active constraints by first constructing their subsets in a familiar branching-and-fathoming manner. An algorithmic contribution of this work consists in adding a number of elements to speed up this scheme. This comes in the form of several fathoming techniques that quickly eliminate active-set candidates that cannot occur at $x^*$. Some of these techniques are original in the sense that they make use of the extended RCP framework explicitly in fathoming certain sets. Others, such as the powerful technique of domain reduction \cite{Tawarmalani:04,Caprara:10}, are already well established but are nevertheless original in how they are applied here. The strengths and drawbacks of the algorithm are then demonstrated for a small set of test problems in Section~5, with general reflections and an outline of future work concluding the paper in Section~6.

Some notes with regard to notation and terminology:

\begin{itemize}
\item All vectors are, unless otherwise stated, column vectors.
\item Given a vector $x$, $x_i$ will refer to its $i^{\rm th}$ element. Given a matrix $X$, $X_{i.}$ will refer to its $i^{\rm th}$ row. If the vector or matrix already has a subscript (e.g., $x_a$ or $X_a$), a comma will be used to separate the index -- i.e., $x_{a,i}$ denoting the $i^{\rm th}$ element of $x_a$ and $X_{a,i.}$ the $i^{\rm th}$ row of $X_a$. The notation $X_{ij}$ will refer to the element of $X$ corresponding to the $i^{\rm th}$ row and the $j^{\rm th}$ column, with parentheses used to avoid ambiguity when needed -- e.g., $X_{i(j+1)}$ denoting the element of $X$ corresponding to the $i^{\rm th}$ row and the $({j+1})^{\rm th}$ column.
\item The matrix $I_{n}$ will denote an $n \times n$ identity matrix. The dagger symbol, $\dagger$, will denote the Moore-Penrose pseudoinverse.
\item The symbol $\subseteq_B$ will be used to indicate that a given binary vector is a member of another binary vector in the sense that all of the elements that are equal to unity in the former are also so in the latter, e.g., $[0\;1\;1\;0\;0] \subseteq_B [0\;1\;1\;1\;0]$, but $[0\;1\;1\;0\;0] \not \subseteq_B [0\;1\;0\;1\;0]$.
\item The symbol \# will be employed to denote the cardinality of a set.
\item The matricial ``max norm'', $\| \cdot \|_{\rm max}$, will be used to denote the maximum of the absolute values of the matrix elements, i.e., $\displaystyle \| X \|_{\rm max} = \mathop {\max}_{i,j} | X_{ij} |$.
\item The adjectives ``reverse-convex'' and ``concave'' are identical and will be used interchangeably throughout the text.
\end{itemize}

\section{The RCP Approximation}
\label{sec:pwapp}

The idea of approximating the general NLP (\ref{eq:mainprob}) by the RCP problem (\ref{eq:RCPprob}) has its roots in the works of Zangwill \cite{Zangwill:67} and Rozvany \cite{Rozvany:67,Rozvany:70,Rozvany:71}. Both authors mention the possibility of approximating a more general function by a \emph{piecewise-concave function}, which may in turn be easily decomposed into a set of reverse-convex (concave) ones. The procedure outlined here is essentially a two-step process consisting of (i) obtaining a factored decomposition of (\ref{eq:mainprob}) and (ii) approximating the nonlinear components of the decomposition by a set of concave functions.

\subsection{The Factored Decomposition of the NLP Problem (\ref{eq:mainprob})} 

The decomposition outlined here largely follows that already discussed in the literature \cite{Tawarmalani:04,Nowak:05}. Namely, it is shown that (\ref{eq:mainprob}) may be decomposed into an equivalent problem with a linear cost, linear equalities, and a finite number of inequality constraints whose nonlinear elements are either univariate or bilinear. The key result presented next establishes equivalence for a single factorable constraint.

\begin{lemma}[Decomposing a factorable constraint]\label{lemma:decomp} Consider the constraint $f(y) \leq 0$, where the function $f$ is factorable as per Definition \ref{def:factor}. This constraint may be replaced by an equivalent set of $m (4n_y-3)$ inequalities and $m$ linear equalities. The nonlinear terms will be univariate in $2m n_y$ of the inequality constraints and will be bilinear in the rest. 
\end{lemma}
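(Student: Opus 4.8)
\noindent\emph{Proof proposal.} The plan is to build the equivalent system explicitly by the familiar device of attaching one auxiliary variable to each elementary operation in $f(y)=\sum_{i=1}^{m}\prod_{j=1}^{n_y}\phi_{ij}(y_j)$ (Definition~\ref{def:factor}), to check equivalence once the auxiliary variables are projected out, and then to tally the constraints. First I would handle the univariate pieces: for every pair $(i,j)$ introduce a variable $v_{ij}$ and encode $v_{ij}=\phi_{ij}(y_j)$ as the two inequalities $v_{ij}-\phi_{ij}(y_j)\le 0$ and $\phi_{ij}(y_j)-v_{ij}\le 0$, giving $2mn_y$ inequalities each carrying a single univariate nonlinear term -- this already matches the ``$2mn_y$'' in the statement. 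Then I would flatten each $n_y$-fold product into a chain of bilinear products: for each $i$ take a running product $w_{i,1}=v_{i,1}$ and, for $k=2,\dots,n_y$, a variable $w_{i,k}$ with $w_{i,k}=w_{i,k-1}v_{i,k}$, so that $w_{i,n_y}$ is the $i$-th term and the original constraint becomes $\sum_{i=1}^{m}w_{i,n_y}\le 0$.

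Splitting every bilinear equation two-sidedly would cost $2m(n_y-1)$ bilinear inequalities; the reduction to the stated $2mn_y-3m$ comes from noticing that only the \emph{last} link of each chain need be one-sided. Since $w_{i,n_y}$ appears only in $\sum_i w_{i,n_y}\le 0$ and with a fixed $+1$ coefficient, keeping just $w_{i,n_y}\ge w_{i,n_y-1}v_{i,n_y}$ loses nothing, whereas an earlier link $w_{i,k}$ with $k<n_y$ later enters $w_{i,k+1}=w_{i,k}v_{i,k+1}$ and cannot be relaxed because the sign of $v_{i,k+1}$ is unknown. The chains also supply the linear linking relations, one per term, and once the summed inequality is folded into the last link one is left with $m$ linear equalities; adding up, $2mn_y+(2mn_y-3m)=m(4n_y-3)$ inequalities plus $m$ linear equalities, with the nonlinear terms univariate in the first block and bilinear in the rest. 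The exact split between the linear equalities and the surviving summed inequality is a minor bookkeeping matter I would pin down at the end.

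Equivalence is then checked in two directions. Backward: given $y$ with $f(y)\le 0$, set $v_{ij}=\phi_{ij}(y_j)$ and $w_{i,k}=\prod_{j\le k}v_{ij}$, so all equations -- and the retained one-sided inequalities -- hold with equality and $\sum_i w_{i,n_y}=f(y)\le 0$. Forward: the two-sided equations pin $v_{ij}=\phi_{ij}(y_j)$ and $w_{i,k}=\prod_{j\le k}\phi_{ij}(y_j)$ for $k\le n_y-1$, so $w_{i,n_y}\ge w_{i,n_y-1}v_{i,n_y}=\prod_{j\le n_y}\phi_{ij}(y_j)$ and hence $f(y)=\sum_i\prod_{j\le n_y}\phi_{ij}(y_j)\le\sum_i w_{i,n_y}\le 0$. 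The one step that needs genuine care -- and the part I expect to be the main obstacle -- is this one-sided relaxation: showing that relaxing exactly those $m$ equations, and nothing else, leaves the feasible set unchanged after projection onto $y$, the sign/monotonicity bookkeeping above being precisely what makes it go through. The degenerate cases $n_y=1$ (no product links, hence no bilinear terms) and $m=1$ warrant a line apiece, after which the argument is routine.
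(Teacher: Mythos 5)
Your construction is essentially the paper's own: your $v_{ij}$ are its $Z_b$, your running products $w_{i,k}$ are its $Z_c$ (built left-to-right instead of right-to-left), the univariate and intermediate bilinear equalities are split two-sidedly, the final product bound is kept one-sided, and one linear base relation per term supplies the $m$ linear equalities. The bookkeeping you defer is resolved exactly as the paper does it -- it introduces $m-1$ epigraph variables $z_{a,i}$ so that the summed inequality is absorbed into the first term's bilinear constraint (equivalently, fold $\sum_i w_{i,n_y}\le 0$ into the $i=1$ last link and drop $w_{1,n_y}$), which yields precisely $m(4n_y-3)$ inequalities, $2mn_y$ of them univariate, and $m$ linear equalities.
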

\begin{proof} Employing Definition \ref{def:factor}, the constraint is first rewritten as

\vspace{-2mm}
\begin{equation}\label{eq:fproof1}
\sum_{i=1}^{m} \prod_{j=1}^{n_y} \phi_{ij}(y_j) \leq 0.
\end{equation}

\noindent Introducing $m-1$ auxiliary variables, denoted by $z_{a,1},...,z_{a,m-1}$, and employing the epigraph transformation yields the constraint set

\vspace{-2mm}
\begin{equation}\label{eq:fproof2}
\begin{array}{l}
\displaystyle \prod_{j=1}^{n_y} \phi_{1j}(y_j) + \sum_{i=1}^{m-1} z_{a,i} \leq 0 \vspace{1mm} \\
\displaystyle \prod_{j=1}^{n_y} \phi_{ij}(y_j) - z_{a,i-1} \leq 0, \;\; i = 2,...,m,
\end{array}
\end{equation}

\noindent which is equivalent to (\ref{eq:fproof1}) as any $y, z_a$ satisfying (\ref{eq:fproof2}) implies that (\ref{eq:fproof1}) is satisfied, and as for any $y$ satisfying (\ref{eq:fproof1}) there always exists a choice of $z_a$ (namely, $z_{a_,i-1} = \prod_{j=1}^{n_y} \phi_{ij}(y_j), \; i = 2,...,m$) that leads to the satisfaction of (\ref{eq:fproof2}).

Consider the matrix of auxiliary variables $Z_b \in \mathbb{R}^{m \times n_y}$, with the restriction that $\phi_{ij}(y_j) - Z_{b,ij} = 0$. The set (\ref{eq:fproof2}) may then be replaced by the equivalent set

\vspace{-2mm}
\begin{equation}\label{eq:fproof3}
\begin{array}{l}
\displaystyle \prod_{j=1}^{n_y} Z_{b,1j} + \sum_{i=1}^{m-1} z_{a,i} \leq 0 \vspace{1mm} \\
\displaystyle \prod_{j=1}^{n_y} Z_{b,ij} - z_{a,i-1} \leq 0, \;\; i = 2,...,m \vspace{1mm} \\
\displaystyle \phi_{ij}(y_j) - Z_{b,ij} = 0, \;\; i = 1,...,m;\; j = 1,...,n_y.
\end{array}
\end{equation}

Finally, following the introduction of the auxiliary-variable matrix $Z_c \in \mathbb{R}^{m \times (n_y-1)}$, the product term $\prod_{j=1}^{n_y} Z_{b,ij}$ may be decomposed by recursive substitution as follows:

\vspace{-2mm}
$$
\begin{array}{l}
\displaystyle \prod_{j=1}^{n_y} Z_{b,ij} = Z_{b,i1} Z_{c,i1} \vspace{1mm} \\
Z_{c,ij} - Z_{b,i(j+1)} Z_{c,i(j+1)} = 0, \;\; j = 1,...,n_y-2 \vspace{1mm} \\
Z_{c,i(n_y-1)} - Z_{b,in_y} = 0,
\end{array}
$$

\noindent thereby allowing for (\ref{eq:fproof3}) to be replaced by the equivalent set

\vspace{-2mm}
$$
\begin{array}{l}
\displaystyle Z_{b,11} Z_{c,11} + \sum_{i=1}^{m-1} z_{a,i} \leq 0 \vspace{1mm} \\
\displaystyle Z_{b,i1} Z_{c,i1} - z_{a,i-1} \leq 0, \;\; i = 2,...,m \vspace{1mm} \\
\displaystyle Z_{c,ij} - Z_{b,i(j+1)} Z_{c,i(j+1)} = 0, \;\; i = 1,...,m;\; j = 1,...,n_y-2   \vspace{1mm} \\
Z_{c,i(n_y-1)} -  Z_{b,in_y} = 0, \;\; i = 1,...,m \vspace{1mm} \\
\displaystyle \phi_{ij}(y_j) - Z_{b,ij} = 0, \;\; i = 1,...,m;\; j = 1,...,n_y.
\end{array}
$$

Breaking the nonlinear equalities then yields

\vspace{-2mm}
\begin{equation}\label{eq:decompcon}
\begin{array}{l}
\displaystyle Z_{b,11} Z_{c,11} + \sum_{i=1}^{m-1} z_{a,i} \leq 0 \vspace{1mm} \\
\displaystyle Z_{b,i1} Z_{c,i1} - z_{a,i-1} \leq 0, \;\; i = 2,...,m \vspace{1mm} \\
\displaystyle Z_{c,ij} - Z_{b,i(j+1)} Z_{c,i(j+1)} \leq 0, \;\; i = 1,...,m;\; j = 1,...,n_y-2   \vspace{1mm} \\
\displaystyle Z_{b,i(j+1)} Z_{c,i(j+1)} - Z_{c,ij} \leq 0, \;\; i = 1,...,m;\; j = 1,...,n_y-2   \vspace{1mm} \\
Z_{c,i(n_y-1)} -  Z_{b,in_y} = 0, \;\; i = 1,...,m \vspace{1mm} \\
\displaystyle \phi_{ij}(y_j) - Z_{b,ij} \leq 0, \;\; i = 1,...,m;\; j = 1,...,n_y \vspace{1mm} \\
\displaystyle -\phi_{ij}(y_j) + Z_{b,ij} \leq 0, \;\; i = 1,...,m;\; j = 1,...,n_y,
\end{array}
\end{equation}

\noindent with the final $2mn_y$ constraints nonlinear only in the univariate functions $\phi$, and the other inequality constraints nonlinear only in the bilinear terms. \qed

\end{proof}

The decomposition of the entire NLP problem (\ref{eq:mainprob}) follows readily.

\begin{corollary}[Factored decomposition of the NLP problem (\ref{eq:mainprob})]\label{cor:NLPfactor} The factorable NLP problem (\ref{eq:mainprob}) may be decomposed into an equivalent problem with a linear cost and a feasible set defined by a finite number of linear equality and nonlinear inequality constraints. The nonlinear terms in the inequalities are either univariate or bilinear.
\end{corollary}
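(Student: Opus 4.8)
The plan is to reduce everything to repeated application of Lemma~\ref{lemma:decomp}, handling the cost and the equality constraints by standard reformulations first. The one genuinely new point to verify is that each object fed into Lemma~\ref{lemma:decomp} is itself factorable in the (possibly enlarged) variable space.

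First I would deal with the cost. Introducing a scalar auxiliary variable $t$ and applying the epigraph transformation replaces $\mathop{\rm minimize}_y f_{NL}(y)$ by $\mathop{\rm minimize}_{y,t} t$ together with the added constraint $f_{NL}(y) - t \leq 0$; this is equivalent in the usual sense. Since $f_{NL}(y) = \sum_{i=1}^m \prod_{j=1}^{n_y} \phi_{ij}(y_j)$ is factorable and $-t$ is a one-term factorable function of the new variable, the function $f_{NL}(y) - t$ is factorable over $\mathbb{R}^{n_y+1}$ (append a constant univariate factor in $t$ to each existing product term, and add one product term whose only non-constant factor is $-t$). Next I would turn each equality $h_{NL,i}(y) = 0$ into the pair $h_{NL,i}(y) \leq 0$ and $-h_{NL,i}(y) \leq 0$, which is again an exact reformulation; factorability of $h_{NL,i}$ implies factorability of $-h_{NL,i}$ (negate one univariate factor $\phi_{i1}$ in each product term). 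At this stage the problem has a linear cost, no equality constraints of the original type, and a finite list of factorable inequality constraints: the $n_{g_{NL}}$ original inequalities, the $2 n_{h_{NL}}$ inequalities coming from the equalities, and the one cost-epigraph inequality.

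Then I would apply Lemma~\ref{lemma:decomp} to each of these finitely many factorable inequalities in turn, each time introducing its own fresh batch of auxiliary variables (the $z_a$, $Z_b$, $Z_c$ blocks of the lemma) so that the reformulations of different constraints do not interact. By the lemma, each factorable inequality is replaced by an equivalent finite collection of linear equalities and of inequalities whose nonlinear terms are either univariate or bilinear. Taking the union over all constraints, and recalling that a finite union of finite sets is finite, yields an equivalent problem with linear cost $t$, a finite number of linear equality constraints, and a finite number of inequality constraints nonlinear only in univariate or bilinear terms. Equivalence of the whole problem follows by composing the individual equivalences -- the epigraph step, the equality-splitting step, and each invocation of Lemma~\ref{lemma:decomp} -- and, because the auxiliary-variable sets are pairwise disjoint, the constraint reformulations compose without conflict, so feasible points of the original problem lift to feasible points of the decomposed one with the same cost value, and conversely.

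The main obstacle here is bookkeeping rather than mathematics: one must check carefully that the functions passed to Lemma~\ref{lemma:decomp} -- in particular $f_{NL}(y)-t$ and each $-h_{NL,i}(y)$ -- remain in the factorable form of Definition~\ref{def:factor} over the enlarged variable vector, and that the auxiliary variables introduced for distinct constraints are kept distinct, so that the claimed equivalence is genuinely an equivalence of optimization problems and not merely of individual constraint sets. Once this is granted, the statement is an immediate corollary of Lemma~\ref{lemma:decomp}.
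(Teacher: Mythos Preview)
Your proposal is correct and follows essentially the same route as the paper: epigraph transformation for the cost, splitting of the equalities, and then Lemma~\ref{lemma:decomp} applied to each resulting factorable inequality with disjoint auxiliary variables. The only cosmetic difference is that you explicitly recast $f_{NL}(y)-t$ as a factorable function over $\mathbb{R}^{n_y+1}$, whereas the paper simply notes that the $-t$ term can be appended to the first inequality of the decomposition in (\ref{eq:decompcon}) without affecting the derivation of Lemma~\ref{lemma:decomp}.
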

\begin{proof}
By the epigraph transformation \cite{McCormick:76} and the splitting of equality constraints, one first obtains the equivalence

\vspace{-2mm}
\begin{equation}\label{eq:mainprobeq}
\begin{array}{l}
\begin{array}{rl}
\mathop {\rm minimize}\limits_{y} & f_{NL} (y) \\
{\rm{subject}}\;{\rm{to}} & g_{NL,i}(y) \leq 0,\hspace{3mm}i = 1,...,n_{g_{NL}} \\
& h_{NL,i}(y) = 0,\hspace{3mm}i = 1,...,n_{h_{NL}}
\end{array} \vspace{2mm} \\
\hspace{25mm} \Leftrightarrow \;\;\;
\begin{array}{rl}
\mathop {\rm minimize}\limits_{y,t} & t \\
{\rm{subject}}\;{\rm{to}} & f_{NL} (y) - t \leq 0 \\
& g_{NL,i}(y) \leq 0,\hspace{3mm}i = 1,...,n_{g_{NL}} \\
& h_{NL,i}(y) \leq 0,\hspace{3mm}i = 1,...,n_{h_{NL}} \\
& -h_{NL,i}(y) \leq 0,\hspace{3mm}i = 1,...,n_{h_{NL}},
\end{array}
\end{array}
\end{equation}

\noindent Since the functions $f_{NL} (y)$, $g_{NL,i}(y)$, $h_{NL,i}(y)$, and $-h_{NL,i}(y)$ are all factorable, the result of Lemma \ref{lemma:decomp} may be exploited for each. Note that the addition of the epigraph variable, $t$, does not affect the derivation or the validity of Lemma 1 -- i.e., one simply replaces $Z_{b,11} Z_{c,11} + \sum_{i=1}^{m-1} z_{a,i} \leq 0$ by $Z_{b,11} Z_{c,11} + \sum_{i=1}^{m-1} z_{a,i} - t \leq 0$ in (\ref{eq:decompcon}). \qed
\end{proof}

\subsection{Approximation by the Piecewise-Concave Function}

Having decomposed the original NLP problem into an equivalent problem with a linear cost function and constraint functions whose nonlinearity only appears via univariate and bilinear terms, it is now possible to approximate this problem by a reverse-convex one by obtaining reverse-convex (concave) approximations of the univariate and bilinear elements. To make the link between the two problems more explicit, let $x = (y,t,z_{all})$, where $z_{all}$ is the vector of all the auxiliary variables ($z_a$, $Z_b$, $Z_c$) added during the decomposition of \emph{all} of the NLP problem elements.

The key tool for carrying out the approximation is the piecewise-concave function, defined by Zangwill \cite{Zangwill:67} as

\vspace{-2mm}
$$
p(x) = \mathop {\max} \limits_{i = 1,...,n_p} p_i (x),
$$

\noindent with the restriction that all $p_i : \mathbb{R}^{n} \rightarrow \mathbb{R}$ are concave.

The ability of the piecewise-concave function to act as an arbitrarily good approximation of several commonly encountered functions was discussed and proven in a supplement to the present work \cite{Bunin:2014}. The following two results will be needed here.

\begin{lemma}[Piecewise-concave approximation of a Lipschitz-continuous univariate function over a bounded interval]\label{lemma:pwappuni} For a given univariate function $\phi$ of variable $x_i$ and for $\epsilon_p > 0$, there exists a piecewise-concave approximation $p : \mathbb{R} \rightarrow \mathbb{R}$ such that

\vspace{-2mm}
\begin{equation}\label{eq:pwapp}
\mathop {\max}_{x_i \in [\underline x_i, \overline x_i]} | p(x_i) - \phi (x_i) | \leq \epsilon_p,
\end{equation}

\noindent provided that $\phi$ is Lipschitz-continuous on the interval $[\underline x_i, \overline x_i]$:

\vspace{-2mm}
$$
| \phi (x_{i}^a) - \phi (x_{i}^b) | < \kappa | x_{i}^a - x_{i}^b |, \;\; \forall x_{i}^a, x_{i}^b \in [\underline x_i, \overline x_i] \;\; (x_{i}^a \neq x_{i}^b),
$$

\noindent with $\kappa < \infty$ denoting the Lipschitz constant of $\phi$.
\end{lemma}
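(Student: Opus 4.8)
The plan is to construct the piecewise-concave approximation $p$ explicitly as the upper envelope of a finite family of affine functions (which are trivially concave), obtained by a piecewise-linear interpolation of $\phi$ on a sufficiently fine grid of the interval $[\underline{x}_i, \overline{x}_i]$. Specifically, I would partition $[\underline{x}_i, \overline{x}_i]$ into $N$ subintervals of equal width $\delta = (\overline{x}_i - \underline{x}_i)/N$ with nodes $x_i^{(0)} < x_i^{(1)} < \cdots < x_i^{(N)}$, and on each subinterval $[x_i^{(k)}, x_i^{(k+1)}]$ let $\ell_k$ be the affine function agreeing with $\phi$ at the two endpoints. The natural candidate is then $p(x_i) = \max_{k=0,\ldots,N-1} \ell_k(x_i)$. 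However, the max of the interpolating secants need not stay close to $\phi$ — it overshoots wherever $\phi$ is locally convex — so the actual construction I would use is to shift each secant \emph{downward} by a controlled amount, or, more cleanly, to build $p$ from affine pieces that are \emph{lower} estimates by the right margin and argued to still approximate from both sides. The concrete device: on subinterval $k$ define $\ell_k$ to pass through the points $(x_i^{(k)}, \phi(x_i^{(k)}) - \kappa\delta)$ and $(x_i^{(k+1)}, \phi(x_i^{(k+1)}) - \kappa\delta)$ — i.e. the secant lowered by $\kappa\delta$ — and set $p = \max_k \ell_k$.

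The key steps, in order, are: (1) fix $\epsilon_p > 0$ and choose $N$ (hence $\delta$) large enough that $2\kappa\delta \le \epsilon_p$, i.e. $N \ge 2\kappa(\overline{x}_i - \underline{x}_i)/\epsilon_p$; (2) verify $p$ is piecewise-concave, which is immediate since each $\ell_k$ is affine and the pointwise max of finitely many concave functions is piecewise-concave by Zangwill's definition; (3) prove the \emph{lower bound} $p(x_i) \le \phi(x_i)$ on the whole interval, by showing each secant $\ell_k$, even lowered by only $\kappa\delta$, lies below $\phi$: on $[x_i^{(k)}, x_i^{(k+1)}]$ the unshifted secant deviates from $\phi$ by at most $\kappa\delta$ (a standard consequence of Lipschitz continuity — for any $x_i$ in the subinterval, $|\phi(x_i) - \phi(x_i^{(k)})| < \kappa\delta$ and the secant value is a convex combination of $\phi(x_i^{(k)}), \phi(x_i^{(k+1)})$, each within $\kappa\delta$ of $\phi(x_i)$), so the lowered secant is $\le \phi$ there, and outside its own subinterval one checks it only decreases further relative to $\phi$ using the Lipschitz slope bound; (4) prove the \emph{upper bound} $\phi(x_i) - p(x_i) \le \epsilon_p$: given any $x_i$, it lies in some subinterval $k$, and there $\ell_k(x_i) \ge \phi(x_i) - \kappa\delta - \kappa\delta = \phi(x_i) - 2\kappa\delta \ge \phi(x_i) - \epsilon_p$, whence $p(x_i) \ge \ell_k(x_i) \ge \phi(x_i) - \epsilon_p$; (5) combine (3) and (4) to conclude $|p(x_i) - \phi(x_i)| \le \epsilon_p$ uniformly, which is (\ref{eq:pwapp}).

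The main obstacle is step (3) — ensuring $p$ never \emph{overshoots} $\phi$. Taking the max of affine pieces is a one-sided operation, so a naive secant interpolation fails precisely where $\phi$ curves upward and the max of neighboring secants pokes above the graph; one must argue that lowering every secant by $\kappa\delta$ kills all such overshoot simultaneously, including the contribution of a secant evaluated far from its defining subinterval. The clean way to handle the latter is: the slope of $\ell_k$ is $(\phi(x_i^{(k+1)}) - \phi(x_i^{(k)}))/\delta$, which has absolute value $< \kappa$ by Lipschitz continuity; hence for $x_i$ outside $[x_i^{(k)}, x_i^{(k+1)}]$ at distance $r$ from the nearer endpoint $x_i^{(k')}$, we have $\ell_k(x_i) \le \ell_k(x_i^{(k')}) + \kappa r \le (\phi(x_i^{(k')}) - \kappa\delta) + \kappa r$... — wait, this bound goes the wrong way, so in fact the robust argument is to note $\ell_k(x_i) - \phi(x_i) \le \ell_k(x_i^{(k')}) + \kappa r - (\phi(x_i^{(k')}) - \kappa r) \le -\kappa\delta + 2\kappa r$, which is $\le \kappa\delta$ only for $r \le \delta$; for larger $r$ one instead observes that $\ell_{k'}$ (or the secant of the subinterval actually containing $x_i$) dominates and controls $p$ there, so no individual far-away secant can create a violation that the lower bound (3) needs — more carefully, (3) is only needed as $p(x_i) \le \phi(x_i)$, and since $p = \max_k \ell_k$ it suffices that \emph{every} $\ell_k \le \phi$ everywhere, which I would establish by the bound $\ell_k(x_i) \le \phi(x_i^{(k)}) + (\text{slope})\cdot(x_i - x_i^{(k)}) - \kappa\delta \le \phi(x_i^{(k)}) + \kappa|x_i - x_i^{(k)}| - \kappa\delta$ combined with $\phi(x_i^{(k)}) \le \phi(x_i) + \kappa|x_i - x_i^{(k)}|$... this still needs $|x_i - x_i^{(k)}| \le \delta/2$, so the genuinely safe route is to lower each secant not by $\kappa\delta$ but by an amount proportional to the interval length $\overline{x}_i - \underline{x}_i$, or — simplest of all — to observe that it suffices to take, on subinterval $k$, the affine function through $(x_i^{(k)}, \phi(x_i^{(k)}))$ and $(x_i^{(k+1)}, \phi(x_i^{(k+1)}))$ \emph{restricted} implicitly by also including the two constant pieces $\phi(\underline x_i) - \kappa(\overline x_i - \underline x_i)$ and so forth so the max is pinned below $\phi$ globally; the cleanest exposition simply invokes that the pointwise maximum only needs to match $\phi$ to within $\epsilon_p$ on $[\underline x_i,\overline x_i]$ and that behavior outside the bracket is irrelevant, allowing one to truncate. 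I would present the grid spacing $\delta \le \epsilon_p/(2\kappa)$ together with the downward shift $\kappa\delta$ and carry out steps (3)–(4) as above, flagging the boundary bookkeeping as the only delicate point.
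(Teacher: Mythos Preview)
Your construction has a genuine and fatal gap, not merely a bookkeeping issue. You build $p = \max_k \ell_k$ where every $\ell_k$ is \emph{affine}. But the pointwise maximum of finitely many affine functions is always \emph{convex}. Hence your $p$ is convex, and a convex function cannot uniformly approximate an arbitrary Lipschitz $\phi$ on $[\underline x_i,\overline x_i]$ to arbitrary precision: take $\phi(x_i) = -(x_i)^2$ on $[-1,1]$; any convex $p$ with $p(\pm 1)\approx -1$ must by convexity satisfy $p(0)\le \tfrac12(p(-1)+p(1))\approx -1$, while $\phi(0)=0$, so the error is bounded below by roughly $1$ regardless of how many pieces you use. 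All of the difficulties you wrestle with in step~(3) --- the overshoot of a secant far from its own subinterval, the failure of the $\kappa\delta$ shift, the observation that shifting by $\kappa(\overline x_i-\underline x_i)$ destroys the approximation --- are symptoms of this structural obstruction, not of delicate bookkeeping.

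The paper's proof avoids this by using \emph{strictly concave} pieces, specifically concave parabolas with enforced large curvature (cf.\ Algorithm~1): each parabola matches $\phi$ near the midpoint of its subinterval and, because of its curvature, falls off rapidly outside that subinterval, so it is maximal only locally and cannot overshoot elsewhere. An alternative elementary fix in the same spirit would be to replace your affine secants by concave ``tent'' functions $p_k(x_i) = \phi(x_i^{(k)}) - \kappa\,|x_i - x_i^{(k)}|$; each $p_k$ is concave, the Lipschitz condition gives $p_k\le\phi$ everywhere, and on the subinterval containing $x_i$ one has $p(x_i)\ge p_k(x_i)\ge\phi(x_i)-\kappa\delta$, yielding the desired bound with $\delta\le\epsilon_p/\kappa$. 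Either way, the essential point you are missing is that the pieces must be \emph{genuinely} concave (strictly, or at least nonlinearly so) for the max to be flexible enough to track a nonconvex $\phi$.
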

\begin{proof}
The proof proceeds by showing that one can always approximate $\phi$ by a sequence of evenly spaced concave parabolas, with the approximation becoming arbitrarily good as the number of parabolas increases. The reader is referred to \cite{Bunin:2014}. \qed
\end{proof}

\begin{lemma}[Piecewise-concave approximation of a bilinear function over a box]\label{lemma:pwappbilinear} For the bilinear function $\pm x_i x_j$  and for $\epsilon_p > 0$, there exists a piecewise-concave approximation $p : \mathbb{R}^2 \rightarrow \mathbb{R}$ such that

\vspace{-2mm}
\begin{equation}\label{eq:pwappbl}
\mathop {\max}_{\footnotesize{ \begin{array}{c} x_i \in [\underline x_i, \overline x_i] \\ x_j \in [\underline x_j, \overline x_j] \end{array} }} | p(x_i, x_j) \mp x_i x_j | \leq \epsilon_p.
\end{equation}

\end{lemma}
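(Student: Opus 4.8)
The plan is to reduce the bilinear case to the univariate case already handled by Lemma \ref{lemma:pwappuni}. The key observation is the algebraic identity
$$
x_i x_j = \frac{1}{4}\bigl[(x_i + x_j)^2 - (x_i - x_j)^2\bigr],
$$
which expresses the bilinear term as a difference of two univariate quadratics evaluated at the (linear) change of variables $u = x_i + x_j$ and $v = x_i - x_j$. Thus I would first introduce the bounded intervals $[\underline u, \overline u]$ and $[\underline v, \overline v]$ into which $u$ and $v$ are carried when $(x_i,x_j)$ ranges over the box $[\underline x_i,\overline x_i]\times[\underline x_j,\overline x_j]$; these are finite because the original box is. On these intervals the maps $u\mapsto u^2$ and $v\mapsto v^2$ are Lipschitz-continuous (with constant $2\max\{|\underline u|,|\overline u|\}$ and similarly for $v$), so Lemma \ref{lemma:pwappuni} applies to each.

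Next I would invoke Lemma \ref{lemma:pwappuni} to obtain a piecewise-concave $q_u(u) = \max_i q_{u,i}(u)$ with $|q_u(u) - u^2| \le 2\epsilon_p$ on $[\underline u,\overline u]$, and similarly $q_v(v)=\max_i q_{v,i}(v)$ with $|q_v(v)-v^2|\le 2\epsilon_p$ on $[\underline v,\overline v]$. The candidate approximation for $+x_ix_j$ is then
$$
p(x_i,x_j) = \frac{1}{4}\Bigl[\, q_u(x_i+x_j) - v^2 \Big|_{v=x_i-x_j} \,\Bigr],
$$
but this is not yet of the required form because subtracting $v^2$ is not a max of concave functions. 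The correct move is to note that $-v^2$ is itself concave (a single concave quadratic), and more importantly that we need $p$ to be a \emph{max} of concave functions. Writing $q_u(x_i+x_j) = \max_i q_{u,i}(x_i+x_j)$, each $q_{u,i}$ is a concave parabola in the single linear argument $x_i+x_j$, hence concave as a function on $\mathbb{R}^2$; adding the globally concave term $-\tfrac14(x_i-x_j)^2$ preserves concavity of each piece. Therefore
$$
p(x_i,x_j) = \mathop{\max}_i\; \frac{1}{4}\Bigl[\, q_{u,i}(x_i+x_j) - (x_i - x_j)^2 \,\Bigr]
$$
is genuinely piecewise-concave, and the error bound follows from $|p(x_i,x_j) - x_ix_j| \le \tfrac14|q_u(x_i+x_j) - (x_i+x_j)^2| \le \tfrac12\epsilon_p \le \epsilon_p$. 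For the $-x_ix_j$ case one symmetrically approximates $(x_i-x_j)^2$ by a piecewise-concave $q_v$ and carries $-\tfrac14(x_i+x_j)^2$ as the globally concave term shared across pieces.

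The main obstacle — and the only genuinely delicate point — is ensuring that the final expression is legitimately a maximum of \emph{concave} functions rather than merely a concave-plus-concave or a difference involving a max. The resolution is exactly the asymmetry exploited above: only \emph{one} of the two squared terms is replaced by its piecewise-concave surrogate (contributing the max), while the other enters as a single globally concave quadratic that can be absorbed additively into every piece. One should also be slightly careful with the strict-versus-nonstrict inequality in the Lipschitz hypothesis of Lemma \ref{lemma:pwappuni} (it is stated with ``$<$''), but this is immaterial since $u\mapsto u^2$ on a compact interval satisfies the strict form away from coincident points with the stated constant, or one simply enlarges $\kappa$ infinitesimally. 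Everything else — tracking how $\epsilon_p$ on the univariate pieces propagates to $\epsilon_p$ on the bilinear term, and the linear change of variables — is routine. As with Lemma \ref{lemma:pwappuni}, the detailed construction of the parabolic pieces can be deferred to \cite{Bunin:2014}.
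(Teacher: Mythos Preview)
Your argument is correct and follows the same overall strategy as the paper --- a difference-of-convex splitting of the bilinear term, with the convex summand replaced by a piecewise-concave surrogate and the concave summand absorbed into every piece --- but two details differ. First, the paper uses the identity $\pm x_ix_j = \pm\tfrac12(x_i+x_j)^2 \mp\tfrac12 x_i^2 \mp\tfrac12 x_j^2$ rather than your polarization $\tfrac14[(x_i+x_j)^2-(x_i-x_j)^2]$; your choice has the mild advantage that only \emph{one} convex term ever needs approximating (for $-x_ix_j$ the paper's split produces two univariate convex pieces, $x_i^2$ and $x_j^2$, to be handled separately). Second, the paper approximates the convex component by a piecewise-\emph{linear} function (appealing to the general D.C.\ result in \cite{Bunin:2014}) rather than invoking Lemma~\ref{lemma:pwappuni}'s parabolas; either tool works, though the piecewise-linear choice is what the paper later implements algorithmically (see \S2.3). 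Your careful remark about why the max-of-concaves structure survives --- only one square is replaced by a max, the other stays as a single concave quadratic shared across pieces --- is exactly the point, and is left implicit in the paper's one-line proof.
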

\begin{proof} The bilinear term may be decomposed into the difference-of-convex (D. C.) form as $\pm x_i x_j = \pm 0.5(x_i+x_j)^2 \mp 0.5x_i^2 \mp 0.5x_j^2$, for which (\ref{eq:pwappbl}) is a specific case of a more general result where the convex component is approximated by a piecewise-linear function \cite{Bunin:2014}. \qed   
\end{proof}

In approximating the NLP problem, one may prefer either an outer or an inner approximation. The former, obtained by \emph{underapproximating} all of the nonlinear components, is desirable as it guarantees that feasibility is not lost in the approximation, with feasibility of the NLP problem guaranteeing the feasibility of the RCP. On the other hand, \emph{overapproximating} is potentially useful as any solution that solving the RCP problem returns is guaranteed to be feasible for the original problem. The following lemma establishes that one can obtain either kind of approximation while maintaining precision.

\begin{lemma}[Under- and overapproximations]\label{lemma:underover} Let $P(x,\cdot)$ be a set of approximations satisfying

\vspace{-2mm}
$$
\mathop {\max}_{x \in \mathcal{X}} | P(x,\epsilon_p) - f (x) | \leq \epsilon_p
$$
 
\noindent for all $\epsilon_p > 0$. It follows that there exist under- and overapproximations -- denoted by $p^-$ and $p^+$, respectively -- such that, for $\epsilon_p^-, \epsilon_p^+ > 0$,

\vspace{-2mm}
$$
f(x) - \epsilon_p^{-} \leq p^- (x) \leq f(x) \leq p^+ (x) \leq f(x) + \epsilon_p^{+}, \; \forall x \in \mathcal{X}.
$$.

\end{lemma}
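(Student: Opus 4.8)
The plan is to obtain $p^-$ and $p^+$ by simply translating a sufficiently precise member of the family $P(\cdot,\cdot)$ downward or upward by a constant equal to half of the prescribed tolerance. Concretely, I would set $p^-(x) := P(x,\epsilon_p^-/2) - \epsilon_p^-/2$ and $p^+(x) := P(x,\epsilon_p^+/2) + \epsilon_p^+/2$, both of which are well defined since the hypothesis guarantees that $P(x,\epsilon_p)$ exists for every $\epsilon_p > 0$.

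First I would record the two-sided bound obtained from the hypothesis evaluated at tolerance $\epsilon_p^-/2$: for all $x \in \mathcal{X}$, $f(x) - \epsilon_p^-/2 \le P(x,\epsilon_p^-/2) \le f(x) + \epsilon_p^-/2$. Subtracting $\epsilon_p^-/2$ throughout turns this into $f(x) - \epsilon_p^- \le p^-(x) \le f(x)$, which is exactly the left half of the claimed chain. The right half is entirely symmetric: applying the hypothesis at tolerance $\epsilon_p^+/2$ gives $f(x) - \epsilon_p^+/2 \le P(x,\epsilon_p^+/2) \le f(x) + \epsilon_p^+/2$, and adding $\epsilon_p^+/2$ yields $f(x) \le p^+(x) \le f(x) + \epsilon_p^+$. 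Concatenating the two bounds gives the displayed result.

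The only point worth remarking on — and the sole place where the structure of the approximations is used for the intended application — is that the translation does not destroy piecewise-concavity: if $P(x,\epsilon_p^-/2) = \max_{i} p_i(x)$ with each $p_i$ concave, then $p^-(x) = \max_i \bigl(p_i(x) - \epsilon_p^-/2\bigr)$, again a maximum of concave functions, and likewise for $p^+$. Hence $p^-$ and $p^+$ are bona fide piecewise-concave under- and overapproximations. There is essentially no obstacle here; the argument is a one-line shift, and the only thing to be careful about is splitting the error budget so that half of it is consumed by the translation while half remains available as the two-sided envelope.
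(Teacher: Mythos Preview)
Your proof is correct and follows exactly the same approach as the paper: set $p^-(x) = P(x,\epsilon_p^-/2) - \epsilon_p^-/2$ and $p^+(x) = P(x,\epsilon_p^+/2) + \epsilon_p^+/2$. Your additional remark that the constant shift preserves piecewise-concavity is not part of the paper's proof of this lemma but is a helpful observation for the downstream application.
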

\begin{proof} Set $p^- (x) = P(x,0.5\epsilon_p^{-}) - 0.5\epsilon_p^{-}$ and $p^+ (x) = P(x,0.5\epsilon_p^{+}) + 0.5\epsilon_p^{+}$. \qed
\end{proof}

The main approximation theorem follows and states that a factorable NLP problem decomposed by the procedure outlined in Lemma \ref{lemma:decomp} and Corollary \ref{cor:NLPfactor} has an outer RCP approximation, and that the solution of this approximation yields a solution $y_{app}^*$ that (a) has a lower cost function value than $y^*$ and (b) satisfies the NLP problem constraints to tolerances that go to ${\bf 0}$ as $\epsilon_p^- \rightarrow 0$ (i.e., as more precise piecewise-concave approximations are used). The proof is long, but may be outlined as:

\begin{enumerate}[(i)]
\item The constraint family (\ref{eq:decompcon}) appears $1+n_{g_{NL}}+2n_{h_{NL}}$ times in the decomposition of (\ref{eq:mainprobeq}), and the RCP approximation is derived by showing how piecewise-concave underapproximations may be obtained for all of the nonlinear elements of (\ref{eq:decompcon}). This requires first verifying that the domains of the variables remain bounded, and then applying Lemmas \ref{lemma:pwappuni} and \ref{lemma:pwappbilinear}.
\item Having derived the RCP approximation, it is then proven that the solution that results, $y_{app}^*$, must have a lower cost function value than $y^*$ because of the approximation having a relaxed constraint set.
\item Finally, it is shown that the tolerances with which $y_{app}^*$ satisfies the constraints of the NLP problem are upper bounded by finite-degree polynomials of the individual approximation errors (i.e., the specific errors from the interval $[-\epsilon_p^-, \epsilon_p^-]$ incurred at the solution for the different elements of (\ref{eq:decompcon})). For simplicity, all such individual errors will be denoted by the single scalar $\zeta_p$, with the vectors $\zeta_{g_i}$, $\zeta_{h_i}^+$, and $\zeta_{h_i}^-$ denoting the collections of errors incurred for the decomposed sets of $g_{NL,i}$, $h_{NL,i}$, and $-h_{NL,i}$, respectively. Clearly, $\zeta_{g_i}, \zeta_{h_i}^+, \zeta_{h_i}^- \rightarrow {\bf 0}$ as $\epsilon_p^- \rightarrow 0$.
\end{enumerate}

\begin{theorem}[Reverse-convex approximation of the NLP problem (1)]\label{thm:mainapprox} Let Problem (\ref{eq:mainprob}) be decomposed into an  equivalent problem with a linear cost, linear equality constraints, and inequality constraints as outlined in Lemma \ref{lemma:decomp} and Corollary \ref{cor:NLPfactor}. It follows that

\begin{enumerate}[(i)]
\item there exists an outer RCP approximation of the decomposed problem with the global solution $x^*$, such that the approximate solution $y_{app,i}^* = x_i^*$ ($i = 1,...,n_y$) satisfies
\item $f_{NL} (y_{app}^*) \leq f_{NL} (y^*)$,
\item $g_{NL,i} (y_{app}^*) \leq q_{g,i}(\zeta_{g_i})$, $\forall i = 1,...,n_{g_{NL}}$, and $|h_{NL,i} (y_{app}^*)|$ $\leq \mathop {\max} \left( q_{h,i}^+ (\zeta_{h_i}^+), q_{h,i}^- (\zeta_{h_i}^-) \right),$ $ \forall i = 1,...,n_{h_{NL}}$, where $q_{g,i}$, $q_{h,i}^+$, and $q_{h,i}^-$ are finite-degree polynomials satisfying $q_{g,i}({\bf 0}) = q_{h,i}^+({\bf 0}) = q_{h,i}^-({\bf 0}) = 0, \; \forall i$.
\end{enumerate}

\end{theorem}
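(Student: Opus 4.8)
The plan is to follow the three-part outline already sketched after the statement, treating each of (i), (ii), (iii) in turn and relying on Lemmas \ref{lemma:pwappuni}, \ref{lemma:pwappbilinear}, and \ref{lemma:underover} as black boxes.

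\emph{Step (i): constructing the outer RCP approximation.} The decomposed problem of Corollary \ref{cor:NLPfactor} consists of $1+n_{g_{NL}}+2n_{h_{NL}}$ copies of the constraint family (\ref{eq:decompcon}), together with linear equalities and the linear cost $t$. First I would argue that every variable appearing in (\ref{eq:decompcon}) ranges over a bounded interval: assumption A2 bounds the $y_j$; Lipschitz continuity (A1) then bounds each $\phi_{ij}(y_j)$, hence each $Z_{b,ij}$; the recursive products $Z_{c,ij}$ are finite products of bounded quantities and are therefore bounded; the $z_{a,i}$ are bounded because they equal bounded products; and $t$ is bounded on the (bounded, hence compact after taking closure) feasible set since $f_{NL}$ is continuous. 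With explicit finite boxes in hand, I apply Lemma \ref{lemma:pwappuni} to each univariate term $\phi_{ij}$ and Lemma \ref{lemma:pwappbilinear} to each bilinear term $Z_{b,i1}Z_{c,i1}$ and $Z_{b,i(j+1)}Z_{c,i(j+1)}$, and then invoke Lemma \ref{lemma:underover} to replace each piecewise-concave approximant by an \emph{under}approximation $p^-$ with error confined to $[-\epsilon_p^-,0]$. Since every nonlinear term in (\ref{eq:decompcon}) enters an inequality of the form $(\text{bilinear or univariate term}) - (\text{linear terms}) \le 0$ — possibly after the sign flip that appears in the last two blocks of (\ref{eq:decompcon}) — underapproximating the nonlinear term relaxes the constraint. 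Decomposing each piecewise-concave $p^- = \max_k p_k^-$ into its individual concave pieces $p_k^- \le (\cdots)$ yields finitely many genuinely concave inequality constraints; collecting all copies and stacking the linear equalities gives a problem in standard RCP form (\ref{eq:RCPprob}) with $x=(y,t,z_{all})$. The minimum $x^*$ exists because the linear cost $t$ is continuous on the (closed, bounded) RCP-feasible set, which is nonempty since it contains the NLP-feasible points.

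\emph{Step (ii): the cost inequality.} This is the easy part. Every feasible point of the decomposed NLP, in particular $(y^*,t^*,z_{all}^*)$ with $t^*=f_{NL}(y^*)$, is feasible for the RCP approximation, because each NLP inequality implies its relaxed (underapproximated) RCP counterpart and the equalities are unchanged. Hence the RCP minimum satisfies $t^*_{app}\le f_{NL}(y^*)$, and since the RCP feasibility of $x^*$ forces $f_{NL}(y^*_{app}) - t^*_{app}\le \zeta_p \le 0$ along the epigraph row (the underapproximation error being nonpositive), we get $f_{NL}(y^*_{app}) \le t^*_{app} \le f_{NL}(y^*)$.

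\emph{Step (iii): constraint-violation bounds.} Here I would fix $i$ and unwind the decomposition backwards, exactly as in the proof of Lemma \ref{lemma:decomp} but keeping track of the (nonnegative) slack each underapproximation introduces. At $x^*$, each univariate row gives $\phi_{ij}(y^*_j) - Z^*_{b,ij} \le \zeta$ for some $\zeta\in[0,\epsilon_p^-]$ and the flipped row gives $Z^*_{b,ij}-\phi_{ij}(y^*_j)\le\zeta'$, so $Z^*_{b,ij}$ equals $\phi_{ij}(y^*_j)$ up to an additive term bounded by an entry of $\zeta_{g_i}$; similarly each bilinear row controls $Z^*_{c,ij}$ against $Z^*_{b,i(j+1)}Z^*_{c,i(j+1)}$ up to such a term. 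Substituting these chains of bounds into $\prod_j Z^*_{b,ij} = Z^*_{b,i1}Z^*_{c,i1}$ and then into the first inequality of (\ref{eq:decompcon}) expresses $g_{NL,i}(y^*_{app}) = \sum_i\prod_j\phi_{ij}(y^*_{app,j})$ as the RCP slack (which is $\le 0$) plus a sum of products of the $Z^*$'s, each product differing from the true value by errors that are polynomial combinations of the entries of $\zeta_{g_i}$ and of the (bounded) variable ranges; collecting terms yields the finite-degree polynomial $q_{g,i}$ with $q_{g,i}(\mathbf 0)=0$. For the equality constraints one runs the same argument on the two decomposed copies of $h_{NL,i}(y)\le 0$ and $-h_{NL,i}(y)\le 0$, obtaining $h_{NL,i}(y^*_{app}) \le q_{h,i}^+(\zeta^+_{h_i})$ and $-h_{NL,i}(y^*_{app})\le q_{h,i}^-(\zeta^-_{h_i})$, whence the stated bound on $|h_{NL,i}(y^*_{app})|$.

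\emph{Main obstacle.} The bookkeeping in Step (iii) is where the real work lies: one must show that propagating the finitely many additive errors through the recursive product substitutions of Lemma \ref{lemma:decomp} produces a \emph{polynomial} — not something worse — in the error variables, and that all coefficients are finite. This is exactly where boundedness of every auxiliary variable (established in Step (i)) is essential, since an error $\delta$ multiplied into a product $\prod_{j} Z^*_{b,ij}$ contributes terms like $\delta\cdot\prod_{j\neq k}Z^*_{b,ij}$, which are controlled only because each $|Z^*_{b,ij}|$ is a priori bounded; without that one could not conclude the $q$'s are finite-degree polynomials vanishing at $\mathbf 0$. The remaining subtlety is purely notational — organizing the indices so that ``$\zeta_p$'', ``$\zeta_{g_i}$'', ``$\zeta^{\pm}_{h_i}$'' consistently denote the right sub-collections of per-element errors — and I would handle it by introducing the error vectors once, at the start of Step (iii), and then never again distinguishing which coordinate came from which $\phi$ or bilinear term.
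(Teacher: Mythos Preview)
Your outline matches the paper's proof closely, and the treatment of (iii) is essentially identical. There is, however, a genuine gap in your Step (i) and a smaller slip in Step (ii).

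In Step (i) you first bound all auxiliary variables over the feasible set of the \emph{decomposed} problem, and only afterwards invoke Lemmas \ref{lemma:pwappuni}--\ref{lemma:underover} on those boxes. But underapproximating relaxes the constraints, so in the RCP the auxiliaries can leave those boxes: the approximated pair of $\phi$-rows yields only $\phi_{ij}(y_j)-\epsilon_p^-\le Z_{b,ij}\le\phi_{ij}(y_j)+\epsilon_p^-$ rather than equality, and this slack propagates multiplicatively through the $Z_c$ recursion. Lemma \ref{lemma:pwappbilinear} guarantees the underestimate $p^-\le Z_{b,i(j+1)}Z_{c,i(j+1)}$ only on the box you supplied to it; if the RCP-feasible pair $(Z_{b,i(j+1)},Z_{c,i(j+1)})$ lies outside that box, the underapproximation inequality --- and with it the error accounting in (iii) --- can fail. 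The paper avoids this by \emph{interleaving} the two operations: approximate the $\phi$-rows, read off the (enlarged) RCP bounds on $Z_b$, use those bounds to approximate the bilinear row at level $j=n_y-2$, read off the resulting RCP bounds on $Z_{c,i(n_y-2)}$, and recurse upward. Your argument is repaired by the same device, but the order is not optional.

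In Step (ii) you write that RCP feasibility of $x^*$ gives ``$f_{NL}(y^*_{app})-t^*_{app}\le\zeta_p\le 0$ along the epigraph row''. There is no such single row: the constraint $f_{NL}(y)-t\le 0$ has itself been decomposed into a full copy of (\ref{eq:decompcon}) and then approximated, so the discrepancy between $f_{NL}(y^*_{app})$ and $t^*_{app}$ is controlled only by a polynomial in the error vector, exactly as in (iii), not by a single $\zeta_p$. The paper's argument for (ii) is purely the relaxation containment --- $(y^*,f_{NL}(y^*),z_{all}^*)$ is RCP-feasible, hence the RCP optimal cost is at most $f_{NL}(y^*)$ --- and does not attempt the extra step you insert.
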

\begin{proof} (i) The decomposition outlined in Lemma \ref{lemma:decomp} and Corollary \ref{cor:NLPfactor} results in $1+n_{g_{NL}}+2n_{h_{NL}}$ families of constraints, with each family having the form of (\ref{eq:decompcon}). Because the approximation procedure is identical for each family,  only the approximation of the general form (\ref{eq:decompcon}) will be considered here. 

First, let $p^1_{\phi,ij}$ and $p^2_{\phi,ij}$ denote piecewise-concave underapproximations of the univariate functions $\phi_{ij}$ and $-\phi_{ij}$, respectively. These approximations must exist in view of Lemma \ref{lemma:pwappuni}, Lemma \ref{lemma:underover}, and the conditions imposed by Assumptions A1 and A2. The constraints 

\vspace{-2mm}
$$\phi_{ij} (y_j) - Z_{b,ij} \leq 0, \;\;\; -\phi_{ij} (y_j) + Z_{b,ij} \leq 0$$ 

\noindent are replaced by the relaxed constraints 

\vspace{-2mm}
$$
p_{\phi,ij}^1 (y_j) - Z_{b,ij} \leq 0, \;\;\;p_{\phi,ij}^2 (y_j) + Z_{b,ij} \leq 0.
$$

\noindent Because $\phi_{ij} (y_j) - \epsilon_p^- \leq p^1_{\phi,ij} (y_j)$ and $-\phi_{ij} (y_j) - \epsilon_p^- \leq p^2_{\phi,ij} (y_j)$ for all $y_j$ in $\mathcal{Y}$, it follows that

\vspace{-2mm}
\begin{equation}\label{eq:erreq}
\begin{array}{l}
\begin{array}{l}
p_{\phi,ij}^1 (y_j) - Z_{b,ij} \leq 0 \vspace{1mm} \\
p_{\phi,ij}^2 (y_j) + Z_{b,ij} \leq 0
\end{array} \Rightarrow
\begin{array}{l}
\phi_{ij} (y_j) - \epsilon_p^- - Z_{b,ij} \leq 0 \vspace{1mm} \\
-\phi_{ij} (y_j) - \epsilon_p^- + Z_{b,ij} \leq 0
\end{array} \vspace{1mm} \\
\hspace{40mm} \Leftrightarrow
\phi_{ij} (y_j) - \epsilon_p^- \leq Z_{b,ij} \leq  \phi_{ij} (y_j) + \epsilon_p^-.
\end{array}
\end{equation}

\noindent It is thus clear that $Z_{b,ij}$ is bounded by the range of $\phi_{ij}$ over $\mathcal{Y}$ and the approximation error, with the boundedness of $\phi_{ij}$ following from Assumptions A1 and A2. Let $\underline Z_{b,ij}, \overline Z_{b,ij}$ denote the corresponding bounds.

From the equality $Z_{c,i(n_y-1)} -  Z_{b,in_y} = 0$, it follows that $Z_{c,i(n_y-1)}$ may be bounded by $\underline Z_{c,i(n_y-1)} = \underline Z_{b,in_y}$ and $\overline Z_{c,i(n_y-1)} = \overline Z_{b,in_y}$. Proceeding up the list of constraints in (\ref{eq:decompcon}), the constraint pair 

\vspace{-2mm}
$$
Z_{c,ij} - Z_{b,i(j+1)} Z_{c,i(j+1)} \leq 0, \;\;\;Z_{b,i(j+1)} Z_{c,i(j+1)} - Z_{c,ij} \leq 0
$$ 

\noindent is now considered for $j = n_y-2$, i.e.:

\vspace{-2mm}
$$
Z_{c,i(n_y-2)} - Z_{b,i(n_y-1)} Z_{c,i(n_y-1)} \leq 0, \;\;\;Z_{b,i(n_y-1)} Z_{c,i(n_y-1)} - Z_{c,i(n_y-2)} \leq 0.
$$

\noindent Since $Z_{b,i(n_y-1)}$ and $Z_{c,i(n_y-1)}$ are both bounded, it follows that Lemma \ref{lemma:pwappbilinear} may be applied to yield $\epsilon_p^-$-accurate underapproximations of both $- Z_{b,i(n_y-1)} Z_{c,i(n_y-1)}$ and $Z_{b,i(n_y-1)} Z_{c,i(n_y-1)}$ -- denoted by $p^1_{i(n_y-1)}$ and $p^2_{i(n_y-1)}$, respectively. Replacing the constraints by the approximations yields:

\vspace{-2mm}
$$
Z_{c,i(n_y-2)} + p^1_{i(n_y-1)} (Z_{b,i(n_y-1)}, Z_{c,i(n_y-1)}) \leq 0, \;\;\; p^2_{i(n_y-1)} (Z_{b,i(n_y-1)}, Z_{c,i(n_y-1)}) - Z_{c,i(n_y-2)} \leq 0.
$$ 

\noindent Noting that 

\vspace{-2mm}
$$
\begin{array}{l}
- Z_{b,i(n_y-1)} Z_{c,i(n_y-1)} - \epsilon_p^- \leq p^1_{i(n_y-1)} (Z_{b,i(n_y-1)}, Z_{c,i(n_y-1)}) \vspace{1mm} \\ 
Z_{b,i(n_y-1)} Z_{c,i(n_y-1)} - \epsilon_p^- \leq p^2_{i(n_y-1)} (Z_{b,i(n_y-1)}, Z_{c,i(n_y-1)})
\end{array}
$$ 

\noindent for all possible $(Z_{b,i(n_y-1)}, Z_{c,i(n_y-1)})$ in the box defined by $\underline Z_{b,i(n_y-1)} \leq Z_{b,i(n_y-1)} \leq \overline Z_{b,i(n_y-1)}$ and $\underline Z_{c,i(n_y-1)} \leq Z_{c,i(n_y-1)} \leq \overline Z_{c,i(n_y-1)}$ thus leads to the implication

\vspace{-2mm}
$$
\begin{array}{l}
\begin{array}{l}
Z_{c,i(n_y-2)} + p^1_{i(n_y-1)} (Z_{b,i(n_y-1)}, Z_{c,i(n_y-1)}) \leq 0 \vspace{1mm} \\
p^2_{i(n_y-1)} (Z_{b,i(n_y-1)}, Z_{c,i(n_y-1)}) - Z_{c,i(n_y-2)} \leq 0
\end{array} \Rightarrow
\begin{array}{l}
Z_{c,i(n_y-2)} - Z_{b,i(n_y-1)} Z_{c,i(n_y-1)} - \epsilon_p^- \leq 0 \vspace{1mm} \\
Z_{b,i(n_y-1)} Z_{c,i(n_y-1)} - \epsilon_p^- - Z_{c,i(n_y-2)} \leq 0
\end{array} \vspace{2mm} \\ 
\hspace{35mm}\Leftrightarrow Z_{b,i(n_y-1)} Z_{c,i(n_y-1)} - \epsilon_p^- \leq Z_{c,i(n_y-2)} \leq  Z_{b,i(n_y-1)} Z_{c,i(n_y-1)} + \epsilon_p^-.
\end{array}
$$

\noindent Because the bilinear terms are bounded, it follows that $Z_{c,i(n_y-2)}$ be bounded as well, with $\underline Z_{c,i(n_y-2)}, \overline Z_{c,i(n_y-2)}$ used to denote the bounds.

Repeating this procedure another $n_y-1$ times for $j = n_y-3,...,1$, one can prove the existence of the analogous bounds $\underline Z_{c,ij} \leq Z_{c,ij} \leq \overline Z_{c,ij}$ and the analogous approximations 

\vspace{-2mm}
$$
Z_{c,ij} + p^1_{i(j+1)}(Z_{b,i(j+1)}, Z_{c,i(j+1)}) \leq 0, \;\;\; p^2_{i(j+1)}(Z_{b,i(j+1)}, Z_{c,i(j+1)}) - Z_{c,ij} \leq 0.
$$ 

Finally, one arrives at the constraints 

\vspace{-2mm}
$$
Z_{b,11} Z_{c,11} + \sum_{i=1}^{m-1} z_{a,i} \leq 0, \;\;\;Z_{b,i1} Z_{c,i1} - z_{a,i-1} \leq 0,
$$ 

\noindent which may be underapproximated by  

\vspace{-2mm}
$$
p_{11} (Z_{b,11}, Z_{c,11}) + \sum_{i=1}^{m-1} z_{a,i} \leq 0, \;\;\;p_{i1}(Z_{b,i1}, Z_{c,i1}) - z_{a,i-1} \leq 0,
$$

\noindent the existence of $p_{i1}$ for $i = 1,...,m$ following from the boundedness of $Z_{b,i1}$ and $Z_{c,i1}$. 

Noting that all of the nonlinear constraints of (\ref{eq:decompcon}) have been replaced by piecewise-concave approximations, the general equivalence

\vspace{-2mm}
$$
\mathop {\max} \limits_{i} f_i (x) \leq 0 \; \Leftrightarrow f_i (x) \leq 0, \;\; \forall i
$$

\noindent may now be used to transform the constraint set into an equivalent set of concave constraints. 

(ii) To prove that $f_{NL} (y_{app}^*) \leq f_{NL} (y^*)$, it is first established that the feasibility of (\ref{eq:mainprobeq}) implies the feasibility of the RCP approximation. Letting $(\tilde y, \tilde t)$ denote a feasible point of (\ref{eq:mainprobeq}), consider first the decomposed problem prior to approximation, with the corresponding auxiliary variables implicitly fixed as $\tilde Z_{b,ij} = \phi_{ij} (\tilde y_j)$. By recursively exploiting the relation $Z_{b,i(j+1)} Z_{c,i(j+1)} = Z_{c,ij}$, one readily obtains the implication

\vspace{-2mm}
$$
\begin{array}{l}
\displaystyle \prod_{j=1}^{n_y} \phi_{1j} (\tilde y_j) + \sum_{i=1}^{m-1} z_{a,i} \leq 0 \vspace{1mm} \\
\displaystyle \prod_{j=1}^{n_y} \phi_{ij} (\tilde y_j) - z_{a,i-1} \leq 0, \;\; i = 2,...,m
\end{array} \Rightarrow f(\tilde y) \leq 0,
$$

\noindent which then implies that $g_{NL,i}(\tilde y) \leq 0$, $h_{NL,i}(\tilde y) \leq 0$, and $-h_{NL,i}(\tilde y) \leq 0$, $\forall i$. Since the epigraph variable $t$ may be added to the implication above without affecting the validity of the derivation, the inequality $f_{NL} (\tilde y) - \tilde t \leq 0$ is satisfied as well. Because the RCP approximation is obtained by relaxing the constraints via underapproximations, it follows that there would exist a set of auxiliary variables $z_{all} = \tilde z_{all}$ such that the set $(\tilde y, \tilde t, \tilde z_{all})$ be feasible for the approximation -- the choices dictated by $\tilde z_{a_,i-1} = \prod_{j=1}^{n_y} \phi_{ij}(\tilde y_j)$, $\tilde Z_{b,ij} = \phi_{ij} (\tilde y_j)$, $\tilde Z_{b,i(j+1)} \tilde Z_{c,i(j+1)} = \tilde Z_{c,ij}$, and $\tilde Z_{c,i(n_y-1)} = \tilde Z_{b,in_y}$ being one example.

Because $(y^*, t^*) = (y^*, f_{NL}(y^*))$ is feasible for (\ref{eq:mainprobeq}), it follows that there exists a set of auxiliary variables $z_{all} = z_{all}^*$ such that the decision-variable set $(y^*, f_{NL}(y^*), z_{all}^*)$ is feasible for the RCP approximation. This point has a cost function value of $f_{NL}(y^*)$, which ensures that $f_{NL}(y_{app}^*) \leq f_{NL}(y^*)$ since the global optimum of the RCP approximation can only improve upon this value.

(iii) Finally, it remains to prove that the potential constraint violations when applying $y_{app}^*$ to the original problem are bounded by finite-degree polynomials of the approximation errors. So as to avoid long, messy expressions, \emph{any} finite-degree polynomial of the vector of errors $\zeta$ with a zero at the origin will be expressed as $q(\zeta)$. Because such polynomials are closed under addition and multiplication, they will be manipulated, for convenience, in the equivalence sense -- e.g., $q(\zeta) \equiv q(\zeta)q(\zeta) \equiv a_s q(\zeta)$, with $a_s \in \mathbb{R}$. The set of all such polynomials will be denoted by $\mathcal{Q}$.

From (\ref{eq:erreq}), it follows that the auxiliary variable $Z_{b,ij}^*$ at the global solution of the RCP approximation must satisfy 

\vspace{-2mm}
$$
Z_{b,ij}^* = \phi_{ij} (y_{app,j}^*) + \zeta_p.
$$

\noindent Since $\zeta_p \in \mathcal{Q}$, this may be equivalently restated as

\vspace{-2mm}
$$
Z_{b,ij}^* \equiv \phi_{ij} (y_{app,j}^*) + q(\zeta).
$$

Following the same recursive procedure as before, note that $Z_{c,i(n_y-1)}^* \equiv \phi_{in_y} (y_{app,n_y}^*) + q(\zeta)$. Starting the chain of implications, one has that

\vspace{-2mm}
$$
\begin{array}{rcl}
Z_{b,i(n_y-1)}^* Z_{c,i(n_y-1)}^* & \equiv &  \left[ \phi_{i(n_y-1)} (y_{app,n_y-1}^*) + q(\zeta) \right] \left[ \phi_{in_y} (y_{app,n_y}^*) + q(\zeta) \right] \vspace{1mm} \\
 & \equiv & \phi_{i(n_y-1)} (y_{app,n_y-1}^*) \phi_{in_y} (y_{app,n_y}^*) + q(\zeta),
\end{array}
$$

\noindent since $\phi_{in_y} (y_{app,n_y}^*)q(\zeta)$, $\phi_{i(n_y-1)} (y_{app,n_y-1}^*) q(\zeta)$, and  $q(\zeta)q(\zeta)$ all belong to $\mathcal{Q}$, and thus so does their sum. Because

\vspace{-2mm}
$$
Z_{b,i(n_y-1)} Z_{c,i(n_y-1)} - \epsilon_p^- \leq Z_{c,i(n_y-2)} \leq  Z_{b,i(n_y-1)} Z_{c,i(n_y-1)} + \epsilon_p^-
$$

\noindent and $q(\zeta) + \zeta_p \in \mathcal{Q}$, it follows that

\vspace{-2mm}
$$
Z_{c,i(n_y-2)}^* = Z_{b,i(n_y-1)}^* Z_{c,i(n_y-1)}^* + \zeta_p \equiv \phi_{i(n_y-1)} (y_{app,n_y-1}^*) \phi_{in_y} (y_{app,n_y}^*) + q(\zeta).
$$

Generalizing the recursion, it is easy to show that

\vspace{-2mm}
$$
Z_{c,ij}^* \equiv \prod_{k = j+1}^{n_y} \phi_{i k} (y_{app,k}^*) + q(\zeta),
$$

\noindent which then allows the implications

\vspace{-2mm}
$$
\begin{array}{l}
\begin{array}{l}
\displaystyle Z_{b,11}^* Z_{c,11}^* + \sum_{i=1}^{m-1} z_{a,i} \leq 0 \vspace{1mm} \\
\displaystyle Z_{b,i1}^* Z_{c,i1}^* - z_{a,i-1} \leq 0, \;\; i = 2,...,m
\end{array} \equiv
\begin{array}{l}
\displaystyle \prod_{j = 1}^{n_y} \phi_{1j} (y_{app,j}^*) + q(\zeta) + \sum_{i=1}^{m-1} z_{a,i} \leq 0 \vspace{1mm} \\
\displaystyle \prod_{j = 1}^{n_y} \phi_{ij} (y_{app,j}^*) + q(\zeta) - z_{a,i-1} \leq 0, \;\; i = 2,...,m
\end{array} \vspace{1mm} \\ 
\hspace{55mm} \Rightarrow f(y_{app}^*) + mq(\zeta) \leq 0 \equiv f(y_{app}^*) \leq q(\zeta).
\end{array}
$$

The final result, $f(y_{app}^*) \leq q(\zeta)$, yields $g_{NL,i} (y_{app}^*) \leq q_{g,i}(\zeta_{g_i})$ for $f = g_{NL,i}$, $q = q_{g,i}$, and $\zeta = \zeta_{g_i}$. The results $h_{NL,i} (y_{app}^*) \leq q_{h,i}^+ (\zeta_{h_i}^+)$ and $-h_{NL,i} (y_{app}^*) \leq q_{h,i}^- (\zeta_{h_i}^-)$ follow in the same manner, and in turn imply that $|h_{NL,i} (y_{app}^*)| \leq \mathop {\max} \left( q_{h,i}^+ (\zeta_{h_i}^+), q_{h,i}^- (\zeta_{h_i}^-) \right)$. \qed

\end{proof}

From Theorem \ref{thm:mainapprox}, one sees that it is possible to make the overall errors, as described by the polynomials $q$, arbitrarily small as $\epsilon_p^- \rightarrow 0$. This is due to $\epsilon_p^- \rightarrow 0 \Rightarrow \zeta_p \rightarrow 0$ and the fact that the polynomials are Lipschitz-continuous with zero values at the origin.

\subsection{A Basic Piecewise-Concave Approximation Algorithm}

An approximation algorithm for univariate functions that is consistent with Lemma \ref{lemma:pwappuni} is now proposed. This algorithm is largely based on the proof of the lemma as given in \cite{Bunin:2014}, and may be qualitatively outlined as follows:

\begin{enumerate}
\item The approximation interval is discretized into two evenly spaced grids. One of the discretizations is coarse and generates wider subintervals intended for the corresponding pieces of the piecewise-concave function. The other discretization is fine and is used to obtain a relatively tight bound on the error of the approximation.
\item Concave parabolas that intersect the approximated function at the midpoints of their respective (coarse) subintervals are constructed. As an additional restriction, the parabolas are forced to have sufficiently large curvature -- enforced via constraints on their derivatives at the subinterval edges -- so as to ensure that each piece of the piecewise-concave function is only maximal over a single interval.
\item The piecewise maximum of the parabolas is shifted up or down to guarantee an over- or underapproximation, with the shift corresponding to the error bound as computed for the fine discretization.
\end{enumerate}

\begin{algorithm}[A piecewise-concave approximation by parabolas]\label{algo:pwapp}

\noindent {\bf User input}: $\phi : \mathbb{R} \rightarrow \mathbb{R}$, $\kappa$, $n_{p}$, $n_{p}^{fine}$ (a large integer satisfying $n_{p}^{fine} >\hspace{-1mm}> n_p$), $\underline x_j$, $\overline x_j$, $\rho$ (equal to 1 if an overapproximation is desired and to $-1$ if an underapproximation).

\noindent {\bf Output}: $\beta_2, \beta_1, \beta_0 \in \mathbb{R}^{n_p}$, with $p(x_j) = \displaystyle \mathop {\max}_{i=1,...,n_p} \left( \beta_{2,i} x_j^2 + \beta_{1,i} x_j + \beta_{0,i} \right) \approx \phi(x_j)$ over $[\underline x_j, \overline x_j]$.

\begin{enumerate}

\item (Discretizations) Define the discretization interval $\Delta x = (\overline x_j - \underline x_j)/n_p$ and the discrete coordinate set $x_d = \{ \underline x_j, \underline x_j + \Delta x, ..., \overline x_j - \Delta x, \overline x_j \}$. Define a finer version of both as $\Delta x^{fine} = (\overline x_j - \underline x_j)/n_{p}^{fine}$ and $x_{d}^{fine} = \{ \underline x_j, \underline x_j + \Delta x^{fine}, ..., \overline x_j - \Delta x^{fine}, \overline x_j \}$.
\item (Calculating parabola coefficients) For $i = 1,...,n_p$, compute the coefficients $\beta_{2,i}, \beta_{1,i}, \beta_{0,i}$ as

\vspace{-2mm}
$$
\begin{array}{l}
\left[ {\begin{array}{*{20}c}
   \beta_{2,i}  \\
   \beta_{1,i} \\
   \beta_{0,i}
\end{array}} \right] := 
\left[ {\begin{array}{ccccc}
   (x_{d,i} + 0.5 \Delta x)^2 & & x_{d,i} + 0.5 \Delta x & & 1  \\
   2 x_{d,i} & & 1 & & 0 \\
   2 x_{d,i+1} & & 1 & & 0
\end{array}} \right] ^{-1} 
\left[ {\begin{array}{*{20}c}
   \phi(x_{d,i} + 0.5 \Delta x)  \\
   2 \kappa \\
   -2 \kappa
\end{array}} \right]
\end{array}.
$$

\item (Shift to ensure under/overapproximation) For $i = 1,...,n_{p}^{fine}+1$, compute the approximation errors as $\bar \epsilon_{p,i} := p(x_{d,i}^{fine}) - \phi(x_{d,i}^{fine})$ and bound the worst-case error as

\vspace{-2mm}
$$
\mathop {\min}_{i = 1,...,n_p^{fine}} \bar \epsilon_{p,i} - 2.5 \kappa \Delta x^{fine} < p(x_{j}) - \phi(x_{j}) < \mathop {\max}_{i = 1,...,n_p^{fine}} \bar \epsilon_{p,i} + 2.5 \kappa \Delta x^{fine}.
$$

\noindent If $\rho = 1$, set $\beta_{0,i} := \beta_{0,i} + 2.5 \kappa \Delta x^{fine} - \displaystyle \mathop {\min}_{j = 1,...,n_p^{fine}} \bar \epsilon_{p,j}, \; \forall i = 1,...,n_p$. If $\rho = -1$, set $\beta_{0,i} := \beta_{0,i} - 2.5 \kappa \Delta x^{fine} - \displaystyle \mathop {\max}_{j = 1,...,n_p^{fine}} \bar \epsilon_{p,j}, \; \forall i = 1,...,n_p$.

\end{enumerate}

\end{algorithm}

To make a clean link with the result of Lemma \ref{lemma:pwappuni}, note that to obtain the error bound (\ref{eq:pwapp}) provided a certain $\epsilon_p$, it is sufficient to set $n_p$ such that $\Delta x \leq \epsilon_p/(2.5 \kappa)$ \cite{Bunin:2014}.

A demonstration of the algorithm when applied to a sinusoidal function is given in Fig.~\ref{fig:pwapp}, where one sees that increasing the number of pieces eventually leads to the function being approximated almost perfectly. Though computationally light and theoretically rigorous, the algorithm is not very efficient as it does not attempt to adapt to the innate local concave structures of the approximated function -- instead, it simply constructs parabolas that become more and more needlelike as $n_p$ is increased. More efficient alternatives are certainly possible, but are outside the scope of the present work. 

For the case when the approximated function is convex, as occurs when approximating decomposed bilinear terms, the procedure of gridding and obtaining a piecewise-linear approximation is used. For the composite case of $f(a^T x + b)$, with $f$ convex and $a \in \mathbb{R}^n$, $b \in \mathbb{R}$, this is done by adding an auxiliary variable and the linear equality $z = a^T x + b$, and then approximating the univariate $f(z)$ so as to avoid gridding in higher dimensions.

\begin{figure}
\begin{center}
\includegraphics[width=.9\textwidth]{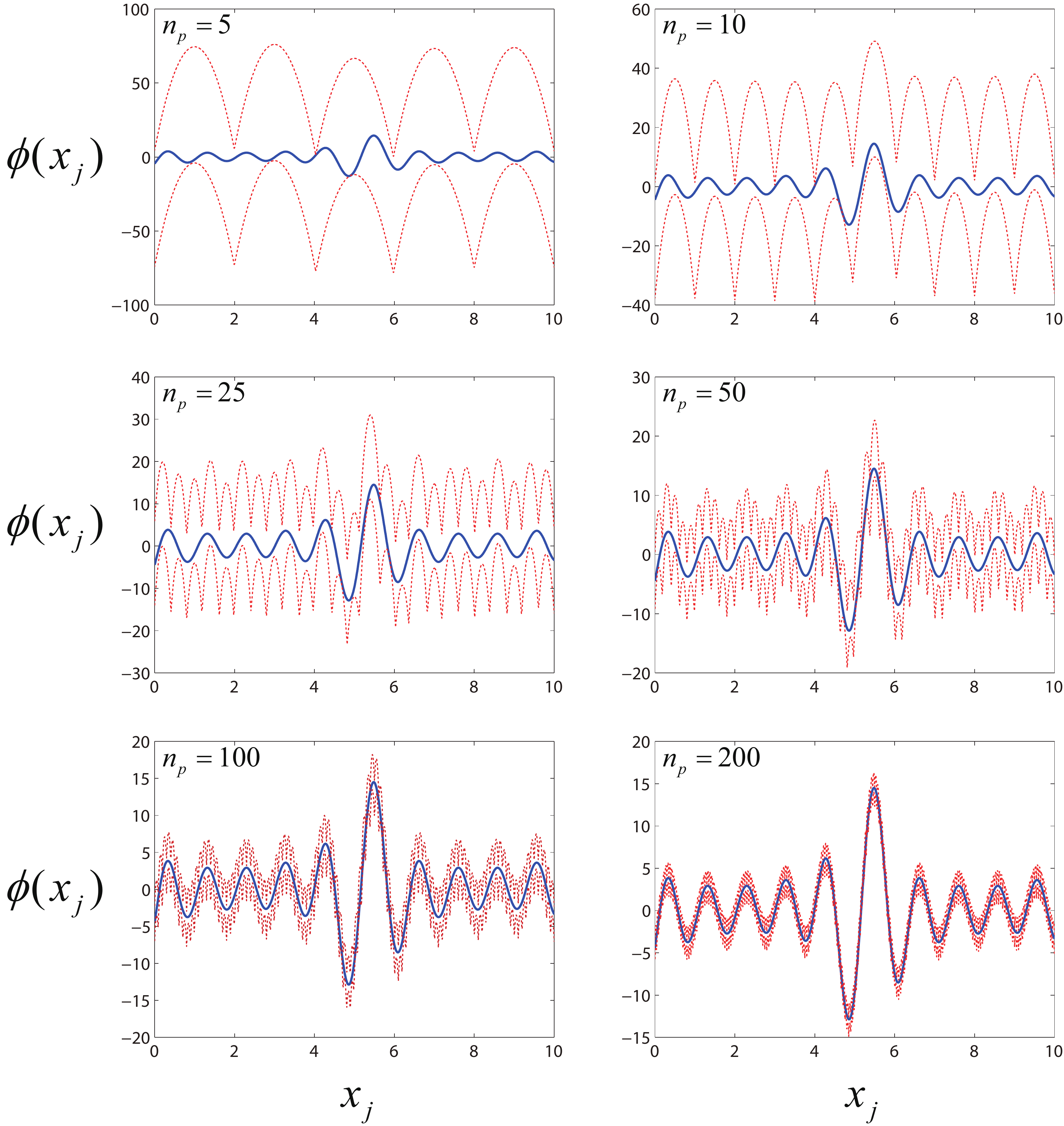}
\caption{Under/overapproximations (dashed lines) of $\phi(x_j) = \sum_{i = 1}^{5} i \; {\rm cos} \left( (i+1)x_j+i \right)$ (solid line) on the interval $x_j \in [0,10]$ as constructed by Algorithm \ref{algo:pwapp}. The Lipschitz constant is taken as $\kappa = 70$.}
\label{fig:pwapp}
\end{center}
\end{figure}

\subsection{A Decomposition and Approximation Example} 

The decomposition procedure of Lemma~\ref{lemma:decomp} together with the approximation steps of Theorem~\ref{thm:mainapprox} are sufficient to obtain an arbitrarily good RCP approximation of any factorable NLP problem that satisfies Assumptions A1 and A2. However, such a procedure is usually not necessary and may not be the most efficient in practice. As two examples of where inefficiency could arise, note that (a) there is no need to break an equality constraint that is linear and that (b) there is no need to decompose an inequality constraint function that is already concave. The following example illustrates how one may decompose and approximate a given NLP problem using a more practical approach.  

\vspace{2mm}
\noindent {\it Example 1 (RCP approximation of a factorable NLP problem)}
\vspace{2mm}

Consider the problem

\vspace{-2mm}
\begin{equation}\label{eq:sinaff}
\begin{array}{rl}
\mathop {\rm minimize}\limits_{y_1,y_2} & (\mathop {\sin} y_1)(-y_1 + 0.3y_2)  \\
 {\rm{subject}}\;{\rm{to}} & y_1 \in [-2,2], \; y_2 \in [-5,5].
\end{array}
\end{equation}

The auxiliary variable $t$ is added to obtain a linear cost via the epigraph transformation:

\vspace{-2mm}
$$
\begin{array}{rl}
\mathop {\rm minimize}\limits_{y_1,y_2,t} & t  \\
 {\rm{subject}}\;{\rm{to}} & (\mathop {\sin} y_1)(-y_1 + 0.3y_2) - t \leq 0 \\
 & y_1 \in [-2,2], \; y_2 \in [-5,5].
\end{array}
$$

Introducing the additional variables $z_1 = \mathop {\sin} y_1$ and $z_2 = -y_1 + 0.3y_2$ leads to

\vspace{-2mm}
$$
\begin{array}{rl}
\mathop {\rm minimize}\limits_{y_1,y_2,t,z_1,z_2} & t  \\
 {\rm{subject}}\;{\rm{to}} & z_1 z_2 - t \leq 0 \\
& \mathop {\sin} y_1 - z_1 = 0 \\
 & y_1 \in [-2,2], \; y_2 \in [-5,5] \\
 & -y_1 + 0.3y_2 - z_2 = 0,
\end{array}
$$

\noindent where it is noted that $z_1 \in [-1, 1]$ and $z_2 \in [-3.5, 3.5]$ implicitly.

The inequality constraint $z_1 z_2 - t \leq 0$ is then decomposed as follows:

\vspace{-2mm}
$$
\begin{array}{lll}
z_1 z_2 - t \leq 0 & \Leftrightarrow & 0.5(z_1 + z_2)^2 - 0.5 z_1^2 - 0.5 z_2^2 - t \leq 0 \vspace{2mm} \\
 & \Leftrightarrow & \begin{array}{l} 0.5z_3^2 - 0.5 z_1^2 - 0.5 z_2^2 - t \leq 0 \\ z_3 - z_1 - z_2 = 0, \end{array} \vspace{2mm} \\
 & \displaystyle \mathop {\Leftrightarrow}^{\epsilon_p^- \rightarrow 0} & \begin{array}{l} 0.5 p_{a,i}(z_3) - 0.5 z_1^2 - 0.5 z_2^2 - t \leq 0, \;\; i = 1,...,n_{p,a} \\ z_3 - z_1 - z_2 = 0,   \end{array}
\end{array}
$$

\noindent with $p_{a}$ an $n_{p,a}$-piece piecewise-concave approximation of $z_3^2$ over the interval $[-4.5, 4.5]$, as $z_3 \in [-4.5, 4.5]$ follows implicitly from the bounds on $z_1$ and $z_2$.

The equality constraint $\mathop {\sin} y_1 - z_1 = 0$ is broken and decomposed as

\vspace{-2mm}
\begin{equation}\label{eq:decomp2}
\begin{array}{lll}
\mathop {\sin} y_1 - z_1 = 0 & \Leftrightarrow & \begin{array}{l} \mathop {\sin} y_1 - z_1 \leq 0 \\ -\mathop {\sin} y_1 + z_1 \leq 0 \end{array} \vspace{2mm} \\
& \displaystyle \mathop {\Leftrightarrow}^{\epsilon_p^- \rightarrow 0} &   \begin{array}{l} p_{b,i} (y_1) - z_1 \leq 0, \;\; i = 1,...,n_{p,b} \\ p_{c,i}( y_1) + z_1 \leq 0, \;\; i = 1,...,n_{p,c}, \end{array}
\end{array}
\end{equation}

\noindent where $p_{b}$ and $p_c$ are $n_{p,b}$- and $n_{p,c}$-piece piecewise-concave approximations, over the interval $y_1 \in [-2, 2]$, of $\mathop {\sin} y_1$ and $-\mathop {\sin} y_1$, respectively.

Defining $x = (y_1,y_2,t,z_1,z_2,z_3)$, the standard-form RCP approximation of (\ref{eq:sinaff}) follows:

\vspace{-2mm}
\begin{equation}\label{eq:sinaffRCP}
\begin{array}{rl}
\mathop {\rm minimize}\limits_{x} & x_3  \\
 {\rm{subject}}\;{\rm{to}} & 0.5 p_{a,i}(x_6) - 0.5 x_4^2 - 0.5 x_5^2 - x_3 \leq 0, \;\; i = 1,...,n_{p,a} \\
& p_{b,i} (x_1) - x_4 \leq 0, \;\; i = 1,...,n_{p,b} \\
& p_{c,i} (x_1) + x_4 \leq 0, \;\; i = 1,...,n_{p,c} \\
& -x_1 - 2 \leq 0, \; x_1 - 2 \leq 0 \\
& -x_2 - 5 \leq 0, \; x_2 - 5 \leq 0 \\
 & -x_1 + 0.3x_2 - x_5 = 0 \\
 & - x_4 - x_5 + x_6 = 0.
\end{array}
\end{equation}

\section{RCP Regularity}
\label{sec:RCP}

It has been previously shown \cite{Ueing:72,Hillestad:80} that Problem (\ref{eq:RCPprob}) may be solved reliably by an enumeration approach for the case where the cost function is not linear but strictly concave. In particular, such a method exploits the properties that every local minimum (a) must occur at an intersection of $n$ constraints whose gradients at the minimum are linearly independent, and (b) may be found by solving the convex ``reverse problem'' (\ref{eq:RCPprobrevgen}). It thus follows that one may enumerate all of the $\binom{n_g}{n-n_C}$ inequality constraint combinations, solve the reverse problem for each, and then compare the cost values for the resulting feasible points.

As pointed out by Hillestad and Jacobsen \cite{Hillestad:80}, such an enumeration procedure fails to generalize to problems with a nonstrict concave cost function since the second property need not hold. Because the strict concavity assumption may be too restrictive for some practical problems, an attempt to relax this restriction is made here.

\begin{definition}[RCP regularity]\label{def:RCPreg} The RCP problem (\ref{eq:RCPprob}) is called ``regular'' if (\ref{eq:RCPprobrev}) holds.
\end{definition}

Clearly, any regular RCP problem may be solved by enumeration, as solving (\ref{eq:RCPprobrevgen}) for all possible candidate sets $i_A$ must yield $x^*$ as the solution when $i_A = i_A^*$. The question of whether a given RCP problem is regular or not is open to further investigation. Here, a fairly general sufficient condition for RCP regularity that may be verified for certain problems is provided.

\begin{theorem}[Sufficient condition for RCP regularity]\label{thm:suffcon} If the global minimum $x^*$ is a strict local minimum of (\ref{eq:RCPprob}), then the RCP problem (\ref{eq:RCPprob}) is regular.
\end{theorem}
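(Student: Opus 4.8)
The plan is to argue by contradiction, using the concavity of the $g_i$ to extend feasibility along line segments through $x^*$; no multipliers are needed, although a KKT-based route is also available (remarked on at the end). Throughout I use only that $x^*$ is a strict local minimum (it is also the global minimum by B3, but local minimality suffices where invoked).

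\emph{Step 1: pinning down the active set.} Let $i_{A^*} = \{\, i : g_i(x^*) = 0 \,\}$ and $k = \# i_{A^*}$. Suppose $k < n - n_C$. Since $Cx = d$ is always active and $\mathrm{rank}\,C = n_C$ (B2), LICQ at $x^*$ (B4) forces the $k + n_C < n$ vectors $\{\, \nabla g_i(x^*) : i \in i_{A^*} \,\} \cup \{\, C_{1.}^T, \dots, C_{n_C.}^T \,\}$ to be linearly independent, so their orthogonal complement contains some $v \neq 0$. For $|t|$ small, $x^* + t v$ satisfies $C(x^*+tv)=d$, keeps every inactive constraint strictly negative by continuity (B1, and there are finitely many of them), and — the key point — keeps each active constraint feasible, since concavity gives $g_i(x^* + tv) \le g_i(x^*) + t\,\nabla g_i(x^*)^T v = 0$ for \emph{every} $t$. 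Hence $x^* + tv \in \mathcal{X}$ for all small $|t|$, and comparing $c^T(x^* + tv) = c^T x^* + t\,c^T v$ with $c^T x^*$ contradicts local minimality if $c^T v \neq 0$ and strict local minimality if $c^T v = 0$. Therefore $k \ge n - n_C$, while LICQ forces $k + n_C \le n$, i.e.\ $k \le n - n_C$; so $k = n - n_C$ exactly, and the $n$ vectors above form a basis of $\mathbb{R}^n$ (so the index set $i_{A^*}$ appearing in the definition of regularity is well defined and the intersection of the corresponding active constraints with $Cx=d$ is a single point).

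\emph{Step 2: solving the reverse problem.} Put $\mathcal{R} = \{\, x : g_i(x) \ge 0\ \forall i \in i_{A^*};\ Cx = d \,\}$; this set is convex (each $\{\, g_i \ge 0 \,\}$ is convex because $g_i$ is concave) and contains $x^*$ (where $g_i(x^*)=0$). Suppose some $\hat x \in \mathcal{R}$ satisfies $c^T \hat x \ge c^T x^*$ with $\hat x \neq x^*$; this covers both ``$x^*$ not optimal'' and ``$x^*$ not the unique optimum''. On the line $\bar x(t) = x^* + t(\hat x - x^*)$, each map $t \mapsto g_i(\bar x(t))$ ($i \in i_{A^*}$) is concave, equals $0$ at $t=0$, and is $\ge 0$ at $t = 1$ (since $\hat x \in \mathcal{R}$); the standard monotonicity of secant slopes from the origin for a concave function vanishing at $0$ then gives $g_i(\bar x(t)) \le t\,g_i(\hat x) \le 0$ for all $t < 0$. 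With $C\bar x(t) = d$ and continuity at the inactive constraints, this places $\bar x(t) \in \mathcal{X}$ for all sufficiently small $t < 0$, while $c^T \bar x(t) = c^T x^* + t\, c^T(\hat x - x^*) \le c^T x^*$. If $c^T \hat x > c^T x^*$ the last inequality is strict, contradicting local minimality of $x^*$; if $c^T \hat x = c^T x^*$ we obtain feasible points $\bar x(t) \neq x^*$ arbitrarily close to $x^*$ with equal cost, contradicting \emph{strict} local minimality. Hence no such $\hat x$ exists, so $x^*$ is the unique maximizer in (\ref{eq:RCPprobrev}); i.e.\ the RCP problem is regular.

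\emph{Where the difficulty lies.} The one substantive ingredient is the concavity-driven line extension; it is used twice, differing only in which half-lines remain feasible ($\pm v$ for the active-set count, the backward half-line $t<0$ for the reverse problem). I expect the only care needed is in stating the secant-slope fact precisely and in the finitely-many-constraints continuity argument for the inactive $g_j$; the LICQ dimension count and the convexity of $\mathcal{R}$ are routine. Alternatively, one may write down KKT for (\ref{eq:RCPprob}) at $x^*$ (valid under LICQ) and observe that the same nonnegative multipliers on the constraints indexed by $i_{A^*}$ certify KKT optimality for the convex program (\ref{eq:RCPprobrev}), invoking sufficiency of KKT for convex problems — but this gives optimality without uniqueness, so the direct argument above is preferable.
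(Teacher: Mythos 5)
Your proof is correct, but it follows a genuinely different route from the paper's. The paper's argument linearizes the problem at $x^*$ and leans on two imported facts from Hillestad and Jacobsen: that a strict local minimum uniquely solves the linearized problem, and that $x^*$ must be a basic solution (which supplies $\#\, i_{A^*} \geq n-n_C$); it then passes from the linearized reverse problem to (\ref{eq:RCPprobrev}) by noting that the convex reverse feasible set is contained in its own linearization at $x^*$. You instead prove everything from scratch with two concavity-driven line extensions: in Step 1 you re-derive the basic-solution property directly (a direction $v$ orthogonal to the active gradients and to the rows of $C$ stays feasible on both sides because $g_i(x^*+tv)\le g_i(x^*)+t\nabla g_i(x^*)^Tv=0$, contradicting strict local minimality), and in Step 2 you use the monotone-secant-slope inequality $g_i(\bar x(t))\le t\,g_i(\hat x)\le 0$ for $t<0$ to push any reverse-feasible competitor $\hat x$ with $c^T\hat x\ge c^Tx^*$ backwards into $\mathcal{X}$ near $x^*$, again contradicting strict local minimality; note this backward-extension step is legitimate because the $g_i$ are assumed concave on all of $\mathbb{R}^n$ (not merely near $\mathcal{X}$), which your argument implicitly requires since $\hat x\notin\mathcal{X}$ in general. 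What each approach buys: yours is self-contained, avoids the citation to \cite{Hillestad:80} entirely, and delivers uniqueness of the maximizer of (\ref{eq:RCPprobrev}) in one stroke; the paper's is shorter given the cited results and exposes the geometric picture (the nested linearized feasible sets of Fig.~\ref{fig:th3pic}) that motivates the enumeration framework. Your closing remark is also accurate: the KKT route certifies only optimality of $x^*$ for the convex reverse problem, not uniqueness, so the direct argument is indeed the right choice.
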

\begin{proof} Consider Problem (\ref{eq:RCPprob}) following linearization around $x^*$:

\vspace{-2mm}
\begin{equation}\label{eq:RCPproblin}
\begin{array}{rl}
\mathop {\rm minimize}\limits_{x} & c^T x \\
{\rm{subject}}\;{\rm{to}} & g_i(x^*) + \nabla g_i (x^*)^T (x - x^*) \leq 0,\hspace{3mm}i = 1,...,n_g \\
 & Cx = d.
\end{array}
\end{equation} 

\noindent Because $x^*$ is a strict local minimum of (\ref{eq:RCPprob}), it follows that it solves (\ref{eq:RCPproblin}) uniquely \cite[Theorem 6]{Hillestad:80}. Since $g_i (x^*) = 0, \; \forall i \in i_{A}^*$ and those constraints that are inactive at $x^*$ may be removed without affecting the solution, one has that $x^*$ must also uniquely solve

\vspace{-2mm}
\begin{equation}\label{eq:RCPproblinA}
\begin{array}{rl}
\mathop {\rm minimize}\limits_{x} & c^T x \\
{\rm{subject}}\;{\rm{to}} & \nabla g_i (x^*)^T (x - x^*) \leq 0,\hspace{3mm}i \in i_{A^*} \\
 & Cx = d.
\end{array}
\end{equation} 

To prove that $\# i_{A^*} = n - n_C$, first note that $\# i_{A^*} > n - n_C$ is impossible due to Assumption B4, as this would imply linear dependence of the gradients of the active constraints. That $\# i_{A^*} < n - n_C$ cannot be true follows from the property that $x^*$ must be a basic solution of (\ref{eq:RCPprob}), which constrains the Jacobian of the active set to have rank equal to $n$ \cite{Hillestad:80}.

The vector $x^*$ must also uniquely solve the reverse linearized problem

\vspace{-2mm}
\begin{equation}\label{eq:RCPproblinrev}
\begin{array}{rl}
\mathop {\rm maximize}\limits_{x} & c^T x \\
{\rm{subject}}\;{\rm{to}} & \nabla g_i (x^*)^T (x - x^*) \geq 0,\hspace{3mm}\forall i \in i_{A^*} \\
 & Cx = d,
\end{array}
\end{equation} 

\noindent as (\ref{eq:RCPproblinA}) and (\ref{eq:RCPproblinrev}) have the same optimality conditions.

Finally, as (\ref{eq:RCPproblinrev}) is the linearization of (\ref{eq:RCPprobrev}) around $x^*$, it follows from the convexity of (\ref{eq:RCPprobrev}) that the latter has a smaller feasible region than (\ref{eq:RCPproblinrev}), which implies that $x^*$ must solve (\ref{eq:RCPprobrev}) uniquely as well. The regularity of (\ref{eq:RCPprob}) then follows from Definition \ref{def:RCPreg}. \qed 

\end{proof}

A geometric illustration of the proof is given in Fig. \ref{fig:th3pic}. It is not difficult to see that RCP problems with a strict concave cost must only admit strict local minima and that this property is retained if the problem is placed into standard form following an epigraph transformation such as the one in (\ref{eq:mainprobeq}).

\begin{figure*}
\begin{center}
\includegraphics[width=.5\textwidth]{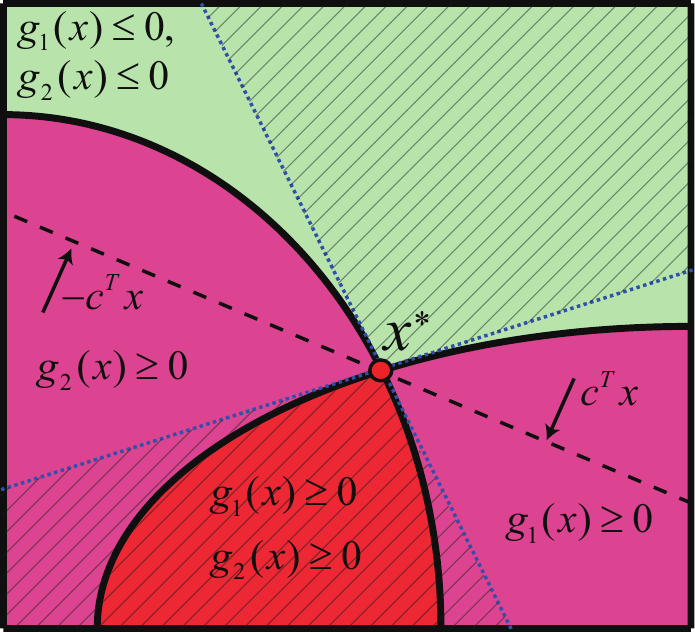}
\caption{Geometrical illustration of Theorem \ref{thm:suffcon} and its proof, with the light and dark areas denoting the feasible and infeasible spaces, respectively, of an RCP problem whose global minimum is defined by two concave inequality constraints. The shaded regions represent the feasible spaces of the linearized problems. Clearly, as $x^*$ is a strict local minimum for the original problem, it is also the strict local minimum for the original problem linearized around $x^*$. It is also clear that the linearized reverse problem has the same solution as the linearized original problem, and, as the infeasible-side linearized set is a superset of $g_i (x) \geq 0, \; \forall i \in i_{A^*}$, it follows that solving the reverse problem over the latter also yields $x^*$.}
\label{fig:th3pic}
\end{center}
\end{figure*}

\section{An Extended RCP Enumeration Framework}
\label{sec:algorithm}

The basic framework for an extended RCP enumeration-based solver is now presented in three parts. The enumeration scheme is outlined first, as this constitutes the base of the method and guarantees its finite-time convergence to a global minimum of (\ref{eq:RCPprob}) provided that the problem is regular. The second part then goes through a number of techniques that may dramatically speed up the search by reducing the number of active sets to be checked. These are divided into (a) techniques specific to (extended) RCP and (b) domain reduction techniques that are common to existing global optimization solvers. Finally, the third part combines these ideas to provide the RCP algorithm as it is coded in this work.

\subsection{Enumeration of Active Sets: General Procedure}
\label{sec:algo1}

The global minimum of a regular RCP problem may be found by simply checking all of the possible $\binom {n_g}{n-n_C}$ active sets, solving the corresponding convex problem (\ref{eq:RCPprobrevgen}) for each, and then comparing the solutions to find the one with the best value. However, much like the brute vertex-enumeration techniques of concave minimization \cite{McKeown:76,Pardalos:86}, such an approach, though guaranteed to solve the problem in finite time, is not computationally enviable when $\binom {n_g}{n-n_C}$ is large. The work by Ueing \cite{Ueing:72} has proposed considering active \emph{subsets} as a means of bypassing this difficulty -- i.e., by solving versions of (\ref{eq:RCPprobrevgen}) with less than $n-n_C$ inequality constraints used in the definition of $i_A$. This is essentially the approach pursued here, with the key difference being that the feasible domain of (\ref{eq:RCPprobrevgen}) is further restricted by the addition of the constraint $x \in \mathcal{C}$, where $\mathcal{C}$ is some convex set known to contain $x^*$ (e.g., a convex relaxation of $\mathcal{X}$).

Envisioning the possible active sets as the ultimate level of a tree (Fig. \ref{fig:branchtree}), the basic idea of checking active subsets involves starting at the base level of the tree, with a single inequality constraint, and building up to the final level of $n - n_C$ constraints. For the example of Fig. \ref{fig:branchtree}, the method proposed here would check the subsets, represented by their constraint indices, in the order $\{ 1 \}$, $\{ 2 \}$, $\{ 3 \}$, $\{ 1, 2 \}$, $\{ 1, 3 \}$, $\{ 1, 4 \}$, $\{ 2, 3 \}$, $\{ 2, 4 \}$, $\{ 3, 4 \}$, $\{ 1, 2, 3 \}$, $\{ 1, 2, 4 \}$, $\{ 1, 2, 5 \}$, $\{ 1, 3, 4 \}$, $\{ 1, 3, 5 \}$, $\{ 1, 4, 5 \}$, $\{ 2, 3, 4 \}$, $\{ 2, 3, 5 \}$, $\{ 2, 4, 5 \}$, $\{ 3, 4, 5 \}$, with the resulting full active sets of cardinality $n - n_C$ checked last. Subsets such as $\{ 1,6 \}$ would not be considered because this branch cannot be ``grown'' into a full active set while maintaining the imposed numerical ordering. While such an enumeration potentially requires more computations than brute enumeration, much useful information may be discovered while checking the subsets, which makes it possible to remove certain collections of sets from consideration entirely, thus ultimately reducing the overall computational burden to far below that of brute enumeration.

This is now formalized in the following lemma.

\begin{figure*}
\includegraphics[width=1\textwidth]{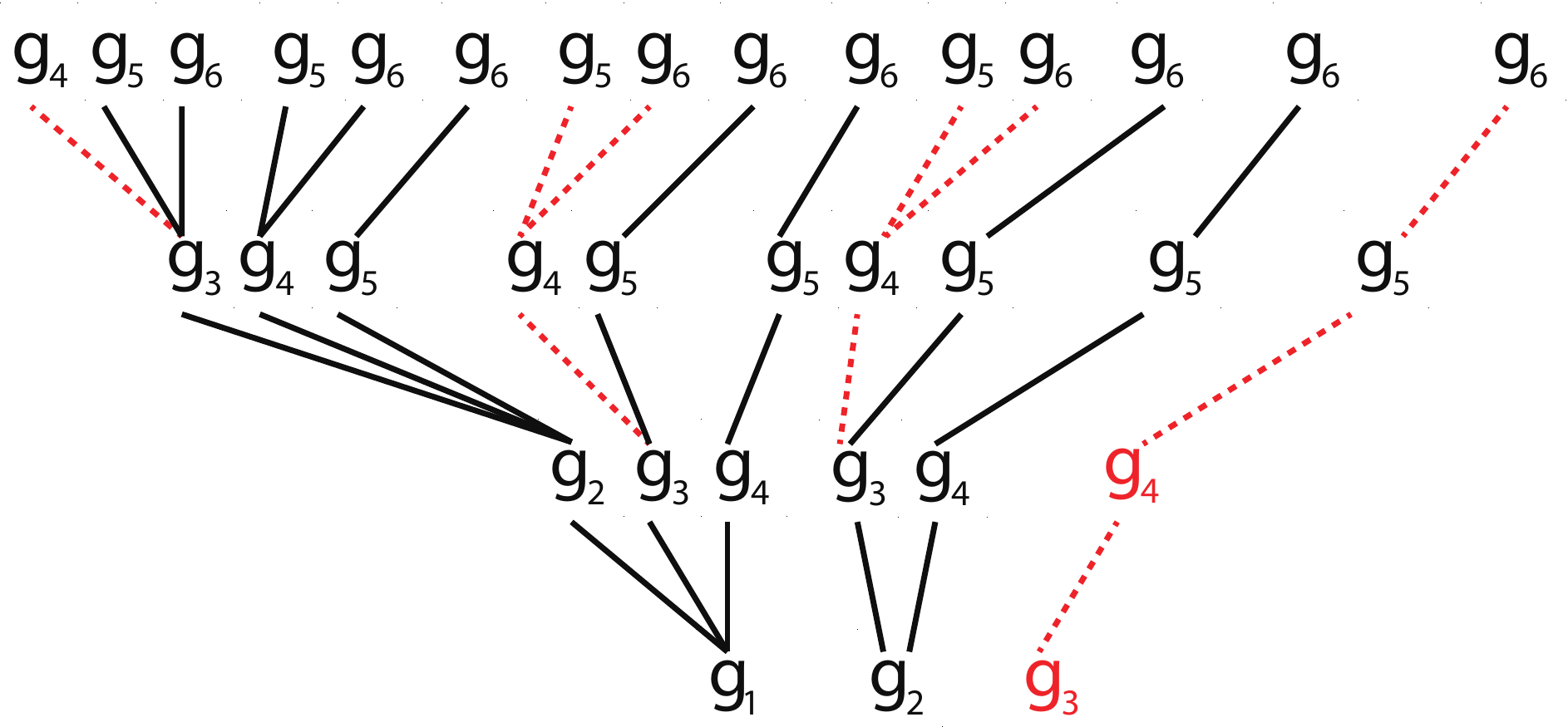}
\caption{The tree of possible active sets and subsets for a problem with $n - n_C = 4$ and $n_g = 6$. The effect of fathoming the subset $\{ 3, 4 \}$ is demonstrated, with all the resulting fathomed active sets shown via dotted branches.}
\label{fig:branchtree}
\end{figure*}

\begin{lemma}[Fathoming of active subsets]\label{lemma:fath1} Let $i_{\tilde A}$ denote the index set of $\tilde n < n - n_C$ inequality constraints of Problem (\ref{eq:RCPprob}), assumed regular with a global minimum $x^*$ and the corresponding active set $i_{A^*}$, and consider the problem

\vspace{-1mm}
\begin{equation}\label{eq:feascheck}
\begin{array}{rl}
\tilde x^* \in {\rm arg} \mathop {\rm minimize}\limits_{x} & c^T x \\
{\rm{subject}}\;{\rm{to}} & g_i(x) \geq 0, \;\; \forall i \in i_{\tilde A} \\
& Cx = d \\
& x \in \mathcal{C}.
\end{array}
\end{equation}

\noindent It follows that

\begin{enumerate}[(i)]
\item if (\ref{eq:feascheck}) is infeasible, then $i_{\tilde A} \not \subset i_{A^*}$,
\item if (\ref{eq:feascheck}) is feasible and $i_{\tilde A} \subset i_{A^*}$, then $c^T \tilde x^* \leq c^T x^*$,
\item defining $i_V = \{ i : g_i(\tilde x^*) \geq 0 \}$, Problem (\ref{eq:feascheck}) will be feasible for all $i_{\tilde A} \subseteq i_{V}$.
\end{enumerate}

\end{lemma}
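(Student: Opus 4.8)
The plan is to prove the three claims essentially directly from the definitions, since each one is a short logical deduction once the setup is unpacked.

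For part (i), I would argue by contraposition. Suppose $i_{\tilde A} \subset i_{A^*}$. Then every constraint indexed in $i_{\tilde A}$ is active at $x^*$, i.e. $g_i(x^*) = 0 \geq 0$ for all $i \in i_{\tilde A}$. Also $Cx^* = d$ holds trivially, and $x^* \in \mathcal{C}$ by the standing assumption that $\mathcal{C}$ is a convex set known to contain $x^*$. Hence $x^*$ is feasible for (\ref{eq:feascheck}), so (\ref{eq:feascheck}) is not infeasible. Contrapositively, if (\ref{eq:feascheck}) is infeasible then $i_{\tilde A} \not\subset i_{A^*}$.

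For part (ii), the hypothesis $i_{\tilde A} \subset i_{A^*}$ gives, by the argument just made, that $x^*$ is a feasible point of (\ref{eq:feascheck}). Since $\tilde x^*$ is an arg-minimizer of $c^Tx$ over the feasible set of (\ref{eq:feascheck}), minimality yields $c^T\tilde x^* \leq c^T x^*$. For part (iii), fix any $i_{\tilde A} \subseteq i_V$ where $i_V = \{i : g_i(\tilde x^*) \geq 0\}$. By construction $\tilde x^*$ satisfies $Cx = d$, lies in $\mathcal{C}$, and satisfies $g_i(\tilde x^*) \geq 0$ for every $i \in i_V$, hence for every $i \in i_{\tilde A} \subseteq i_V$. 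Therefore $\tilde x^*$ is a feasible point of the instance of (\ref{eq:feascheck}) associated with $i_{\tilde A}$, so that instance is feasible.

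There is no real obstacle here; the only point requiring mild care is making explicit that $x^* \in \mathcal{C}$ and $\tilde x^* \in \mathcal{C}$ are available, which follows from $\mathcal{C}$ being chosen as a convex superset of (a relaxation of) $\mathcal{X}$ containing $x^*$, together with $\tilde x^*$ being returned as a solution of (\ref{eq:feascheck}) and hence satisfying its constraint $x \in \mathcal{C}$. One should also note in passing that parts (i) and (iii) are purely feasibility statements and do not invoke regularity; regularity and the existence of $x^*$ are only needed to make the reference to $i_{A^*}$ meaningful in parts (i) and (ii).
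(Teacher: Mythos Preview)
Your proof is correct. Parts (i) and (iii) match the paper's argument essentially verbatim. For part (ii), however, you take a shorter route than the paper: you simply note that $x^*$ itself lies in the feasible set of (\ref{eq:feascheck}) (by the same reasoning as in (i)), so the minimum of $c^Tx$ over that set is at most $c^Tx^*$. The paper instead invokes regularity to place $x^*$ as the \emph{maximizer} of the reverse problem (\ref{eq:RCPprobrev}) with $x\in\mathcal{C}$ adjoined, introduces an intermediate minimizer $\underline{x}^*$ over the full active set $i_{A^*}$, and then relaxes constraints to reach $\tilde x^*$. Your argument is more elementary and, as you observe in your closing remark, shows that regularity is not actually needed for (ii) either; the paper's detour through the reverse problem is unnecessary here.
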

\begin{proof}

\begin{enumerate}[(i)]
\item By contradiction, suppose that Problem (\ref{eq:feascheck}) is infeasible but that $i_{\tilde A} \subset i_{A^*}$. Because $g_i( x^* ) = 0, \; \forall i \in i_{A^*}$, it follows that $g_i( x^* ) = 0, \; \forall i \in i_{\tilde A}$. The equality constraints are also satisfied at $x^*$, while $x^* \in \mathcal{C}$ by definition. However, this contradicts the infeasibility of (\ref{eq:feascheck}), since $x^*$ is clearly feasible for this problem. 
\item Since $x^* \in \mathcal{C}$, the constraint $x \in \mathcal{C}$ may be added to (\ref{eq:RCPprobrev}) without affecting the solution:

\vspace{-1mm}
$$
\begin{array}{rl}
x^* = {\rm arg} \mathop {\rm maximize}\limits_{x} & c^T x \\
{\rm{subject}}\;{\rm{to}} & g_i(x) \geq 0, \;\; \forall i \in i_{A^*} \\
 & Cx = d \\
 & x \in \mathcal{C}.
\end{array}
$$

Consider now the minimization

\vspace{-1mm}
\begin{equation}\label{eq:feas3}
\begin{array}{rl}
\underline x^* \in {\rm arg} \mathop {\rm minimize}\limits_{x} & c^T x \\
{\rm{subject}}\;{\rm{to}} & g_i(x) \geq 0, \;\; \forall i \in i_{A^*} \\
& Cx = d \\
& x \in \mathcal{C},
\end{array}
\end{equation}

\noindent where it is clear that $c^T \underline x^* \leq c^T x^*$. Since (\ref{eq:feascheck}) is obtained from (\ref{eq:feas3}) by removing certain constraints, it follows that $c^T \tilde x^* \leq c^T \underline x^*$ and, consequently, that $c^T \tilde x^* \leq c^T x^*$.

\item Since $g_i (\tilde x^*) \geq 0, \; \forall i \in i_{V}$, it follows that $\tilde x^*$ will be a feasible point for all problems (\ref{eq:feascheck}) where $i_{\tilde A} \subseteq i_{ V}$. \qed

\end{enumerate}

\end{proof}

One sees that solving Problem (\ref{eq:feascheck}) is very useful as it does triple duty by

\begin{enumerate}[(a)]
\item fathoming subsets that cannot define an optimal active set whenever the problem is infeasible, thereby saving the computational effort of checking the active sets that include the fathomed combinations as subsets,
\item providing a lower bound on the globally optimal cost value potentially attained by certain active sets, thereby allowing for those sets to be fathomed if a feasible point with a cost value that is lower than this lower bound is found (i.e., $c^T x^* < c^T \tilde x^* \Rightarrow i_{\tilde A} \not \subset i_{A^*}$),
\item allowing for computational savings by foregoing Problem (\ref{eq:feascheck}) when it is known to be feasible due to $i_{\tilde A} \subseteq i_{V}$ for some previously found $i_V$.
\end{enumerate}

\noindent Of these three points, (a) is the most crucial as it allows for entire branches to be removed from the enumeration. Fig. \ref{fig:branchtree} illustrates this, where fathoming a single subset immediately removes 6 of the 15 possible active sets.

To properly manage the tree of active sets, \emph{fathoming} and \emph{validation} bases are built to cut out certain branches entirely (Points (a) and (b) above) and to skip solving (\ref{eq:feascheck}) for certain others (Point (c)), respectively. These will be denoted by the matrices $F$ and $V$, both of width $n_g$ with each column corresponding to one of the $n_g$ constraints. For algorithmic convenience, both $F$ and $V$ will be binary with 0 denoting the absence of a constraint and 1 denoting its presence. In the example of Fig. \ref{fig:branchtree}, a particular instance of $F$ and $V$ may be: 

\vspace{-1mm}
$$
F = \left[ \begin{array}{cccccc}
0 & 0 & 1 & 1 & 0 & 0 \\
1 & 1 & 1 & 0 & 0 & 0 \\
0 & 0 & 0 & 0 & 1 & 0  
\end{array}\right], \;\;
V = \left[ \begin{array}{cccccc}
1 & 0 & 0 & 1 & 0 & 1 \\
1 & 0 & 1 & 0 & 0 & 1  
\end{array}\right].
$$

\noindent For $F$, this may be read as stating that the index subsets $\{ 3, 4 \}, \{ 1, 2, 3 \}, \{ 5 \} \not \subset i_{A^*}$, thereby fathoming those $i_{\tilde A}$ and $i_A$ that are supersets of any of these sets. For $V$, this means that Problem (\ref{eq:feascheck}) will be feasible for any $i_{\tilde A} \subseteq \{ 1, 4, 6\}$ or $i_{\tilde A} \subseteq \{ 1, 3, 6\}$, and thus may be skipped for those subsets. With respect to the ordering of the constraints in $F$ and $V$, the convention used in this paper will be as follows:

\begin{itemize}
\item Any constraints that are known to be active at $x^*$ will be listed first.
\item The $2n$ constraints corresponding to the lower and upper bounds on the decision variables, which will be required for the algorithms presented here, will be listed in the last $2n$ columns of $F$ and $V$, with the last $n$ columns corresponding to the upper bounds and the $n$ columns before that corresponding to the lower.
\item The remaining constraints will be listed in between in the order specified by the user.
\end{itemize}

\subsection{Fathoming and Domain Reduction Techniques}
\label{sec:algo2}

While Lemma \ref{lemma:fath1} provides a basic means to fathom those active sets that cannot define $x^*$, a number of additional fathoming rules may be proposed and exploited to expedite the enumeration further. Some of these take direct advantage of the concavity, the active-set nature, and the approximation steps that characterize the extended RCP framework. Others, however, employ standard domain reduction techniques common to modern global optimization solvers so as to shrink $\mathcal{C}$ as much as possible, thereby increasing the number of subsets for which (\ref{eq:feascheck}) is infeasible.

\subsubsection{Fathoming Techniques Particular to RCP and Extended RCP}

\vspace{1mm}
\noindent {\it Innate Fathoming}
\vspace{1mm}

Once the RCP problem has been defined, there are certain constraints or constraint combinations that are innately known to either never intersect or to not define $x^*$. These are stated in the following lemma.

\begin{lemma}[Innate fathoming rules]\label{lemma:innatefathom} The following $i_{\tilde A}, i_A$ all satisfy $i_{\tilde A}, i_A \not \subseteq i_{A^*}$:

\begin{enumerate}[(i)]
\item Any $i_{\tilde A}, i_A$ with

\begin{equation}\label{eq:nofullrank}
{\rm rank}\; \left[ 
\begin{array}{c}
J_{\tilde A} \\
C \end{array} \right] < \tilde n + n_C, \;\;\;
{\rm rank}\; \left[ 
\begin{array}{c}
J_{A} \\
C \end{array} \right] < n + n_C
\end{equation}

\noindent where $J_{\tilde A}$ (resp., $J_A$) is the Jacobian, at $x^*$, of the constraints defined by $i_{\tilde A}$ (resp., $i_A$).

\item Any $i_{\tilde A}, i_A$ that includes the pair of indices corresponding to the constraints

\vspace{-2mm}
$$
p_i(x_l) + f(x) \leq 0, \;\; p_j(x_l) + f(x) \leq 0,
$$

\noindent with $i \neq j$ denoting the indices of different pieces of the piecewise-concave function $p(x_l) = \mathop {\max} \limits_{k = 1,...,n_p} p_k (x_l)$, univariate in $x_l$, where

\vspace{-2mm}
$$
\exists k \in \{ 1,...,n_p \} : p_i (x_l) = p_j(x_l) \Rightarrow p_k(x_l) + f(x) > 0,
$$

\noindent or

\vspace{-2mm}
$$
p_i (x_l) = p_j(x_l) \Rightarrow x_l \not \in [\underline x_l, \overline x_l].
$$

\noindent Here, $\underline x_l$ and $\overline x_l$ are defined as $\mathop {\min} \{ x_l : x \in \mathcal{X} \}$ and $\mathop {\max} \{ x_l : x \in \mathcal{X} \}$, respectively.

\item Any $i_{\tilde A}, i_A$ that includes the pair of indices corresponding to the constraints 

\vspace{-2mm}
$$
p_i^+ (x) + f_1(x) \leq 0, \;\;\;p_j^- (x) - f_1(x) \leq 0,
$$

\noindent where $\mathop {\max} \limits_{i = 1,...,n_p^+} p_i^+ (x) < -f_2(x)$ and $\mathop {\max} \limits_{j = 1,...,n_p^-} p_j^- (x) < f_2(x)$ for all $x \in \mathcal{X}$. Here, $p^-$ is the strict piecewise-concave underapproximation of the function $f_2$ while $p^+$ is the strict piecewise-concave underapproximation of its negative.

\end{enumerate}

\end{lemma}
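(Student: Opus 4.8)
The plan is to prove all three parts by contradiction: in each case I would assume that the index set in question \emph{is} contained in $i_{A^*}$ — which, since $i_{A^*}$ collects the constraints active at the global minimizer $x^*$, forces every referenced constraint to hold with equality at $x^*$ — and then derive a contradiction, either with the structural properties of $x^*$ established in Section~\ref{sec:RCP} or with the membership $x^*\in\mathcal{X}$. For part (i) the relevant structural property is the one already exploited in the proof of Theorem~\ref{thm:suffcon}: $x^*$ is a basic solution of (\ref{eq:RCPprob}) at which LICQ holds (Assumption B4), so the gradients of the inequality constraints active at $x^*$, stacked together with the rows of $C$, are linearly independent; equivalently $[J_{A^*};\,C]$ has full row rank. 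If $i_{\tilde A}\subseteq i_{A^*}$, then $[J_{\tilde A};\,C]$ is obtained from $[J_{A^*};\,C]$ by deleting rows, so its rows remain linearly independent and its rank equals $\tilde n+n_C$; the identical argument applies to $i_A$ (with $J_A$ in place of $J_{\tilde A}$). Contraposing yields the stated rank conditions, so this part is essentially bookkeeping on top of results already in hand.

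For part (ii), I would first observe that if $i_{\tilde A}$ (or $i_A$) is contained in $i_{A^*}$ and contains the two indices in question, then both $p_i(x_l^*)+f(x^*)=0$ and $p_j(x_l^*)+f(x^*)=0$, so in particular $p_i(x_l^*)=p_j(x_l^*)$. In the first sub-case the hypothesis then produces a piece index $k$ with $p_k(x_l^*)+f(x^*)>0$; but $p_k(x_l)+f(x)\leq 0$ is itself one of the concave constraints of the RCP problem — every piece of the piecewise-concave decomposition $p=\max_k p_k$ contributes such a constraint — so this violates the feasibility of $x^*$. In the second sub-case the hypothesis gives $x_l^*\notin[\underline x_l,\overline x_l]$, which contradicts $x^*\in\mathcal{X}$ in view of the definitions $\underline x_l=\min\{x_l:x\in\mathcal{X}\}$ and $\overline x_l=\max\{x_l:x\in\mathcal{X}\}$.

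For part (iii), if both constraints are active at $x^*$ then $p_i^+(x^*)=-f_1(x^*)$ and $p_j^-(x^*)=f_1(x^*)$, whence $p_i^+(x^*)+p_j^-(x^*)=0$. On the other hand, each single piece is dominated at every point by the corresponding piecewise-concave function, so the strict-underapproximation hypotheses give $p_i^+(x^*)\leq\max_k p_k^+(x^*)<-f_2(x^*)$ and $p_j^-(x^*)\leq\max_k p_k^-(x^*)<f_2(x^*)$; summing these two strict inequalities gives $p_i^+(x^*)+p_j^-(x^*)<0$, the desired contradiction.

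In each part the conclusion $i_{\tilde A},i_A\not\subseteq i_{A^*}$ is precisely the negation of the assumption just refuted. I expect the only steps requiring any care to be part (i) — where one must cite the right ingredient, namely LICQ together with the basic-solution property, in order to conclude that $[J_{A^*};\,C]$ has full row rank — and the slightly fussy disjunction handled in part (ii); the algebra in part (iii) is immediate once the two active-constraint equalities are written down, so the main obstacle here is really one of careful exposition rather than any substantive difficulty.
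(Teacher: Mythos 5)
Your proposal is correct and follows essentially the same route as the paper's proof: part (i) is the contrapositive of the linear-independence of the active-constraint Jacobian guaranteed by Assumption B4, and parts (ii) and (iii) derive the same feasibility contradictions (via $p_i(x_l^*)=p_j(x_l^*)$ forcing either a violated piece constraint or $x_l^*\notin[\underline x_l,\overline x_l]$, and via summing the two active-constraint equalities against the strict underapproximation bounds). The extra details you supply --- the row-deletion argument for the rank claim and the observation that each piece $p_k(x_l)+f(x)\leq 0$ is itself a constraint of the RCP problem --- are consistent with, and merely flesh out, the paper's argument.
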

\begin{proof}

\begin{enumerate}[(i)]
\item From Assumption B4, it is required that the Jacobian of the active inequality and equality constraints at $x^*$ have full rank, which is precluded by (\ref{eq:nofullrank}) as this implies linear dependence in some of the components.

\item Both constraints being active at $x^*$ implies $p_i(x_l^*) = p_j(x_l^*)$, which in turn either implies $p_k (x_l^*) + f(x^*) > 0$ or $x_l^* \not \in [\underline x_l, \overline x_l]$, both of which contradict the feasibility of $x^*$.

\item By contradiction, suppose that both $p_i^+ (x) + f_1(x) \leq 0$ and $p_j^- (x) - f_1(x) \leq 0$ are active at $x^*$:

\vspace{-2mm}
\begin{equation}\label{eq:spliteq}
\begin{array}{l}
p_i^+ (x^*) + f_1(x^*) = 0 \\
p_j^- (x^*) - f_1(x^*) = 0 
\end{array} \Rightarrow p_i^+ (x^*) + p_j^- (x^*) = 0.
\end{equation}

However, from $\mathop {\max} \limits_{i = 1,...,n_p^+} p_i^+ (x) < -f_2(x)$ and $\mathop {\max} \limits_{j = 1,...,n_p^-} p_j^- (x) < f_2(x)$, it follows that

$$
\begin{array}{l}
p_i^+ (x^*) < -f_2(x^*) \\
p_j^- (x^*) < f_2(x^*) 
\end{array} \Rightarrow p_i^+ (x^*) + p_j^- (x^*) < 0,
$$

\noindent which contradicts (\ref{eq:spliteq}). \qed

\end{enumerate}

\end{proof}

In extended RCP language, the results of Lemma \ref{lemma:innatefathom} have the following respective interpretations:

\begin{enumerate}[(i)]
\item Any combination of linear inequality and equality constraints that are linearly dependent may be fathomed. This is particularly relevant for bound constraint pairs (i.e., $x_i^L \leq x_i$ and $x_i \leq x_i^U$ for $i=1,...,n$) and means that the fathoming basis $F$ may always be initialized as $F := \left[ \begin{array}{ccc}
{\bf 0}_{n \times (n_g-2n)} & I_{n} & I_{n} 
\end{array} \right]$.

\item The combination of non-adjacent pieces in a piecewise-concave approximation where each piece is only maximal over a single continuous interval may be fathomed (see Fig. \ref{fig:ccvinter} for an illustration). Note that both Algorithm~\ref{algo:pwapp} and the standard piecewise-linear approximation of a strict univariate convex function enforce such an approximation.
\item When a univariate nonlinear equality constraint (e.g., ${\rm sin}\;y_1 - z_1 = 0$ in (\ref{eq:decomp2})) is split with both parts strictly underapproximated, the pairs coming from the different approximations cannot define together an optimal active set and may be fathomed. This is equivalent to saying that a strict overapproximation of a function cannot intersect a strict underapproximation of the same function.
\end{enumerate}

\begin{figure*}
\begin{center}
\includegraphics[width=.8\textwidth]{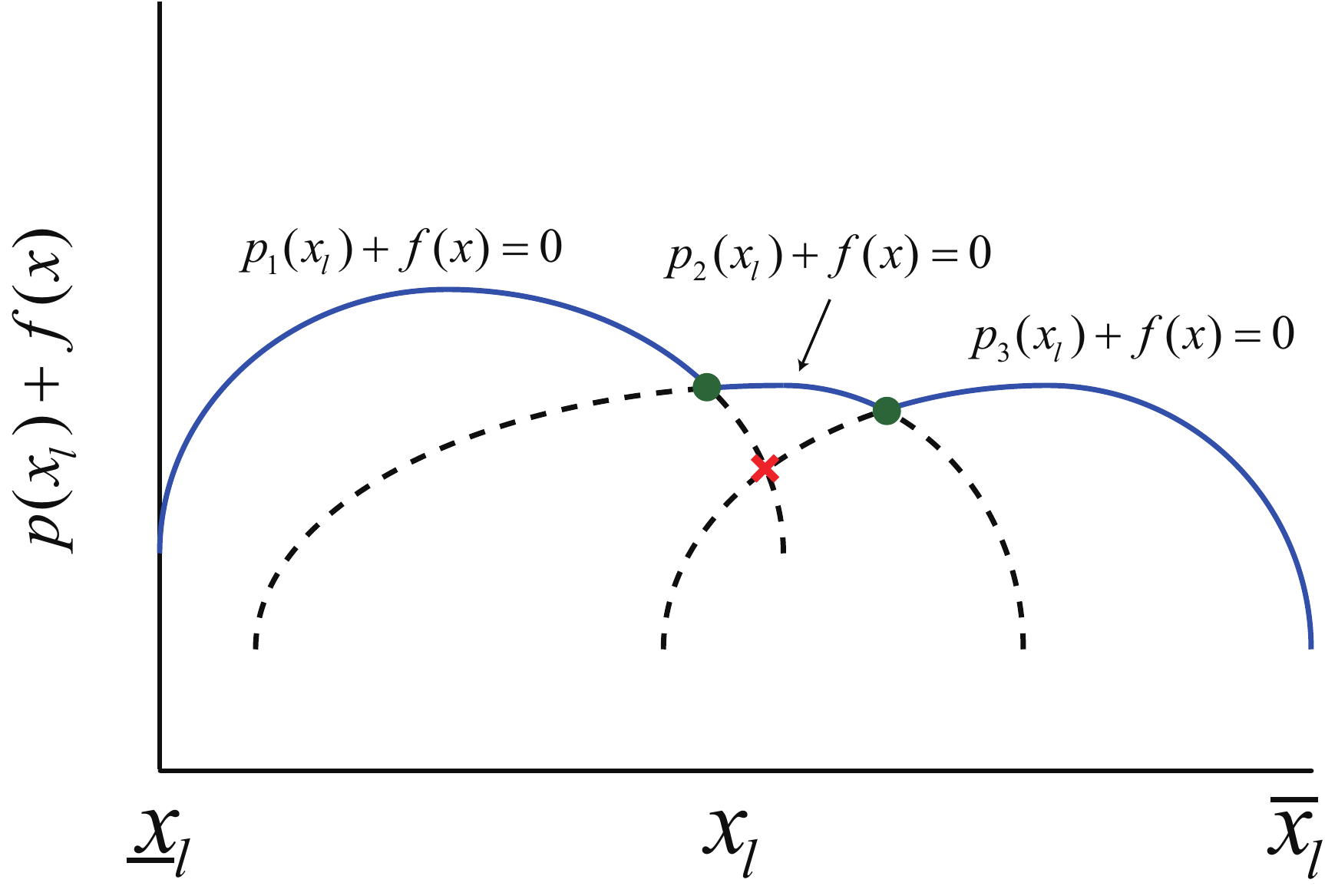}
\caption{Illustration of how a crossing between non-adjacent concave constraints of a piecewise-concave function is bound to lie in the infeasible region when each piece is maximal only over a single interval. Here, the cross designates the intersection between $p_1(x_l)+f(x) = 0$ and $p_3(x_l)+f(x) = 0$, seen to be infeasible, while the two round points indicate the feasible intersections of adjacent constraints $p_1(x_l)+f(x) = p_2(x_l)+f(x)$ and $p_2(x_l)+f(x) = p_3(x_l)+f(x)$.}
\label{fig:ccvinter}
\end{center}
\end{figure*}

\vspace{1mm}
\noindent {\it Fathoming Bound Constraint Combinations}
\vspace{1mm}

A simple way to ensure that Assumption B3 is satisfied is to set lower and upper limits -- $x^L$ and $x^U$, respectively -- on all of the decision variables in the RCP problem. As demonstrated in the proof of Theorem \ref{thm:mainapprox}, such limits exist implicitly for the auxiliary variables $z_{all}$ when Assumptions A1 and A2 hold. The auxiliary epigraph variable $t$ may also be bounded as $\mathop {\min} \{ f_{NL} (y) : y \in \mathcal{Y} \} \leq t \leq \mathop {\max} \{ f_{NL} (y) : y \in \mathcal{Y} \}$. For the sake of simplicity, the algorithm presented in this work \emph{requires} these constraints for all of the variables, not only because this validates Assumption B3, but also because it simplifies the presentation of certain subroutines, which depend on $\mathcal{X}$ being bounded.

The $2n$ bound constraints should, of course, be included in the enumerative search and the active subsets that include their possible combinations should be considered. However, all points corresponding to a given active subset of bound constraints may be proven infeasible, and the constraint combination thereby fathomed, via a cheap computational procedure when $\mathcal{C}$ is a convex polytope defined by a set of linear inequalities

\vspace{-2mm}
$$
\mathcal{C} = \{ x : A_{\mathcal{C},i.} x \leq b_{\mathcal{C},i},\; i = 1,...,n_\mathcal{C} \}.
$$

\noindent The methods proposed in this work will deal exclusively with sets $\mathcal{C}$ having this form.

This procedure is possible since choosing a subset of bound constraints fixes the corresponding variables and allows for a simple minimization of any linear constraint over the rest. Without loss of generality, let $x_1,...,x_{\tilde {\tilde n}}$ denote the $\tilde {\tilde n}$ variables whose bound constraints have been fixed and let $x_{\tilde {\tilde n} + 1},...,x_{n}$ denote the others. Defining 

\vspace{-2mm}
$$
\mathcal{D} = \{ x : x_i^L \leq x_i \leq x_i^U, \; i = 1,...,n \},
$$

\noindent suppose, for the purpose of illustration, that one wants to check if the subset $i_{\tilde A}$ with the first $\tilde {\tilde n}$ lower bound constraints active can be fathomed. To do this, one may minimize the linear portion of each constraint in $\mathcal{C}$ as follows:

\vspace{-2mm}
\begin{equation}\label{eq:minlin}
\begin{array}{rl}
\mathop {\rm minimize}\limits_{x \in \mathcal{D}} & A_{\mathcal{C},i.} x \\
{\rm{subject}}\;{\rm{to}} & x_j = x_j^L, \;\; \forall j = 1,...,{\tilde {\tilde n}},
\end{array}
\end{equation}

\noindent and then check if the objective value is strictly superior to $b_{\mathcal{C},i}$. If so, then it follows that any point in $\mathcal{D}$ with the first $\tilde {\tilde n}$ lower bound constraints active will fail to satisfy $A_{\mathcal{C},i.} x \leq b_{\mathcal{C},i}$, thereby implying that these bound constraints could not define $x^*$ since $x^* \in \mathcal{C}$. This check is carried out for all $i = 1,...,n_{\mathcal{C}}$, with the failure to satisfy any one constraint sufficient to fathom the bound constraint combination.

It is easily shown that the minimal objective value of (\ref{eq:minlin}) has the analytical expression

\vspace{-2mm}
$$
\sum_{j=1}^{\tilde {\tilde n}} A_{\mathcal{C},ij} x_j^L + \sum_{j=\tilde {\tilde n} + 1}^{n} \mathop {\min} \left[ A_{\mathcal{C},ij} x_j^L,\; A_{\mathcal{C},ij} x_j^U \right].
$$

Although this method can provide useful fathoming information at a very low price, running this check for all possible active sets and subsets generated by the bound constraints may nevertheless be computationally expensive. A scheme is therefore proposed to check the different sets in a branching manner that is similar in nature to the general active-set enumeration. As in the general enumeration, subsets from lowest to highest cardinality are built, with each node split in two by activating both the lower and upper bounds of an additional variable and checking if the resulting subsets may be proven infeasible. To ensure that the algorithm terminates fairly quickly, early termination is enforced if the number of nodes grows too large. While this heuristic rule does not rigorously guarantee termination within a certain number of operations, it has been noted to be sufficient in practice, since the number of nodes usually either explodes or stays at reasonable levels, with the algorithm tending to terminate quickly in the latter case.

\vspace{1mm}
\noindent {\bf Subroutine A (Fathoming bound constraints)}
\vspace{1mm}

\noindent {\bf User input}: $\mathcal{C}$, $\mathcal{D}$, $F$, and $M$, with $M$ being the upper limit on the number of nodes.

\noindent {\bf Output}: $F$ (updated).

\begin{enumerate}
\item (Initialize tree) Set $ B := {\bf 0}_{1 \times 2n}$.
\item (Check if search has exhausted all nodes) If $B = \varnothing$, terminate. Otherwise, go to Step 3.
\item (Cardinality and definition of active bound constraints for first node) Set $[\underline b_c^1 \; \overline b_c^1] := B_{1.}$, with $\underline b_c^1,\overline b_c^1 \in \mathbb{R}^{1 \times n}$ containing the first and last $n$ elements of $B_{1.}$, respectively. Remove the first row of $B$ and set $b_c := \underline b_c^1 + \overline b_c^1$. Let $\tilde {\tilde n}$ denote the index of the last non-zero element of $b_c$, with $\tilde {\tilde n} := 0$ if $b_c = {\bf 0}$. If $\tilde {\tilde n} = n$, return to Step 2. Otherwise, proceed to Step 4.
\item (Branching and fathoming) For $k := \tilde {\tilde n} + 1,...,n$:
	\begin{enumerate}
	\item Define $\underline b_c$ and $\overline b_c$ as the vectors $\underline b_c^1$ and $\overline b_c^1$ with their $k^{\rm th}$ indices set to 1.
	\item If

\vspace{-2mm}
\begin{equation}\label{eq:extra1}
\begin{array}{l}
\exists i \in \{ 1,...,n_\mathcal{C} \} : \displaystyle \sum_{j : \underline b_{c,j} = 1} A_{\mathcal{C},ij} x_j^L + \sum_{j : \overline b_{c,j}^1=1} A_{\mathcal{C},ij} x_j^U \vspace{1mm} \\
\hspace{25mm}+ \displaystyle \sum_{j : \underline b_{c,j}, \overline b_{c,j}^1 = 0} \mathop {\min} \left[ A_{\mathcal{C},ij}  x_j^L, \; A_{\mathcal{C},ij} x_j^U \right] > b_{\mathcal{C},i}
\end{array}
\end{equation}

	and 

\begin{equation}\label{eq:extra2}
\not \exists i : F_{ i.} = [{\bf 0}_{1 \times (n_g - 2n)} \;\; \underline b_{c} \;\; \overline b_{c}^1],
\end{equation}

\noindent then the active subset corresponding to $\underline b_{c}$ and $\overline b_{c}^1$ cannot define $x^*$ and may be appended to the fathoming basis:

\vspace{-2mm}
$$
F := \left[  \begin{array}{c}  F \\  {\bf 0}_{1 \times (n_g - 2n)} \;\; \underline b_{c} \;\; \overline b_{c}^1  \end{array}   \right].
$$

\noindent If neither (\ref{eq:extra1}) nor (\ref{eq:extra2}) is true, then this branch should be grown and explored further: 

\vspace{-2mm}
$$
B := \left[   \begin{array}{c} B \\ \underline b_{c} \;\; \overline b_{c}^1  \end{array}   \right].
$$

Likewise, if

\vspace{-2mm}
\begin{equation}\label{eq:extra3}
\begin{array}{l}
\exists i \in \{ 1,...,n_\mathcal{C} \} : \displaystyle \sum_{j : \underline b_{c,j}^1 = 1} A_{\mathcal{C},ij} x_j^L + \sum_{j : \overline b_{c,j}=1} A_{\mathcal{C},ij} x_j^U \vspace{1mm} \\
\hspace{25mm}+ \displaystyle \sum_{j : \underline b_{c,j}^1, \overline b_{c,j} = 0} \mathop {\min} \left[ A_{\mathcal{C},ij} x_j^L,\; A_{\mathcal{C},ij} x_j^U \right] > b_{\mathcal{C},i}
\end{array}
\end{equation}

\noindent and 

\begin{equation}\label{eq:extra4}
\not \exists i : F_{i.} = [{\bf 0}_{1 \times (n_g - 2n)} \;\; \underline b_{c}^1 \;\; \overline b_{c}],
\end{equation}

\noindent then the active subset corresponding to $\underline b_{c}^1$ and $\overline b_{c}$ cannot define $x^*$ and may be added to the fathoming basis:

\vspace{-2mm}
$$
F := \left[  \begin{array}{c}  F \\  {\bf 0}_{1 \times (n_g - 2n)} \;\;\underline b_{c}^1 \;\; \overline b_{c} \end{array}   \right].
$$

\noindent If neither (\ref{eq:extra3}) nor (\ref{eq:extra4}) is true, then this branch should be grown and explored further:

\vspace{-2mm}
$$
B := \left[   \begin{array}{c} B \\ \underline b_{c}^1 \;\; \overline b_{c}  \end{array}   \right].
$$

	\end{enumerate}
\item (Heuristic termination rule if too many nodes) If the number of rows in $B$ is superior to $M$, terminate. Otherwise, return to Step 2. 
\end{enumerate}

\vspace{1mm}
\noindent {\it Fathoming Separable Concave Constraints}
\vspace{1mm}

Let

\vspace{-2mm}
$$
\mathcal{D}_{\mathcal{C}} = \{ x : x_{\mathcal{C},i}^L \leq x_{i} \leq x_{\mathcal{C},i}^U, \;\; i = 1,...,n \}
$$

\noindent denote the box defined by the $\mathcal{C}$-induced bounds $x_{\mathcal{C},i}^L = \mathop {\min} \{ x_i : x \in \mathcal{C} \}$ and $x_{\mathcal{C},i}^U = \mathop {\max} \{ x_i : x \in \mathcal{C} \}$. Clearly, $\mathcal{C} \subseteq \mathcal{D}_{\mathcal{C}}$.

One may prove the inactivity of a given concave constraint over $\mathcal{D}_{\mathcal{C}}$, and thereby $\mathcal{C}$, by computing its maximum value on $\mathcal{D}_{\mathcal{C}}$ and showing that it is strictly inferior to 0. This may be done in the general nonseparable case by solving a single convex optimization problem, and indeed this is what happens in (\ref{eq:feascheck}) when $\# i_{\tilde A} = 1$. However, a faster check may be performed for the separable case since, for a given $g_i$,

\vspace{-2mm}
\begin{equation}\label{eq:maxccv}
\mathop {\max} \limits_{x \in {\mathcal{C}}}\; g_i(x) \leq \mathop {\max} \limits_{x \in \mathcal{D}_{\mathcal{C}}}\; g_i(x) = g_{i0} + \sum_{j=1}^n \mathop {\max} \limits_{x_j \in [x_{\mathcal{C},j}^L, x_{\mathcal{C},j}^U]} g_{ij}(x_j),
\end{equation}

\noindent with $g_{ij}$ denoting the univariate components and $g_{i0}$ in particular denoting the constant term. Since each component is a univariate concave function on a closed interval, each $g_{ij}$ must reach its maximum at either $x_{\mathcal{C},j}^L$, $x_{\mathcal{C},j}^U$, or a stationary point where $dg_{ij}/dx_j = 0$. As checking these cases $n$ times is significantly cheaper than maximizing $g_i$ over $\mathcal{C}$, (\ref{eq:maxccv}) offers an easy way to quickly fathom $g_i$ if it is irrelevant.

\vspace{1mm}
\noindent {\bf Subroutine B (Fathoming separable concave constraints)}
\vspace{1mm}

\noindent {\bf User input}: $\mathcal{D}_{\mathcal{C}}$, $g$, $F$.

\noindent {\bf Output}: $F$ (updated).

\;

\noindent For $i = \{ 1,...,n_g-2n \} \setminus \{ i : g_i\; {\rm not\; separable} \}$:
\begin{enumerate}
\item (Maximize univariate components) For $j = 1,...,n$, compute

\vspace{-2mm}
$$
\begin{array}{l}
\mathop {\max} \limits_{x_j \in [x_{\mathcal{C},j}^L, x_{\mathcal{C},j}^U]} g_{ij}(x_j) = \mathop {\max} \left[ g_{ij} (x_{\mathcal{C},j}^L), g_{ij} (x_{\mathcal{C},j}^U), g_{ij} (x_j^0) \right] \vspace{1mm} \\
\displaystyle x_j^0 \in \left\{ x_j : \frac{dg_{ij}}{dx_j} \Big |_{x_j} = 0, \; x_{\mathcal{C},j}^L \leq x_j \leq x_{\mathcal{C},j}^U \right\},
\end{array}
$$

\noindent with $g_{ij} (x_j^0) := -\infty$ if no $x_j^0$ satisfying the stationarity condition and $x_{\mathcal{C},j}^L \leq x_j^0 \leq x_{\mathcal{C},j}^U$ exists.

\item (Check potential constraint activity over $\mathcal{D}_{\mathcal{C}}$) If

\vspace{-2mm}
$$
g_{i0} + \sum_{j=1}^n \mathop {\max} \limits_{x_j \in [x_{\mathcal{C},j}^L, x_{\mathcal{C},j}^U]} g_{ij}(x_j) < 0,
$$

\noindent and $\not \exists j : F_{j.} = [{\bf 0}_{1 \times (i-1)} \;\; 1 \;\; {\bf 0}_{1 \times ({n_g-2n-i})} \;\;$ $ {\bf 0}_{1 \times 2n} ]$, then set

\vspace{-2mm}
$$
F := \left[ \begin{array}{c}
F \\ {\bf 0}_{1 \times (i-1)} \;\; 1 \;\; {\bf 0}_{1 \times ({n_g-2n-i})} \;\; {\bf 0}_{1 \times 2n}
\end{array} \right].
$$

\end{enumerate}

\vspace{1mm}
\noindent {\it Mandatory Constraints}
\vspace{1mm}

Certain constraints are known to be active at $x^*$ by inspection, and should thus be included in every $i_{\tilde A}$ and $i_A$ considered (i.e., they are ``mandatory''). A very common occurrence of such constraints arises during the epigraph transformation of the cost as shown in (\ref{eq:mainprobeq}), where the added constraint must be active at an optimum, with at least one of the constraints obtained following the decomposition/approximation of this constraint also forced to be active. This may be seen as an implicit fathoming technique since it removes from consideration those sets that do not contain the mandatory constraints without attempting to list all such sets explicitly in $F$.

\subsubsection{General Domain Reduction Techniques}

Domain reduction techniques \cite{Caprara:10} are standard in several of the currently available global optimization solvers \cite{Neumaier:05}, and have been credited for reducing the computational effort of a complete global search significantly \cite{Ryoo:96,Zamora:99}. This is no different in extended RCP, where the use of domain reduction techniques to shrink $\mathcal{D}_{\mathcal{C}}$ and, consequently, $\mathcal{C}$ can make solution times orders of magnitude faster. The particular characteristic of the RCP enumeration with respect to domain reduction techniques is that domain reduction allows for more subsets to be fathomed earlier in the search.

A fairly general domain reduction routine, with a slight modification, is presented first.

\vspace{1mm}
\noindent {\bf Subroutine C (Domain reduction)}
\vspace{1mm}

\noindent {\bf User input}: $\mathcal{D}_{\mathcal{C}}$, $\mathcal{C}$, $X_\mathcal{C}$, and $\epsilon_X$. The matrix $X_\mathcal{C} \in \mathbb{R}^{n \times 2n}$ contains the $2n$ coordinates corresponding to the points where the $n$ variables reach their lower and upper bounds on $\mathcal{C}$. The tolerance $\epsilon_X > 0$ is used to define the termination criterion.

\noindent {\bf Output}: $\mathcal{D}_{\mathcal{C}}$ (updated), $\mathcal{C}$ (updated), $X_\mathcal{C}$ (updated).

\begin{enumerate}
\item (Compute initial coordinates corresponding to $x_{\mathcal{C}}^L, x_{\mathcal{C}}^U$) If $X_\mathcal{C} \neq \varnothing$, go to Step 3. Otherwise, for each $i = 1,...,n$, compute the point corresponding to the minimum value that $x_i$ can take on $\mathcal{C}$:

\vspace{-2mm}
\begin{equation}\label{eq:minx}
\begin{array}{rl}
\underline x_\mathcal{C} \in {\rm arg} \mathop {\rm minimize}\limits_{x} & x_i \\
{\rm{subject}}\;{\rm{to}} & x \in \mathcal{C}
\end{array}
\end{equation}

\noindent and set $x_{\mathcal{C},i}^L := \underline x_{\mathcal{C},i}$. Likewise, compute the point corresponding to the maximum value that $x_i$ can take on $\mathcal{C}$:

\vspace{-2mm}
\begin{equation}\label{eq:maxx}
\begin{array}{rl}
\overline x_\mathcal{C} \in {\rm arg} \mathop {\rm maximize}\limits_{x} & x_i \\
{\rm{subject}}\;{\rm{to}} & x \in \mathcal{C}
\end{array}
\end{equation}

\noindent and set $x_{\mathcal{C},i}^U := \overline x_{\mathcal{C},i}$. Set $X_{\mathcal{C},i.} := [\underline x_\mathcal{C}^T \;\; \overline x_\mathcal{C}^T]$.

\item (Update $\mathcal{C}$) Use the updated $\mathcal{D}_{\mathcal{C}}$ to update $\mathcal{C}$ accordingly. Set $X_{\mathcal{C}}^0 := X_{\mathcal{C}}$.
\item (Re-compute coordinates corresponding to $x_{\mathcal{C}}^L, x_{\mathcal{C}}^U$) For $i = 1,...,n$:
\begin{enumerate}
\item Set $[\underline x_\mathcal{C}^T \;\; \overline x_\mathcal{C}^T ] := X_{\mathcal{C},i.}$, with $\underline x_\mathcal{C}^T, \overline x_\mathcal{C}^T \in \mathbb{R}^n$ vectors corresponding to the first and last $n$ columns of $X_{\mathcal{C},i.}$, respectively.
\item If $\underline x_\mathcal{C} \in \mathcal{C}$, proceed to Step 3c. Otherwise, reorder the constraints of $\mathcal{C}$ so that

\vspace{-2mm}
$$
A_{\mathcal{C},1.} \underline x_\mathcal{C} - b_{\mathcal{C},1} \geq A_{\mathcal{C},2.} \underline x_\mathcal{C} - b_{\mathcal{C},2} \geq ... \geq A_{\mathcal{C},n_\mathcal{C}.} \underline x_\mathcal{C} - b_{\mathcal{C},n_\mathcal{C}}.
$$

\begin{enumerate}[(i)]
\item Set $\tilde n_\mathcal{C} := n - n_C$.
\item Construct

\vspace{-2mm}
\begin{equation}\label{eq:Ab}
A_{\mathcal{C},e} : = \left[ {\begin{array}{*{20}c}
   C \\
   A_{\mathcal{C},1.}   \\
    \vdots   \\
   A_{\mathcal{C},\tilde n_\mathcal{C}.}  \\
\end{array}} \right],\;\;\;b_{\mathcal{C},e} : = \left[ {\begin{array}{*{20}c}
    d \\
    b_{\mathcal{C},1}  \\
    \vdots   \\
   b_{\mathcal{C},\tilde n_\mathcal{C} }  \\
\end{array}} \right].
\end{equation}

\item If ${\rm rank}\; A_{\mathcal{C},e} < n$, set $\tilde n_\mathcal{C} := \tilde n_\mathcal{C} + 1$ and return to (ii). Otherwise, set $\underline x_\mathcal{C} := A_{\mathcal{C},e}^{\dagger} b_{\mathcal{C},e}$.
\item If $\underline x_\mathcal{C} \in \mathcal{C}$, check that $\underline x_\mathcal{C}$ is an optimal solution of (\ref{eq:minx}) by verifying the stationarity condition $\exists \lambda \in \mathbb{R}^{\tilde n_\mathcal{C}}_+, \mu \in \mathbb{R}^{n_{C}} : [{\bf 0}_{1 \times (i-1)} \;\; 1 \;\; {\bf 0}_{1 \times (n-i)} ] + \sum_{i=1}^{\tilde n_\mathcal{C}} \lambda_i A_{\mathcal{C},i.} + \sum_{i=1}^{n_{C}} \mu_i C_{i.} = {\bf 0}$. A cheap way to do this is by taking the pseudoinverse to solve for the Lagrange multipliers. If this cannot be verified, or if $\underline x_\mathcal{C} \not \in \mathcal{C}$, re-compute $\underline x_\mathcal{C}$ by solving (\ref{eq:minx}). Set $x_{\mathcal{C},i}^L := \underline x_{\mathcal{C},i}$.
\end{enumerate}
\item If $\overline x_\mathcal{C} \in \mathcal{C}$, proceed to Step 3d. Otherwise, reorder the constraints of $\mathcal{C}$ so that

\vspace{-2mm}
$$
A_{\mathcal{C},1.} \overline x_\mathcal{C} - b_{\mathcal{C},1} \geq A_{\mathcal{C},2.} \overline x_\mathcal{C} - b_{\mathcal{C},2} \geq ... \geq A_{\mathcal{C},{n_\mathcal{C}.}} \overline x_\mathcal{C} - b_{\mathcal{C},{n_\mathcal{C}}}.
$$

\begin{enumerate}[(i)]
\item Set $\tilde n_\mathcal{C} := n - n_C$.
\item Construct $A_{\mathcal{C},e}$ and $b_{\mathcal{C},e}$ as in (\ref{eq:Ab}).
\item If ${\rm rank}\; A_{\mathcal{C},e} < n$, set $\tilde n_\mathcal{C} := \tilde n_\mathcal{C} + 1$ and return to (ii). Otherwise, define $\overline x_\mathcal{C} := A_{\mathcal{C},e}^{\dagger} b_{\mathcal{C},e}$.
\item If $\overline x_\mathcal{C} \in \mathcal{C}$, check that $\overline x_\mathcal{C}$ is an optimal solution of (\ref{eq:maxx}) by verifying the stationarity condition $\exists \lambda \in \mathbb{R}^{\tilde n_\mathcal{C}}_+, \mu \in \mathbb{R}^{n_C} : [{\bf 0}_{1 \times (i-1)} \;\; -1 \;\; {\bf 0}_{1 \times (n-i)} ] + \sum_{i=1}^{\tilde n_\mathcal{C}} \lambda_i A_{\mathcal{C},i.} + \sum_{i=1}^{n_C} \mu_i C_{i.} = {\bf 0}$. If this cannot be verified, or if $\overline x_\mathcal{C} \not \in \mathcal{C}$, re-compute $\overline x_\mathcal{C}$ by solving (\ref{eq:maxx}). Set $x_{\mathcal{C},i}^U := \overline x_{\mathcal{C},i}$.
\end{enumerate}
\item Set $X_{\mathcal{C},i.} := [\underline x_\mathcal{C}^T \;\; \overline x_\mathcal{C}^T]$.
\end{enumerate}
\item (Termination) If $\| X_{\mathcal{C}} - X_{\mathcal{C}}^0 \|_{\rm max} < \epsilon_X$, terminate. Otherwise, return to Step 2.
\end{enumerate}

The above subroutine essentially updates the box $\mathcal{D}_{\mathcal{C}}$ by solving linear programming (LP) problems to compute the minimal and maximal bounds on the individual variables, which are then used to redefine and shrink $\mathcal{C}$. The aforementioned ``slight modification'' comes via storing the old solution points in $X_{\mathcal{C}}$ and using them whenever possible to avoid solving (\ref{eq:minx}) and (\ref{eq:maxx}), either by (a) verifying that the old point is still inside $\mathcal{C}$ and thus does not need updating or by (b) projecting the old point on the ``most active'' constraints in hopes of this being the active set that would solve (\ref{eq:minx}) or (\ref{eq:maxx}) by simple pseudoinversion. It should be mentioned that storing $X_\mathcal{C}$ may reduce the computational burden of (\ref{eq:minx}) and (\ref{eq:maxx}) significantly as well, as it provides a warm start for what are already LP problems. Other techniques may provide further computational speed-ups (see, e.g., \cite{Zamora:99,Tawarmalani:04}), but have not been considered in this work.

Of crucial importance in Subroutine C is the ``update $\mathcal{C}$ accordingly'' in Step 2, which is pertinent since the definition of $\mathcal{C}$ will in general be dependent on the definition of $\mathcal{D}_{\mathcal{C}}$. Noting that $Cx = d$, any linear $g_i$, and the box $\mathcal{D}$ may be incorporated into $\mathcal{C}$ directly, the other elements that may contribute to the definition of $\mathcal{C}$ -- namely, convex underestimators and cutting planes -- are now discussed in some detail.

\vspace{2mm}
\noindent {\it Convex Underestimators}
\vspace{1mm}

Denote by $l_i(x) \leq g_i (x), \forall x \in \mathcal{D}_{\mathcal{C}}$ a linear underestimator of the (nonlinear) concave constraint $g_i(x) \leq 0$ over $\mathcal{D}_{\mathcal{C}}$. It is well known that the efficiency of such underestimators will depend on the degree of nonlinearity of $g_i$ as well as on the size of $\mathcal{D}_{\mathcal{C}}$. Incorporating $l_i(x) \leq 0$ into $\mathcal{C}$ therefore makes the iterative domain reduction as described in Subroutine~C possible, as tightening $\mathcal{D}_{\mathcal{C}}$ makes the constraint $l_i(x) \leq 0$ more restricting, which in turn allows for further tightening of $\mathcal{D}_{\mathcal{C}}$, and so on. Two particular cases -- arguably the two most relevant ones in the extended RCP methodology -- are addressed here.

The first corresponds to the case where $g_i$ is separable, as this allows one to construct a convex (linear) underestimator of $g_i$ by constructing the convex (linear) underestimators of its univariate components, $g_{ij}$. As all of these components are concave, the convex underestimator of each $g_{ij}$ is simply the line segment joining the points $(x_{\mathcal{C},j}^L, g_{ij}(x_{\mathcal{C},j}^L))$ and $(x_{\mathcal{C},j}^U, g_{ij}(x_{\mathcal{C},j}^U))$. Their sum then gives the underestimator of $g_i$~\cite{Falk:69}.

The second case of interest is that of the univariate piecewise-concave function as generated by Algorithm 1. It is not difficult to show that the convex underestimator of such a function is simply the (piecewise-linear) convex underestimator of the intersection points of the adjacent pieces, together with the points corresponding to the lower and upper boundaries $x_{\mathcal{C}}^L$ and $x_{\mathcal{C}}^U$. A number of algorithms designed for the more general problem of computing the convex hull of a planar set are readily applicable to compute the convex underestimator here. The algorithm employed in this work was a modified version of Graham's method \cite{Graham:72}.

\vspace{2mm}
\noindent {\it Local Minimization and Cutting Planes}
\vspace{1mm}

Given some feasible point $x_0$, it is often reasonable to put in the computational effort for a local optimization so as to bring this point to a local minimum, $x_{loc}^*$, of the RCP problem. The resulting point, in some cases already the global minimum, then gives an upper bound on the globally optimal cost value and allows for the cutting plane constraint $c^T x \leq c^T x_{loc}^*$ to be added to $\mathcal{C}$. In the author's experience, this is arguably the most important constraint with respect to the domain reduction, as finding a very good upper bound on the cost tends to lead to drastic reductions in $\mathcal{D}_{\mathcal{C}}$.

\vspace{1mm}
\noindent {\bf Subroutine D (Local minimization)}
\vspace{1mm}

\noindent {\bf User input}: $x_0$ (optional), $x_{up}$ (optional), $U$, $\mathcal{D}_{\mathcal{C}}$, $\mathcal{C}$, $g_i$, $C$, and $d$, where $U$ is an upper bound on the random samples used to find $x_0$ if it is not provided, while $x_{up}$ is the best known point satisfying $c^T x^* \leq c^T x_{up}$.

\noindent {\bf Output}: $\mathcal{C}$ (updated), $x_{up}$ (updated).

\begin{enumerate}
\item (Search for a feasible point) If a feasible $x_0$ is provided, proceed to Step 2. Otherwise, randomly sample $\mathcal{D}_{\mathcal{C}}$ until (a) a feasible $x_0$ is found or (b) $U$ samples have failed to find a feasible point. In the case of (a), proceed to Step 2. Otherwise, terminate.
\item (Local minimization of the RCP problem) Initialize a local solver at $x_0$ and solve (\ref{eq:RCPprob}) to local optimality to obtain $x_{loc}^*$.
\item (Updating the cost cutting plane) If $c^T x_{loc}^* < c^T x_{up}$, set $x_{up} := x_{loc}^*$ and replace the cost cutting plane in $\mathcal{C}$ with $c^T x \leq c^T x_{loc}^*$. If no $x_{up}$ was provided, then simply add this constraint to $\mathcal{C}$.

\end{enumerate}

\subsection{Detailed Outline of the Proposed RCP Algorithm}
\label{sec:algo3}

Bringing together the ideas of the previous two subsections, the entire algorithm is now presented.

\begin{algorithm}[Enumerative RCP algorithm]\label{algo:RCPfull}

\noindent {\bf User input}: $c$, $g_i$, $C$, $d$, $\mathcal{C}$, $\mathcal{D}$, $M$, $\mathcal{D}_{\mathcal{C}}$, $U$, $F$, $\epsilon_X$, $\epsilon_g$, and $\epsilon$. The matrix $F$ should be populated according to the innate fathoming rules described in Section \ref{sec:algo2}. The tolerances $\epsilon_g > 0$ and $\epsilon > 0$ correspond to acceptable constraint violations and suboptimality, respectively. It is assumed that both $Cx = d$ and $x \in \mathcal{D}$ are included in the definition of $\mathcal{C}$.

\noindent {\bf Output}: $X^*$ (the matrix of solution candidates).

\begin{enumerate}
\item (Initialization) Set $N := 0$ as the counter for the optimization problems solved. Set $X^*, V := \varnothing$. Set $X_V := \varnothing$ as the matrix of points corresponding to the different members of $V$. Set $S := [{\bf 1}_{1 \times n_m}\; {\bf 0}_{1 \times (n_g-n_m)}]$ as the binary matrix corresponding to the candidate active sets and subsets -- initially a vector with only the $n_m$ mandatory constraints accounted for. Set $s_{low} := 0$ as the vector of lower bounds corresponding to the constraint sets in $S$, initially a scalar with the dummy value of 0 that serves as a place holder. Set $X_\mathcal{C} := \varnothing$.
\item (Initial upper bound) Run Subroutine D with no $x_0, x_{up}$ provided (unless a feasible point is somehow known, in which case use this point as $x_0$), and augment the counter as $N := N + 1$. If Subroutine D fails to find a feasible point, set $x_{up}$ as

\vspace{-2mm}
$$
\begin{array}{rl}
x_{up} \in {\rm arg} \mathop {\rm maximize}\limits_{x} & c^T x \\
{\rm{subject}}\;{\rm{to}} & x \in \mathcal{C}
\end{array}
$$

\noindent and augment the counter as $N := N + 1$.

\item (Initial domain reduction) Run Subroutine C and augment the counter as $N := N + N_C$, where $N_C$ is the number of times that Problems (\ref{eq:minx}) or (\ref{eq:maxx}) are solved.
\item (Fathoming of bound and separable constraints) Run Subroutines A and B.
\item (Solving the convex relaxation) Solve the relaxed problem

\vspace{-2mm}
\begin{equation}\label{eq:cvxrelax}
\begin{array}{rl}
x_{low} \in {\rm arg} \mathop {\rm minimize}\limits_{x} & c^T x \\
{\rm{subject}}\;{\rm{to}} & x \in \mathcal{C},
\end{array}
\end{equation}

\noindent and augment the counter as $N := N + 1$. If $\mathop {\max} \limits_{i = 1,...,n_g} g_i (x_{low}) \leq \epsilon_g$, then terminate and declare $X^* := x_{low}^T$. Alternatively, if $c^T x_{up} - c^T x_{low} \leq \epsilon$, terminate and declare $X^* := x_{up}^T$.

\item (Setting the node to branch on) If $S = \varnothing$, terminate. Otherwise, set $\tilde s^1 := S_{1.}$. Remove the first row of $S$ and the first element of $s_{low}$. Denote by $\tilde n_g$ the index of the last non-zero element of $\tilde s^1$, setting $\tilde n_g := 0$ if $\tilde s^1 = {\bf 0}$. If $\| \tilde s^1 \|_1 < n-n_C-1$, proceed to Step 7. Otherwise, proceed to Step 8. 
\item (Checking active subsets) Set $i_k := \{ i : \tilde n_g + 1 \leq i \leq n_g \}$, ordered in increasing order, and define those indices of $i_k$ for which individual constraints have been fathomed as $i_F := \{ i : \exists j : F_{j.} = [{\bf 0}_{1 \times (i - 1)} \;\; 1 $  $ {\bf 0}_{1 \times (n_g - i)} ]$, $i \geq \tilde n_g + 1 \}$. Set $i_k := i_k \setminus i_F$. Remove the last $n - n_C - \| \tilde s^1 \|_1 - 1$ elements of $i_k$ to avoid exploring those branches that terminate without being able to reach full cardinality, i.e., $n - n_C$ members. Set $k$ equal to the first element of $i_k$, then:
\begin{enumerate}
\item (Choose subset) Define the candidate subset, $\tilde s$, as $\tilde s^1$ with the $k^{\rm th}$ element set to 1. Define the corresponding index set as $i_{\tilde A} = \{ i : \tilde s_i = 1 \}$.
\item (Check if subset is spanned by fathoming basis) If $\exists i : F_{i.} \subseteq_B \tilde s$, then $i_{\tilde A} \not \subset i_{A^*}$ and may be fathomed. Proceed to Step 7g.
\item (Check if subset belongs to validation basis) If $\exists i : \tilde s \subseteq_B V_{i.}$, then Problem (\ref{eq:feascheck}) must be feasible for $i_{\tilde A}$ and may be skipped. Its corresponding lowest cost value may be set to the best available lower bound on the cost. Expand the tree by setting

\vspace{-2mm}
$$
S := \left[ \begin{array}{c}
S \\ \tilde s
\end{array} \right], \;\;\; s_{low} := \left[ \begin{array}{c}
s_{low} \\ c^T x_{low}
\end{array} \right]
$$

\noindent and proceed to Step 7g.
\item (Check validity of subset by solving (\ref{eq:feascheck})) Solve (\ref{eq:feascheck}) and augment the counter as $N := N + 1$. If infeasible, then $i_{\tilde A} \not \subset i_{A^*}$ and may be fathomed. Since it is not spanned by the fathoming basis already (or it would have been removed in Step 7b), add it to the fathoming basis by setting

\vspace{-2mm}
$$
F := \left[ \begin{array}{c}
F \\ \tilde s
\end{array} \right]
$$

\noindent and proceed to Step 7g. If (\ref{eq:feascheck}) is feasible, expand the tree by setting

\vspace{-2mm}
$$
S := \left[ \begin{array}{c}
S \\ \tilde s
\end{array} \right], \;\;\; s_{low} := \left[ \begin{array}{c}
s_{low} \\ c^T \tilde x^*
\end{array} \right].
$$

\item (Update the validation basis) Set $i_V := \{ i : g_i (\tilde x^*) \geq 0 \}$ as the index set of active constraints at $\tilde x^*$, and let $v_c$ denote its corresponding binary (row) vector, so that $v_{c,i} = 1, \; \forall i \in i_V$. Remove any rows $i$ from $V$ for which $V_{i.} \subseteq_B v_c$, together with the corresponding rows from $X_V$, as these are now redundant. Update $V$ and $X_V$ as

\vspace{-2mm}
$$
V := \left[ \begin{array}{c}
V \\ v_c
\end{array} \right], \;\;\; X_V := \left[ \begin{array}{c}
X_V \\ (\tilde x^*)^T
\end{array} \right].
$$

\item (Local minimization and domain reduction) If $g_i (\tilde x^*) \leq 0, \; \forall i = 1,...,n_g$ or if $N > 50$, run Subroutine D ($N := N+1$) with $\tilde x^*$ as the initial point in the former case and no initial point in the latter (in this case, reset $N := 0$). If $x_{up}$ changes, follow with Subroutines C ($N := N + N_C$), A, and B, and then repeat the procedure of Step 5. For any rows $i$ of $S$ where $s_{low,i} > c^T x_{up}$, transfer the corresponding rows $S_{i.}$ to $F$ and delete these elements from $s_{low}$, as none of these subsets can contribute to defining $x^*$ since the lowest cost value they can achieve is superior to $c^T x^*$. Find any indices $i : (X_{V,i.})^T \not \in \mathcal{C}$ and remove these rows from $V$ and from $X_V$, as these entries of the validation basis are no longer valid for the updated $\mathcal{C}$.
\item (Proceed to next branch) If $k$ is the last element of $i_k$, return to Step 6. Otherwise, set $k$ as equal to the next element of $i_k$ and return to 7a.
\end{enumerate}

\item (Active set enumeration) Set $i_k := \{ i : \tilde n_g + 1 \leq i \leq n_g \}$, define $i_F$ as in Step 7, and set $i_k := i_k \setminus i_F$. Set $k$ equal to the first element of $i_k$, then: 
\begin{enumerate}
\item (Choose active set candidate) Define the candidate active set, $\tilde s$, as $\tilde s^1$ with the $k^{\rm th}$ index set to 1. Define the corresponding index set as $i_{A} = \{ i : \tilde s_i = 1 \}$.
\item (Check if set is spanned by fathoming basis) If $\exists i : F_{i.} \subseteq_B \tilde s$, then $i_A \neq i_{A^*}$ and may be ignored. Proceed to Step 8d.
\item (Compute global optimum candidate) Solve (\ref{eq:RCPprobrevgen}), denoting the solution by $x_{cand}^*$. If $g_i(x_{cand}^*) \leq \epsilon_g, \; \forall i = 1,...,n_g$, then append $x_{cand}^*$ to the solution set

\vspace{-2mm}
$$
X^* := \left[ \begin{array}{c}
X^* \\ (x_{cand}^*)^T
\end{array} \right].
$$

\item (Proceed to next active set candidate) If $k$ is the last element of $i_k$, return to Step 6. Otherwise, set $k$ as equal to the next element of $i_k$ and return to 8a.
\end{enumerate}
\end{enumerate}

\end{algorithm}

Some remarks:

\begin{itemize}
\item There are three ways for Algorithm \ref{algo:RCPfull} to terminate. Criteria I and II will be defined as termination due to a sufficiently tight $\mathcal{C}$, which yields a relaxed solution that either, in the case of I, satisfies the concave constraints with an acceptable tolerance $\epsilon_g$ or, in the case of II, yields a lower bound on the cost that is sufficiently close to the value at a local minimum that has already been found. Criterion III indicates that the full enumeration has been carried out, in which case the full set of candidates $X^*$ is reported -- the member(s) with the lowest cost value corresponding to the global minimum (minima). If $X^*$ is empty, then this implies that the RCP problem is infeasible.
\item Note that Termination Criteria I and II do not require the RCP problem to be regular, as both declare a solution by more traditional means. Regularity is required for Termination Criterion III to be valid, however. Also note that Criterion III will yield \emph{all} global minima in the case that multiple minima exist, while I and II may terminate as soon as just one of these is found and proven to be globally optimal within a certain tolerance.
\item Some care should be taken with respect to the numerical tolerances of the optimization problems and subroutines involved in the algorithm, as failing to do so may lead to a nonrobust implementation with some feasible solutions being fathomed due to slight numerical infeasibility. As just one example, consider the case of a local solver finding the globally minimal cost (in Subroutine D) lowered by a numerical error of $-10^{-4}$, and thereby reporting $c^T x_{up} = c^T x^* - 10^{-4}$. If this is then incorporated into $\mathcal{C}$ as a cutting plane constraint on the cost and is used by a different solver to solve the domain reduction and relaxed problems, it may be that the latter cannot find a feasible solution as the reported upper bound is slightly below what is feasible. Details regarding where all such tolerances should be accounted for would result in a lengthy discussion, which the reader is spared, but it is worth noting that they are quite important nevertheless.
\item The counter $N$ adds a heuristic rule by which the RCP solver decides to ``take a break'' from the enumeration to perform a local minimization and hopefully find a new local optimum with which to refine $\mathcal{C}$. This is the only non-deterministic feature of the algorithm, since the initial starting point for the local minimization will be randomly generated.
\item Algorithm \ref{algo:RCPfull} builds the active-set tree (e.g., Fig. \ref{fig:branchtree}) dimension by dimension, which results in Step 7 being exhausted before Step 8 is reached, with the latter corresponding to the solution of the reverse problems (\ref{eq:RCPprobrevgen}) for any active sets that have not been fathomed. Since the validation basis $V$ is no longer needed in Step 8, it is no longer updated or used there.
\item The choice to order the elements of $i_k$ in increasing order is not mandatory, and other choices could be proposed. Essentially, this affects how the constraints are ordered when growing the branches, and is likely to affect performance. It is difficult to say if this choice could be optimized, although strategies that are analogous to those used in the more standard branch-and-bound schemes \cite{Neumaier:04,Tawarmalani:04,Achterberg:05} could be proposed.
\end{itemize}

\section{Illustrative Examples}
\label{sec:ex}

Five NLP examples are chosen to illustrate the strengths and weaknesses of the proposed algorithm. Of these, the first two are problems that are already in standard RCP form and do not need approximation, the third is a concave minimization problem that is put into the standard form via the epigraph transformation, and the last two are NLP problems that are approximated and solved as RCPs. The size of the problems ranges from $n = 2$ to $n = 200$, although for the most part the problems are small and intended only to illustrate the viability of the proposed method.

The algorithm and all subroutines were coded in MATLAB$\textsuperscript{\textregistered}$, with the CVX-SeDuMi modeling-solver combination \cite{Sturm:99,Grant:08,CVX:12} used for solving all of the convex subproblems. All local (nonconvex) minimizations were done with the MATLAB routine \texttt {fmincon}. For all optimizations, it was verified that the solution converged to a local minimum using the built-in verification mechanisms of each solver. Since the dominant computational effort of the framework lies in the number of optimization problems solved by Algorithm \ref{algo:RCPfull}, the computational effort for each example is reported in terms of the number of times that each type of optimizer is called, with the following three types being relevant:

\begin{itemize}
\item ``Convex'': Problems (\ref{eq:feascheck}) and (\ref{eq:RCPprobrevgen}), which are general convex NLPs. With the exception of Example 2, these are always quadratically constrained problems with a linear cost.
\item ``LP'': Given as two numbers, $N_1 + N_2$, with $N_1$ denoting the LP problems (\ref{eq:minx}) and (\ref{eq:maxx}) solved during domain reduction and $N_2$ denoting the LP relaxation (\ref{eq:cvxrelax}) solved over $\mathcal{C}$.
\item ``Local'': Problem (\ref{eq:RCPprob}) solved to local optimality.
\end{itemize}

\noindent The tolerances for the algorithm were set as $\epsilon := 10^{-3}, \epsilon_g := 10^{-6}, \epsilon_X := 10^{-4}$, with the values $M := 100$ and $U := 10^6$ used for Subroutines A and D, respectively. 

The RCP regularity of each problem is verified by confirming that $x^*$ must be a strict local minimum. Without going into the details of each example, the general procedure that is applicable to all of the problems considered here is summarized as follows:

\begin{enumerate}
\item By contradiction, it is supposed that $x^*$ is not a strict local minimum and that there exists a feasible direction in the null space of $\left[ \begin{array}{c} c^T \\ C \end{array} \right]$.
\item Analyzing all such directions shows that the optimal active set indexed by $i_{A^*}$ cannot remain active in these directions due to the negative definiteness (strict concavity) of some of its elements.
\end{enumerate}

\noindent The interested reader is referred to an earlier draft of this paper for the proofs for each problem individually \cite[\S 5]{Bunin:2013}.

\vspace{2mm}
\noindent {\it Example 2 (A low-dimensional problem with concave constraints)}
\vspace{2mm}

The following problem is solved:

\vspace{-2mm}
\begin{equation}\label{eq:example1}
\begin{array}{rl}
\mathop {\rm minimize}\limits_{x_1, x_2} & c^T x \\
{\rm{subject}}\;{\rm{to}} & -2.42 (x_1 + 0.4)^2 + 1.1 x_1 + x_2 - 0.235 \leq 0 \\
& -1.1x_1^2 + 1.3x_1 - x_2 - 0.17 \leq 0 \\
& -e^{-5x_1 + 4} - x_2 + 1.2 \leq 0 \\
& -(x_1-0.5)^2 - (x_2 - 0.5)^2 + 0.09 \leq 0 \\
& -22(x_1 - 0.3)^2 + 1.1x_1 + x_2 - 1.155 \leq 0 \\
& -2.2(x_1 - 0.5)^2 + 1.1x_1 + x_2 - 1.475 \leq 0 \\
& -20(x_1 - 0.1)^2 + 1.3x_1 - x_2 + 0.5 \leq 0 \\
& -x_i \leq 0, \; x_i -1 \leq 0, \; i = 1,2,
\end{array}
\end{equation}

\noindent which is already in standard RCP form and has, as one of its major characteristics, a disconnected feasible region (Fig. \ref{fig:ex1}).

\begin{figure*}
\begin{center}
\includegraphics[width=0.75\textwidth]{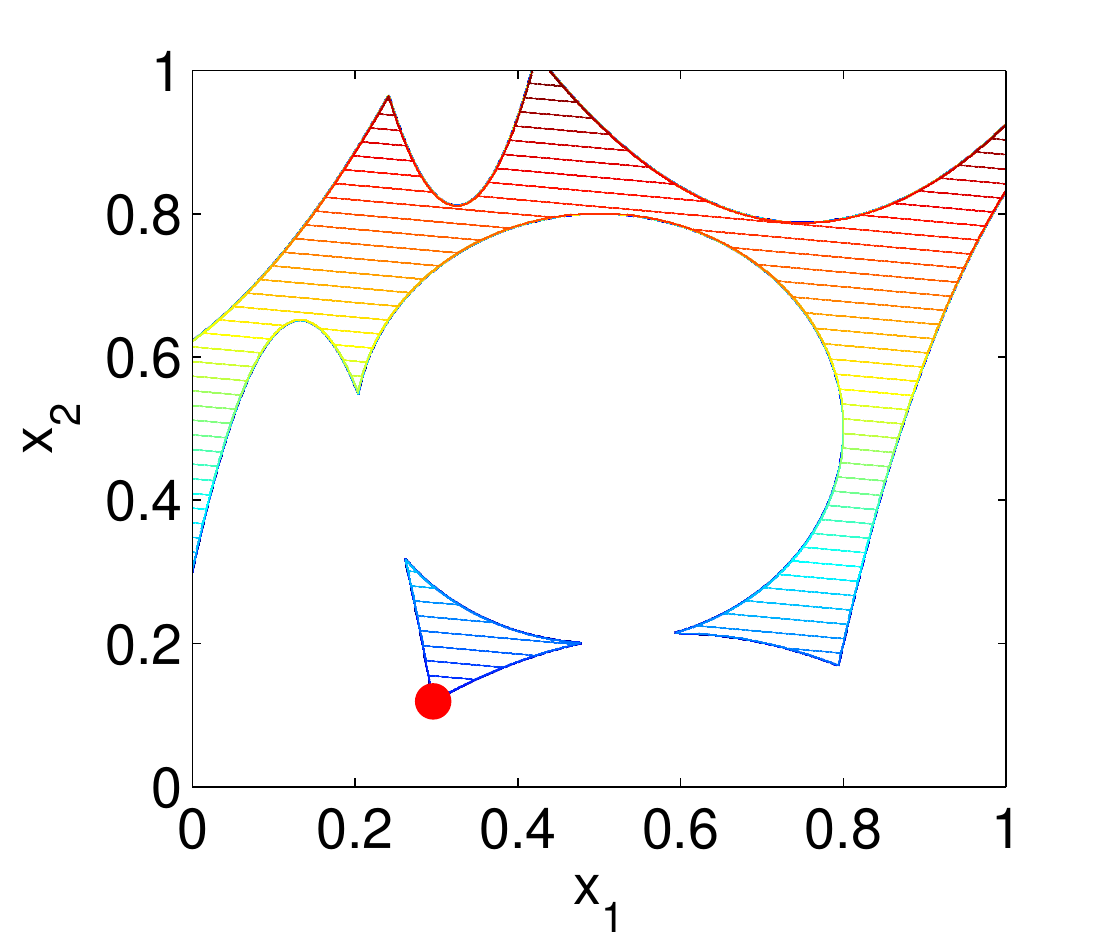}
\caption{The feasible region (lined) of (\ref{eq:example1}) with the cost contours and global minimum given for the case of $c_1 = 0.1, \;c_2 = 1.0$.}
\label{fig:ex1}
\end{center}
\end{figure*}

Computational results for ten randomly generated $c$ are reported in Table \ref{tab:ex1}, and it is seen that the computational burden for this problem is quite light. It is worth noting that the brute enumeration approach would require solving $\binom{11}{2} = 55$ convex problems to arrive at the solution, which, though probably acceptable, still requires significantly more computation than Algorithm \ref{algo:RCPfull}. Finally, one sees that in over half of the cases, domain reduction finds the solution before the enumeration begins.

\begin{table}
\begin{center}
\caption{Computational effort for Example 2.}
\label{tab:ex1}       
\begin{tabular}{rrllll}
\hline\noalign{\smallskip}
$c_1$ & $c_2$ & Convex & LP & Local & Termination  \\
\noalign{\smallskip}\hline\noalign{\smallskip}
0.1 & 1.0  & 3 & 7 + 2 & 2 & I \\
1.0 & 0.4 & 0 & 4 + 1 & 1 & I \\
$-0.2$ & 0.7 & 0 & 6 + 1 & 1 & I \\
$-0.1$ & 0.1 & 0 & 5 + 1 & 1 & I \\
$-0.6$ & 2.2 & 0 & 8 + 1 & 1 & I \\
0.7 & 1.6 & 4 & 8 + 2 & 2 & I \\
0.6 & $-0.6$ & 13 & 4 + 1 & 1 & III \\
1.1 & 0.1 & 0 & 5 + 1 & 1 & I \\
$-0.1$ & $-0.8$ & 0 & 4 + 1 & 1 & I \\
0.3 & $-1.3$ & 6 & 4 + 1 & 1 & III \\
\noalign{\smallskip}\hline
\end{tabular}
\end{center}
\end{table}

\vspace{2mm}
\noindent {\it Example 3 (High-dimensional RCP problems with favorable complexity)}
\vspace{1mm}

Consider the RCP problem

\vspace{-2mm}
\begin{equation}\label{eq:exRCPsup}
\begin{array}{rl}
\mathop {\rm minimize}\limits_{x} & \displaystyle \sum \limits_{i = 1}^{n} x_i \\
{\rm{subject}}\;{\rm{to}} & 1 - \displaystyle \sum \limits_{i = 1}^{n} w_i x^2_i \leq 0\\
& -x_i \leq 0, \;\; i = 1,...,n,
\end{array}
\end{equation}

\noindent with $w \in \mathbb{R}^{n}_{++}$ a random vector with $\| w \|_\infty \leq 1$. A two-dimensional cut of this problem is shown in Fig. \ref{fig:goodex}, from which it is easily seen that the difficulty arises from the ellipse centered at the origin, generated by the single strictly concave constraint. This is, however, an example of an RCP with favorable complexity, as the number of active sets, without any fathoming, is equal to $\binom{n+1}{n} = n+1$ and scales linearly in $n$. As such, one could always solve this problem by solving $n$ convex optimization problems (the active set corresponding to $x = {\bf 0}$ may be fathomed as the solution for this set is clearly infeasible). In fact, one could do even better and apply RCP theory directly, from which it is known that the global optimum must lie at a point where $n-1$ of the variables are 0, thus leading to the analytical solution

\vspace{-2mm}
$$
x_i^* = \left\{ \begin{array}{rl} \displaystyle \sqrt{\frac{1}{w_i}}, & \;\;\; \displaystyle w_i = \| w \|_\infty \vspace{1mm} \\ 0, & \;\;\; \rm{otherwise} \end{array} \right .
$$

\noindent for the non-pathological case where only one element of $w$ is equal to $\| w \|_\infty$.

\begin{figure*}
\begin{center}
\includegraphics[width=0.5\textwidth]{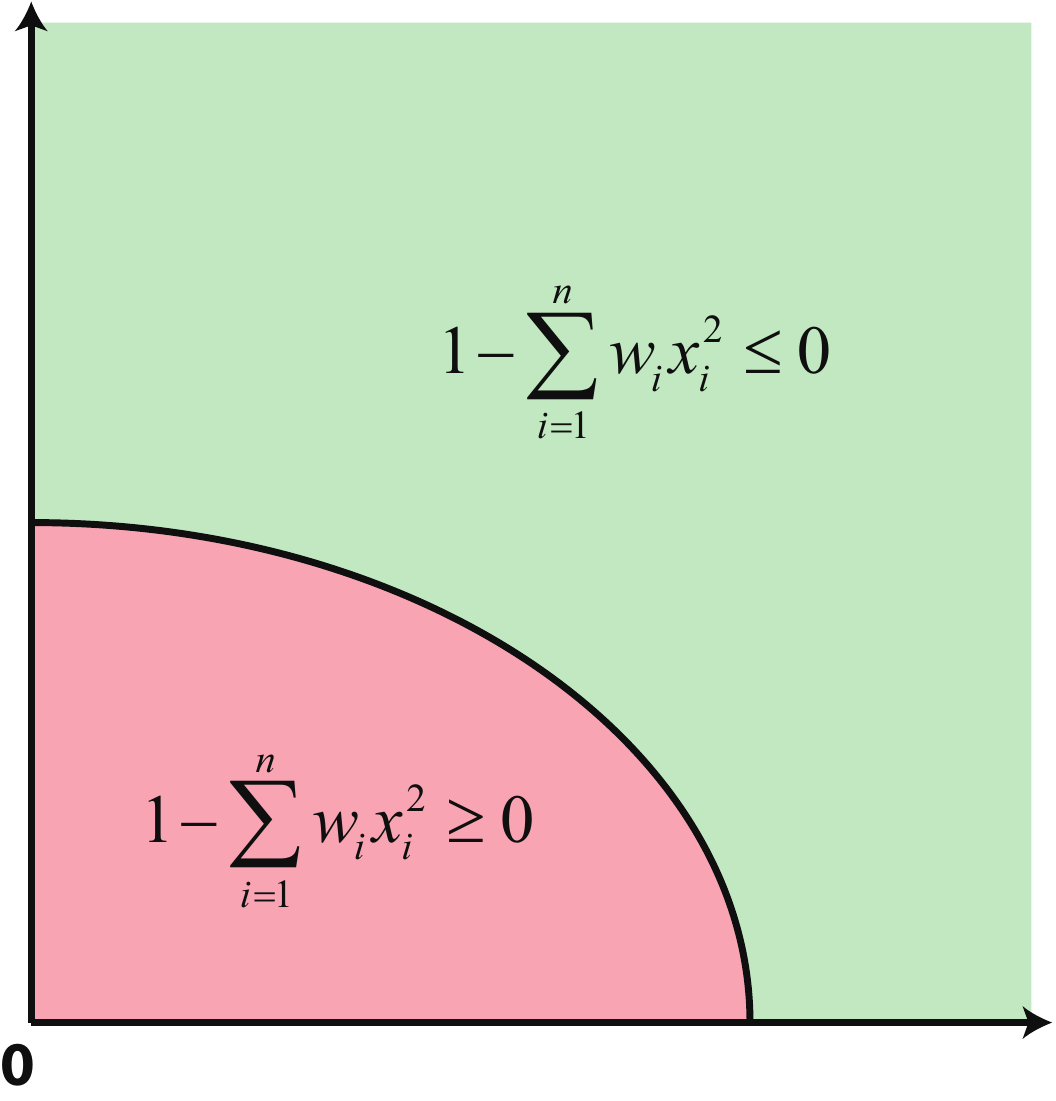}
\caption{A two-dimensional cut of the decision-variable space from the example in (\ref{eq:exRCPsup}), where the strict positivity of the cost vector coefficients ensures that the solution always lie on the intersection of the strictly concave constraint and $n-1$ of the bound constraints.}
\label{fig:goodex}
\end{center}
\end{figure*}

To test how the proposed method solves the problem, Algorithm \ref{algo:RCPfull} is run for various dimension sizes $n$. Because the algorithm requires upper bounds on the variables as well, an additional set of dummy constraints, $x_i - 100 \leq 0, \;i = 1,...,n$, is provided, although these are fathomed  during initialization. Table \ref{tab:ex2} reports the results, where it is seen that the domain reduction techniques alone are able to solve the problem in all of the cases except for $n = 20$ and $n = 200$. For the most part, the number of LP problems that are solved seems to scale well with $n$, with the problems corresponding to $n=120$ and $n = 180$ being notable exceptions. For all of the examples considered, the total number of convex problems solved, excluding the LP and Local problems, is always inferior to the quantity that would be required by the brute enumeration ($n$).

\begin{table}
\begin{center}
\caption{Computational effort for Example 3.}
\label{tab:ex2}       
\begin{tabular}{lllll}
\hline\noalign{\smallskip}
$n$ & Convex & LP & Local & Termination  \\
\noalign{\smallskip}\hline\noalign{\smallskip}
20 & 20 & 118 + 2 & 2 & III \\
40 & 1 & 299 + 2 & 2 & I \\
60 & 1 & 441 + 2 & 2 & I \\
80 & 0 & 433 + 1 & 1 & I \\
100 & 1 & 589 + 2 & 2 & I \\
120 & 43 & 1834 + 4 & 4 & I \\
140 & 0 & 574 + 1 & 1 & I \\
160 & 0 & 787 + 1 & 1 & I \\
180 & 3 & 3206 + 4 & 4 & I \\
200 & 159 & 2523 + 5 & 5 & III \\
\noalign{\smallskip}\hline
\end{tabular}
\end{center}
\end{table}

\vspace{2mm}
\noindent {\it Example 4 (Concave minimization)}
\vspace{1mm}

The following problem is solved:

\vspace{-2mm}
$$
\begin{array}{rl}
\mathop {\rm minimize}\limits_{y} & -50 y^T y + \alpha c_y^T y \\
{\rm{subject}}\;{\rm{to}} & Ay \leq b \\
& y_i \in [0,1],\;\; i = 1,...,10,
\end{array}
$$

\noindent where $A$ and $b$ are defined as

\vspace{-2mm}
$$
A = \left[ {\begin{array}{*{20}r}
   2 & { - 6} & { - 1} & 0 & { - 3} & { - 3} & { - 2} & { - 6} & { - 2} & { - 2}  \\
   6 & { - 5} & 8 & { - 3} & 0 & 1 & 3 & 8 & 9 & { - 3}  \\
   { - 5} & 6 & 5 & 3 & 8 & { - 8} & 9 & 2 & 0 & { - 9}  \\
   9 & 5 & 0 & { - 9} & 1 & { - 8} & 3 & { - 9} & { - 9} & { - 3}  \\
   { - 8} & 7 & { - 4} & { - 5} & { - 9} & 1 & { - 7} & { - 1} & 3 & { - 2}  \\
\end{array}} \right],\;\;b = \left[ {\begin{array}{*{20}r}
   { - 4}  \\
   {22}  \\
   { - 6}  \\
   { - 23}  \\
   { - 12}  \\
\end{array}} \right],
$$

\noindent and where $c_y = [48 \;\; 42\;\;48\;\;45\;\;44\;\;41\;\;47\;\;42\;\;45\;\;46]^T$. The scalar $\alpha$ is varied for test purposes, with $\alpha := 1$ corresponding to Test Problem 2.6 from \cite{Floudas:99}.

This problem is easily converted into standard RCP form by applying the epigraph transformation to the nonlinear portion of the cost function:

\vspace{-2mm}
$$
\begin{array}{rl}
\mathop {\rm minimize}\limits_{x} & \left[  \begin{array}{c} \alpha c_y \\ 1   \end{array}  \right]^T  x \\
{\rm{subject}}\;{\rm{to}} & -50x^T \left[ \begin{array}{cc} I_{10} & \\ & 0  \end{array}  \right] x - x_{11} \leq 0 \\
 & [ A \; {\bf 0}_{5 \times 1} ] x - b \leq 0 \\
 & -x_i \leq 0, \; x_i - 1 \leq 0, \; i = 1,...,10 \\
 & -x_{11} -500 \leq 0, \; x_{11} \leq 0.
\end{array}
$$

\noindent The bounds on the auxiliary variable $x_{11}$ correspond to a conservative approximation of the minimum and maximum values of $-50y^T y$ over $\mathcal{Y}$. It is clear that the new constraint is mandatory and must belong to any active set that is considered.

Table \ref{tab:ex3} presents the computational results for different values of $\alpha$. With the exception of the problems corresponding to $\alpha := 0.1$ and $\alpha := 1$, it is seen that domain reduction is once again very effective here, with a single initial local minimization sufficient to allow for the scheme to reduce the domain and find $x^*$ by solving the relaxed problem. For the two problems where this does not occur and where more computation is needed, it is not entirely inappropriate to blame the issues on ``bad luck'' -- the subroutines are simply unable to find useful local minima with which to reduce the domain. This is particularly clear for $\alpha := 1$, where one sees that the majority of the effort goes into solving the convex problems and not reducing the domain (in a ratio of $2413 : 50$), in spite of a fairly large number of local minimizations performed (i.e., 28). As a result, the algorithm goes through the full enumeration and terminates by Criterion III. While solving 2413 convex problems is probably not desirable, the algorithm is nevertheless orders of magnitude more efficient than the brute enumeration, which would require solving $\binom{27}{10} = 8436285$ convex problems if it is taken into account that the first constraint is mandatory.

\begin{table}
\begin{center}
\caption{Computational effort for Example 4.}
\label{tab:ex3}       
\begin{tabular}{rllll}
\hline\noalign{\smallskip}
$\alpha$ & Convex & LP & Local & Termination  \\
\noalign{\smallskip}\hline\noalign{\smallskip}
$-10$ & 0 & 63 + 1 & 1 & I \\
$-1$ & 0 & 73 + 1 & 1 & I \\
$-0.1$ & 0 & 66 + 1 & 1 & I \\
$0$ & 0 & 68 + 1 & 1 & I \\
$0.1$ & 252 & 142 + 3 & 6 & I \\
$1$ & 2413 & 50 + 12 & 28 & III \\
$10$ & 0 & 49 + 1 & 1 & I \\
\noalign{\smallskip}\hline
\end{tabular}
\end{center}
\end{table}

\vspace{3mm}
\noindent {\it Example 5 (Extended RCP with lower and upper bounds on the optimal cost)}
\vspace{1mm}

The problem of Al-Khayyal and Falk \cite{AlKhayyal:83} is approximated by an RCP problem:

$$
\begin{array}{rl}
\mathop {\rm minimize}\limits_{y} & -y_1 + y_1 y_2 - y_2 \\
{\rm{subject}}\;{\rm{to}} & -6 y_1 + 8 y_2 \leq 3 \\
& 3 y_1 - y_2 \leq 3 \\
& y_1, y_2 \in [0, 5]
\end{array} \;\;\; \approx \;\;\;
\begin{array}{rl}
\mathop {\rm minimize}\limits_{x} & -x_1 - x_2 + 0.5 x_3 \\
{\rm{subject}}\;{\rm{to}} & p_i (x_4) - x_1^2 - x_2^2 - x_3 \leq 0, \;\;i = 1,...,n_p \\
& -6x_1 + 8x_2 - 3 \leq 0 \\
& 3x_1 - x_2 - 3 \leq 0 \\
& -x_i \leq 0, \; x_i - 5 \leq 0, \; i = 1,2 \\
& -x_3 + \underline \epsilon_p \leq 0, \; x_3 - 50 -  \overline \epsilon_p \leq 0 \\
& -x_4 \leq 0, \; x_4 - 10 \leq 0 \\
 & x_1 + x_2 - x_4 = 0,
\end{array}
$$

\noindent where $p(x_4) \approx x_4^2$ is a piecewise-linear approximation and where $\underline \epsilon_p, \overline \epsilon_p$ denote additional slacks added to the bound constraints on $x_3$ to account for approximation error. Both under/overapproximations are used as they lead to outer and inner RCP approximations of the feasible set, and thus allow to both lower and upper bound the cost of the original problem. In this manner, one may refine the quality of the approximation until the gap between the two becomes sufficiently small.

Noting that the global minimum of this problem lies at $(1.1667, 0.5000)$ with a cost value of $-1.0833$, the problem is solved for approximations with increasing $n_p$ values. The results, given in Table \ref{tab:ex4}, show a mild increase in computational effort as $n_p$ increases, as domain reduction techniques are able to find the solution in many cases without requiring full enumeration. Depending on the user's requirements, the procedure of increasing $n_p$ could be brought to an end once the lower and upper bounds grow sufficiently close -- for $n_p = 200$, one sees that the gap is in the fourth digit, for example, which may be sufficiently accurate. It is also worth noting that the upper bounds provided by the overapproximate RCP solution can be further tightened by a final local minimization, as the solution point here must be a feasible point for the original problem and therefore can only be improved upon by any local descent method.

\begin{table}
\begin{center}
\caption{Computational effort for Example 5. Here, $(-)/(+)$ denote under/overapproximations, respectively.}
\label{tab:ex4}       
\begin{tabular}{lllllll}
\hline\noalign{\smallskip}
$n_p$ & Convex & LP & Local & Termination & $(x^*_1,\; x_2^*)$ & $c^T x^*$  \\
\noalign{\smallskip}\hline\noalign{\smallskip}
$3-$ & 0 & 23 + 1 & 1 & I & (1.4819,\;1.4457) & $-2.9326$ \\
$3+$ & 0 & 13 + 1 & 1 & I & (1.5000,\;1.5000) & $-0.2500$ \\
$5-$ & 0 & 13 + 1 & 1 & I & (1.1282,\;0.3846) & $-1.5180$ \\
$5+$ & 17 & 15 + 1 & 1 & III & (1.2350,\;0.7050) & $-1.0694$ \\
$10-$ & 0 & 17 + 1 & 1 & I & (1.1500,\;0.4500) & $-1.2431$ \\
$10+$ & 0 & 17 + 1 & 1 & I & (1.2350,\;0.7050) & $-1.0693$ \\
$15-$ & 0 & 27 + 1 & 1 & I & (1.1804,\;0.5411) & $-1.1491$ \\
$15+$ & 0 & 30 + 1 & 1 & I & (1.2350,\;0.7050) & $-1.0694$ \\
$20-$ & 0 & 42 + 1 & 1 & I & (1.1948,\;0.5843) & $-1.1170$ \\
$20+$ & 0 & 33 + 1 & 1 & I & (1.1075,\;0.3225) & $-1.0728$ \\
$30-$ & 0 & 56 + 1 & 1 & I & (1.2086,\;0.6258) & $-1.0936$ \\
$30+$ & 0 & 54 + 1 & 1 & I & (1.1500,\;0.4500) & $-1.0825$ \\
$50-$ & 3 & 117 + 2 & 2 & I & (1.1674,\;0.5021) & $-1.0888$ \\
$50+$ & 3 & 110 + 2 & 2 & I & (1.1839,\;0.5518) & $-1.0825$ \\
$100-$ & 7 & 173 + 2 & 2 & I & (1.1756,\;0.5269) & $-1.0844$ \\
$100+$ & 29 & 127 + 2 & 2 & III & (1.1585,\;0.4755) & $-1.0831$ \\
$200-$ & 203 & 208 + 3 & 5 & II & (1.1652,\;0.4956) & $-1.0835$ \\
$200+$ & 6 & 189 + 2 & 2 & II & (1.1586,\;0.4757) & $-1.0831$ \\
\noalign{\smallskip}\hline
\end{tabular}
\end{center}
\end{table}

\vspace{3mm}
\noindent {\it Example 6 (Nonlinear equality constraints and multiple global minima)}
\vspace{1mm}

Problem (\ref{eq:sinaff}) is solved by solving its RCP approximation (\ref{eq:sinaffRCP}), with the bound constraints $-3.5 \leq x_3 \leq 3.5$, $-1 \leq x_4 \leq 1$, $-3.5 \leq x_5 \leq 3.5$, and $-4.5 \leq x_6 \leq 4.5$ added to ensure the boundedness of $\mathcal{X}$. Because of the presence of the nonlinear equality constraint, the approximations used are necessarily underapproximations so as to avoid $\mathcal{X} = \varnothing$, which implies that solving the RCP problem can only provide a lower bound on the optimal cost function value for the original. For this particular problem, however, an upper bound may nevertheless be obtained by taking the solution of the RCP and using it as a starting point for a local minimization of the original problem. Like in the previous example, it is clear that finer and finer approximations may be used until the gap between the lower and upper bounds is sufficiently small -- for simplicity, the number of approximation pieces for each function is made the same, with $n_a = n_b = n_c = n_p$. Algorithm~\ref{algo:pwapp} is used to obtain the approximations $p_b$ and $p_c$, while a piecewise-linear approximation is used for $p_a$. As some numerical issues were encountered for this particular problem, the value of $\epsilon_g$ in Step 8c of Algorithm \ref{algo:RCPfull} specifically had to be raised from $10^{-6}$ to $5 \cdot 10^{-4}$ to avoid fathoming $i_{A^*}$ during the final step of the enumeration. 

Apart from the ``inconvenience'' of a nonlinear equality constraint, this problem has an additional difficulty in that the objective function exhibits a symmetry (Fig. \ref{fig:ex5}) and has two global minima at $(-1.8601,\;5.0000)$ and $(1.8601,\;-5.0000)$ with a cost value of $-3.2205$. The consequence of this is that domain reduction is unlikely to be as effective as it may be in certain problems, due to the two global minima being dispersed on nearly opposite corners of the original domain and the impossibility of shrinking the domain without fathoming one of these minima. The computational results, given in Table \ref{tab:ex5}, largely confirm these expectations, with domain reduction playing a very minor role in all cases -- this is evident from the relatively small number of LP problems solved and the fact that the algorithm never terminates by Criteria I or II. For each tested value of $n_p$, one notices that the global optima of the RCP approximation are always placed in the corners of the feasible domain, with further refinement not occurring even for $n_p = 30$. While the upper bound obtained by a local optimization following initialization at $\pm (2.0000,\;-5.0000)$ is always that of the global minimum, the refinement on the lower bound from increasing $n_p$ comes fairly slowly, suggesting that smarter, more efficient approximations are needed.

\begin{figure*}
\begin{center}
\includegraphics[width=0.75\textwidth]{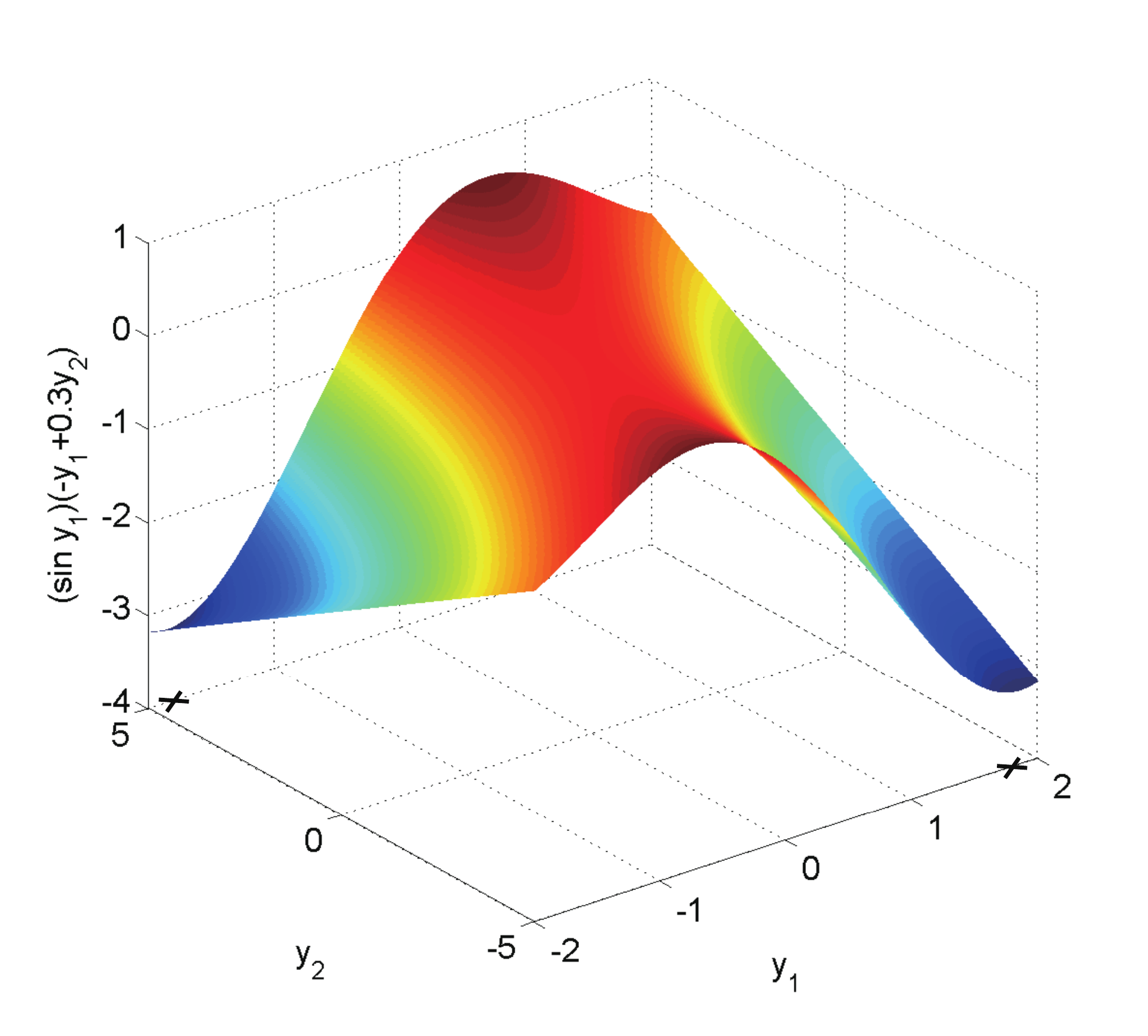}
\caption{The function $(\mathop {\sin} y_1)(-y_1 + 0.3y_2)$, which has two global minima (marked) over $y_1 \in [-2,2], \; y_2 \in [-5,5]$ at  $(-1.8601,\;5.0000)$ and $(1.8601,\;-5.0000)$.}
\label{fig:ex5}
\end{center}
\end{figure*}

\begin{table}
\begin{center}
\caption{Computational effort for Example 6.}
\label{tab:ex5}       
\begin{tabular}{llllllll}
\hline\noalign{\smallskip}
$n_p$ & Convex & LP & Local & Termination & $(x^*_1,\; x^*_2)$ & $c^T x^*$ & Upper Bound \\
\noalign{\smallskip}\hline\noalign{\smallskip}
3 &  98 & 33 + 1 & 2 & III & $\pm (2.0000,\;-5.0000)$ & $-7.7443$ & $-3.2205$  \\
4 &  203 & 15 + 2 & 3 & III & $\pm (2.0000,\;-5.0000)$ & $-5.6296$ & $-3.2205$  \\
5 &  342 & 22 + 2 & 4 & III & $\pm (2.0000,\;-5.0000)$ & $-5.3914$ & $-3.2205$  \\
6 &  508 & 27 + 4 & 6 & III & $\pm (2.0000,\;-5.0000)$ & $-5.3332$ & $-3.2205$  \\
7 &  669 & 24 + 6 & 8 & III & $\pm (2.0000,\;-5.0000)$ & $-4.8414$ & $-3.2205$  \\
8 & 913 & 25 + 5 & 10 & III & $\pm (2.0000,\;-5.0000)$ & $-4.5361$ & $-3.2205$  \\
9 & 1177 & 26 + 8 & 12 & III & $\pm (2.0000,\;-5.0000)$ & $-4.4129$ & $-3.2205$  \\
10 &  1518 & 28 + 10 & 16 & III & $\pm (2.0000,\;-5.0000)$ & $-4.3582$ & $-3.2205$  \\
15 &  3361 & 31 + 16 & 32 & III & $\pm (2.0000,\;-5.0000)$ & $-3.9813$ & $-3.2205$  \\
20 &  6055 & 35 + 23 & 55 & III & $\pm (2.0000,\;-5.0000)$ & $-3.7734$ & $-3.2205$  \\
30 &  13451 & 26 + 33 & 113 & III & $\pm (2.0000,\;-5.0000)$ & $-3.5688$ & $-3.2205$ \\
\noalign{\smallskip}\hline
\end{tabular}
\end{center}
\end{table}

\vspace{2mm}
\noindent {\it General Observations}
\vspace{1mm}

That Algorithm 2 was able to find a global optimum was confirmed for all problems except the $\alpha \neq 1$ cases of Example 4, for which the global optima are neither easy to verify nor are reported in the literature. 

Particularly notable in the observed performance was the role of domain reduction, which was able to find $x^*$ in very many cases without requiring the full enumeration of active sets. This goes to further reinforce the strength and potential of these techniques, which have been an important driving force in the success of branch-and-reduce solvers \cite{Ryoo:96,Neumaier:05}. At the same time, as illustrated in Example 6, these techniques may be of little use when multiple good optima are dispersed in different corners of the feasible space. A potential solution to this could come via ``minimally partitioning'' the feasible domain and solving several RCPs in parallel -- in the case of Example 6, one could certainly envision drastic improvements in performance if the original RCP were split into two along the line $x_1 = 0$, both of which would likely be solved quickly with the help of domain reduction and then simply compared.

With regard to scaling, Example 3 represents an ideal example where the brute enumeration approach would scale linearly in $n$ and where the proposed method appears to do likewise, though the extra effort needed for domain reduction is at times sporadic. It is not yet clear how well the method would scale in general, although the results of Example 4 suggest that the required computational effort may grow significantly for $n = 10$ (and higher) if domain reduction is not effective.

Finally, Examples 5 and 6 demonstrate the viability of applying the extended RCP framework to solve factorable NLP problems to global optimality within a desired tolerance. A notable shortcoming is the apparent lack of a unified method to obtain the upper bound on the optimal cost function value in this case, as two different methods were used for the two problems presented here. For the general problem, it may be that one can neither solve the overapproximate RCP due to feasibility issues ($\mathcal{X} = \varnothing$) nor sample and perform local minimization for the original problem as the $x^*$ of the underapproximation may not yield a feasible point for the original. As such, one can only refine the underapproximation until the obtained $x^*$ becomes sufficiently close to feasible. A smarter way of managing approximations is clearly needed as well.

\section{Concluding Remarks}

The present work has put forth a methodology for solving a large class of factorable NLP problems by solving their RCP approximations. As the latter may be solved by an enumeration procedure, the proposed method offers the possibility of solving any factorable NLP satisfying Assumptions A1 and A2 to a controlled approximation error by enumeration. For certain problems, this may be advantageous since enumerative methods generally scale differently than methods iteratively partitioning the decision-variable space. Because brute enumeration of the possible solutions is usually too computationally demanding, a number of steps was taken to develop an efficient enumeration procedure that considers subsets of the active sets potentially defining the optimum. This procedure was then shown to both solve RCP problems exactly and factorable NLP problems approximately to increasing precision.

Although a basic theoretical treatment of both RCP approximation quality and the solvability of the resulting RCP problems were carried out, there are still improvements to be made with regard to both. For approximations, it may be of interest to consider a different avenue than factoring the NLP problem and then approximating its univariate components, as such approximations may require too many pieces for a sufficiently accurate approximation while also augmenting the decision-variable space. One path of potential interest is the use of multivariate D. C. functions, as it has been proven that an arbitrarily good \emph{multivariate} piecewise-concave approximation of such functions exists and may be obtained by a very cheap computational procedure \cite{Bunin:2014}.

With regard to RCP solvability, the method proposed here remains heuristic unless RCP regularity can be proven. While strict optimality of a global minimum can be fairly straightforward to prove for some problems -- such as those used in the examples -- it is not so for the general case. Furthermore, even if strict optimality were proven, one still requires the additional LICQ assumption, which cannot be proven to hold for most problems. Overcoming these challenges would provide much comfort with regard to the reliability of the method. 

On the algorithmic end, the application of the proposed method to the chosen examples represents a promising start. However, there is clearly much that needs to be done to make the method competitive, as problems where domain reduction is not successful, and/or where the number of constraints or approximation pieces is large, tend to make the method scale poorly due to its enumerative nature. As with any method employing approximations, the idea of homotopy is a natural recourse, and one can envision starting with a very brute RCP approximation with very few constraints and then iteratively refining it in the neighborhood of the $x^*$ found during each iteration. In fact, such methods could also be generalized to the case without approximations, as general concave constraints $g_i (x) \leq 0, \; i = 1,...,n_g$ could always be approximated by their joint versions $\sum_{i=1}^{n_g} g_i (x) \leq 0$, the latter leading to an easier, albeit approximate, RCP problem. The proper management of such refinements represents yet another topic for future research.

\section*{Acknowledgements}

The author would like to extend his most profound thanks to the anonymous reviewer, whose numerous suggestions have greatly contributed to the improvement of the present document.


\begin{thebibliography}{}
%
%
\bibitem{Achterberg:05}
Achterberg, T., Koch, T., Martin, A.: Branching rules revisited. Oper. Res. Lett. 33, 42-54 (2005)
\bibitem{AlKhayyal:83}
Al-Khayyal, F. A., Falk, J. E.: Jointly constrained biconvex programming. Math. Oper. Res. 8(2), 273-286 (1983)
\bibitem{Bunin:2013}
Bunin, G. A.: Extended reverse convex programming: An active-set approach to global optimization. arXiv [math.OC] 1308.2828v1, 1-48 (2013)
\bibitem{Bunin:2014}
Bunin, G. A.: On the piecewise-concave approximations of functions. arXiv [math.OC] 1403.3882, 1-4 (2014)
\bibitem{Caprara:10}
Caprara, A., Locatelli, M.: Global optimization problems and domain reduction strategies. Math. Program. 125(1), 123-137 (2010)
\bibitem{CVX:12}
CVX Research, Inc.: CVX: MATLAB software for disciplined convex programming, version 2.0 beta. http://cvxr.com/cvx (2012). Accessed September 2012
\bibitem{Falk:69}
Falk, J. E., Soland, R. M.: An algorithm for separable nonconvex programming problems. Manag. Sci. 15(9), 550-569 (1969)
\bibitem{Floudas:99}
Floudas, C. A., Pardalos, P. M., Adjiman, C. S., Esposito, W. R., G\" um\" us, Z. H., Harding, S. T., Klepeis, J. L., Meyer, C. A., Schweiger, C. A.: Handbook of Test Problems in Local and Global Optimization. Kluwer Academic Publishers, Dordrecht (1999)
\bibitem{Floudas:05}
Floudas, C. A., Akrotirianakis, I. G., Caratzoulas, S., Meyer, C. A., Kallrath, J.: Global optimization in the 21st century: Advances and challenges. Comput. Chem. Eng. 29, 1185-1202 (2005)
\bibitem{Graham:72}
Graham, R. L.: An efficient algorithm for determining the convex hull of a finite planar set. Inform. Process. Lett. 1, 132-133 (1972)
\bibitem{Grant:08}
Grant, M., Boyd, S.: Graph implementations for nonsmooth convex programs, recent advances in learning and control (a tribute to M. Vidyasagar). In: Blondel, V., Boyd, S., Kimura, H. (eds.) Lecture Notes in Control and Information Sciences, pp. 95-110, Springer (2008)
\bibitem{Hillestad:80}
Hillestad, R. J., Jacobsen, S. E.: Reverse convex programming. Appl. Math. Optim. 6, 63-78 (1980)
\bibitem{Horst:95}
Horst, R., Pardalos, P. M., Thoai, N.: Introduction to Global Optimization. Kluwer Academic Publishers, Dordrecht (1995)
\bibitem{McCormick:76}
McCormick, G. P.: Computability of global solutions to factorable nonconvex programs: Part I - convex underestimating problems. Math. Program. 10, 147-175 (1976)
\bibitem{McKeown:76}
McKeown, P.: Extreme point ranking algorithms: A computational survey. In: White, W. W. (ed.) Computers and Mathematical Programming: Proceedings of the Bicentennial Conference on Mathematical Programming, pp. 216-222. U.S. Government Printing Office, Washington (1976)
\bibitem{Neumaier:04}
Neumaier, A.: Complete search in continuous global optimization and constraint satisfaction. Acta Numer. 13(1), 271-369 (2004)
\bibitem{Neumaier:05}
Neumaier, A., Shcherbina, O., Huyer, W., Vink\' o, T.: A comparison of complete global optimization solvers. Math. Program. 103(2), 335-356 (2005)
\bibitem{Nowak:05}
Nowak, I.: Relaxation and Decomposition Methods for Mixed Integer Nonlinear Programming. Birkh\" auser Verlag, Basel (2005)
\bibitem{Pardalos:86}
Pardalos, P. M., Rosen, J.: Methods for global concave minimization: A bibliographic survey. SIAM Rev. 28(3), 367-379 (1986)
\bibitem{Rozvany:67}
Rozvany, G. I. N.: A new calculus for optimum design. Int. J. Mech. Sci. 9, 885-886 (1967)
\bibitem{Rozvany:70}
Rozvany, G. I. N.: Concave programming in structural optimization. Int. J. Mech. Sci. 12, 131-142 (1970)
\bibitem{Rozvany:71}
Rozvany, G. I. N.: Concave programming and piece-wise linear programming. Int. J. Numer. Methods Eng. 3, 131-144 (1971)
\bibitem{Ryoo:96}
Ryoo, H. S., Sahinidis, N. V.: A branch-and-reduce approach to global optimization. J. Glob. Optim. 8(2), 107-138 (1996)
\bibitem{Sherali:01}
Sherali, H. D., Wang, H.: Global optimization of nonconvex factorable programming problems. Math. Program. 89(3), 459-478 (2001)
\bibitem{Sturm:99}
Sturm, J. F.: Using SeDuMi 1.02, a MATLAB toolbox for optimization over symmetric cones. Optim. Methods Softw. 11(1-4), 625-653 (1999)
\bibitem{Tawarmalani:04}
Tawarmalani, M., Sahinidis, N. V.: Global optimization of mixed-integer nonlinear programs: A theoretical and computational study. Math. Program. 99(3), 563-591 (2004)
\bibitem{Ueing:72}
Ueing, U.: A combinatorial method to compute a global solution of certain non-convex optimization problems. In: Lootsma, F. A. (ed.), Numerical Methods for Non-Linear Optimization, pp. 223-230. Academic Press (1972)
\bibitem{Zamora:99}
Zamora, J. M., Grossmann, I. E.: A branch and contract algorithm for problems with concave univariate, bilinear and linear fractional terms. J. Glob. Optim. 14(3), 217-249 (1999)
\bibitem{Zangwill:67}
Zangwill, W. I.: The piecewise concave function. Manag. Sci. 13(11), 900-912 (1967)

















\end{thebibliography}


\end{document}